\numberwithin{equation}{section}
\numberwithin{figure}{section}
\newtheorem{thm}{Theorem}[section]
\newtheorem{prop}[thm]{Proposition}
\newtheorem{defn}[thm]{Definition}
\newtheorem{lemma}[thm]{Lemma}
\newtheorem{assum}[thm]{Assumption}
\newtheorem{remark}[thm]{Remark}
\newcommand{\1}{\mathbf{1}}
\newcommand{\E}{\mathbb{E}}
\newcommand{\F}{\mathbb{F}}
\renewcommand{\P}{\mathbb{P}}
\newcommand{\R}{\mathbb{R}}
\renewcommand{\S}{\widehat{S}}
\newcommand{\ppp}{\mathbb{P}}
\newcommand{\aaa}{\mathcal{S}}
\newcommand{\cI}{\mathcal{I}}
\newcommand{\X}{\widehat{X}}
\newcommand{\cL}{\mathcal{L}}
\newcommand{\A}{\mathcal{A}}
\title[On the Singular Control of Exchange Rates]{On the Singular Control of Exchange Rates}
\author[Ferrari, Vargiolu]{Giorgio Ferrari, Tiziano Vargiolu}
\keywords{}
\address{G.~Ferrari: Center for Mathematical Economics (IMW), Bielefeld University, Universit\"atsstrasse 25, 33615, Bielefeld, Germany}
\email{\href{mailto:giorgio.ferrari@uni-bielefeld.de}{giorgio.ferrari@uni-bielefeld.de}}
\address{T.~Vargiolu: Dipartimento di Matematica "Tullio Levi-Civita", Universit\`{a} degli Studi di Padova, Via Trieste 63, 35100, Padova, Italy}
\email{\href{mailto:vargiolu@math.unipd.it}{vargiolu@math.unipd.it}}
\date{\today}
\numberwithin{equation}{section}
\begin{document}

\begin{abstract} 
Consider the problem of a central bank that wants to manage the exchange rate between its domestic currency and a foreign one. The central bank can purchase and sell the foreign currency, and each intervention on the exchange market leads to a proportional cost whose instantaneous marginal value depends on the current level of the exchange rate. The central bank aims at minimizing the total expected costs of interventions on the exchange market, plus a total expected holding cost. We formulate this problem as an infinite time-horizon stochastic control problem with controls that have paths which are locally of bounded variation. The exchange rate evolves as a general linearly controlled one-dimensional diffusion, and the two nondecreasing processes giving the minimal decomposition of a bounded-variation control model the cumulative amount of foreign currency that has been purchased and sold by the central bank. We provide a complete solution to this problem by finding the explicit expression of the value function and a complete characterization of the optimal control. At each instant of time, the optimally controlled exchange rate is kept within a band whose size is endogenously determined as part of the solution to the problem. 
We also study the expected exit time from the band, and the sensitivity of the width of the band with respect to the model's parameters in the case when the exchange rate evolves (in absence of any intervention) as an Ornstein-Uhlenbeck process, and the marginal costs of controls are constant. The techniques employed in the paper are those of the theory of singular stochastic control and of one-dimensional diffusions.  
\end{abstract}

\maketitle

\smallskip

{\textbf{Keywords}}: singular stochastic control; exchange rates; target zones; central bank; variational inequality; optimal stopping.

\smallskip

{\textbf{MSC2010 subject classification}}: 93E20, 60J60, 60G40, 91B64, 91G30.
\smallskip

{\textbf{OR/MS subject classification}}: Dynamic programming/optimal control; Probability: stochastic model/applications; Probability: diffusion.

\section{Introduction}

One of the main tool that a central bank has at disposal in order to maintain under control the volatility of the exchange rate is to properly purchase or sale foreign currency reserves. As a result of such interventions on the exchange market, in many cases one can observe that the exchange rate between two currencies is either kept below/above a given level, or it is maintained within announced margins on either side of a given value, the so-called central parity (or central rate). Similar regimes of the exchange rate are usually referred to as \emph{target zones}, and Switzerland, Hong Kong, and Denmark are prominent examples of countries that adopted, or adopt, such a kind of monetary policy. 

On the 6th of September 2011, the Swiss National Bank (SNB) stated in a press release \cite{NYT}: 
\vspace{0.25cm}

\emph{[...] the current massive overvaluation of the Swiss Franc poses an acute threat to the Swiss economy and carries the risk of deflationary development. The Swiss National Bank is therefore aiming for a substantial and sustained weakening of the Swiss Franc. With immediate effect, it will no longer tolerate a EUR/CHF exchange rate below the minimum rate of CHF 1.20. The SNB will enforce this minimum rate with the utmost determination and is prepared to buy foreign currency in unlimited quantities [...]}
\vspace{0.25cm}

\noindent SNB adopted such an aggressive devaluation policy until the 15th of January 2015 \cite{Economist,SNBCHF}, when SNB simply dropped its target zone policy with a very evident effect on the CHF/EUR exchange rate (see Figure \ref{EURCHF}).  

On the other hand, the 12th of January 2017 marked the 30th anniversary of the Danish central parity \cite{Danish}. The decision to pursue a fixed exchange rate policy was made in the 1980s when the Danish economy was in a crisis. Since then the Danish Krone (DKK) was anchored to the German Mark, and then, since 1999, to Euro in such a way that the Krone's central parity has been unchanged since January 12, 1987. The central rate is 7.46038 Krone per Euro, and the Krone is allowed to increase or decrease by 2.25\% (even if the fluctuations have been far smaller for many years, see Figure \ref{EURDKK}).

To end with a non-European example, as a response to the Black Saturday crisis in 1983, on October 17, 1983 the Hong Kong Dollar (HKD) has been pegged to the U.S.\ Dollar (USD), and since then the HKD/USD exchange rate is pegged to a central rate of 7.80 HKD/USD (see Figure \ref{HKDUSD}), with a band of $\pm 0.05$ HKD/USD \cite{Wiki}.

\begin{figure}[h!]
\centering
\includegraphics[scale=0.4]{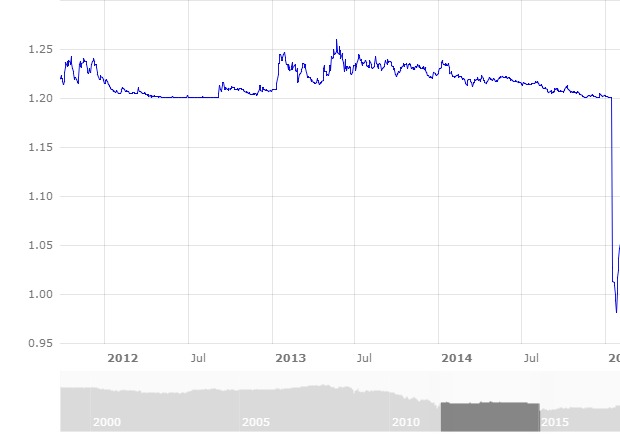}
\caption{Plot EUR/CHF exchange rate from 2011 until 2015.}
\label{EURCHF}
\end{figure}

\begin{figure}[h!]
\centering
\includegraphics[scale=0.4]{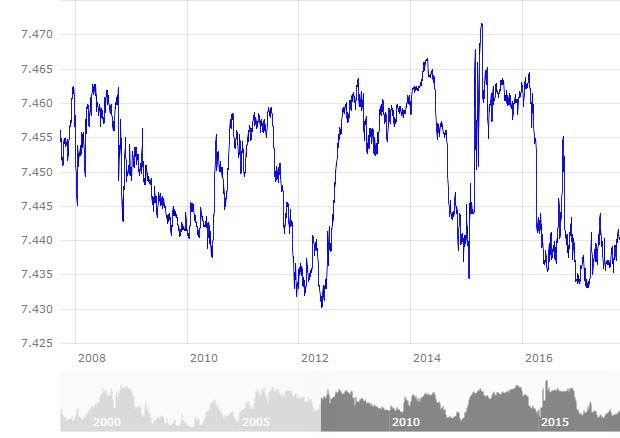}
\caption{Plot EUR/DKK exchange rate from 2008 until 2016.}
\label{EURDKK}
\end{figure}

\begin{figure}[h!]
\centering
\includegraphics[scale=0.6]{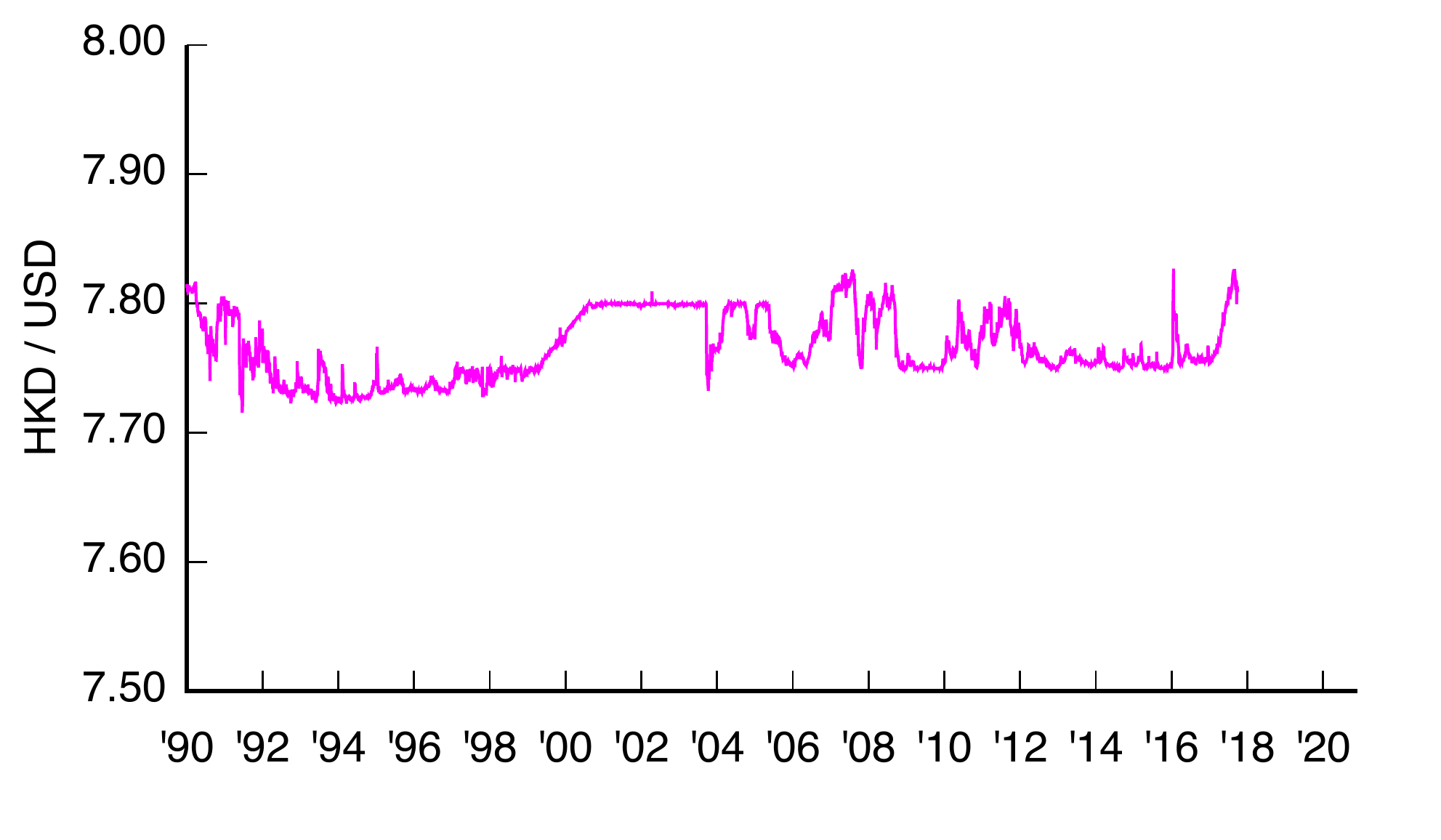}
\caption{Plot HKD/USD exchange rate from 1990 until 2017.}
\label{HKDUSD}
\end{figure}

It is not clear (nor of public knowledge) whether the width of the interval where the exchange rate is allowed to fluctuate is chosen according to some optimality criterion (e.g., maximization of social welfare or minimization of expected costs), or it is decided only on the basis of international and political agreements. In the latest years the economic and mathematical literature experienced an intensive research on target zone models. In particular, within the literature we can identify two main streams of research. On one hand, many papers develop stochastic models aiming at explaining the dynamics of exchange rates within a given target zone (see \cite{Bo, Jong, Krugman, JorMik, Larsen, Yang}, among others). Target zone models have been pioneered in \cite{Krugman} where it is assumed that the ``fundamental" (and not observed) exchange rate is a Brownian motion, which is instantaneously reflected at exogenously given upper and lower barriers: this intrinsically defines a singular stochastic control problem, whose value function is the exchange rate really observed in the market. Although in \cite{Krugman} many mathematical details are missing, in that seminal paper the author finds that the observed exchange rate is mean-reverting inside the given target zone. In the subsequent papers (see e.g.~\cite{Bo, Jong, JorMik, Larsen, Yang} and references therein), the authors assume that exchange rates fluctuate stochastically within an exogenously given interval according to a stochastic differential equation parametrized by a set of free parameters, and possibly satisfying reflecting boundary conditions, or with diffusion coefficient vanishing near the boundaries of the interval. The parameters are then calibrated in such a way that the model can fit the data on exchange rates, e.g.\ in the European monetary system.

On the other hand, several papers in the mathematical literature endogenize the width of the target zone by formulating the exchange rates' optimal management problem as a stochastic optimal control problem (see \cite{BRY, CadZap1, CadZap2, Jeanblanc, Mundaca}, and references therein). In these papers, the central bank aims at adjusting the uncertain level of the exchange rate in order to minimize the spread between the instantaneous level of the exchange rate and a given central parity. To accomplish that, the central bank can purchase or sell foreign currency, but whenever the central bank intervenes, a cost for the intervention must be paid. In those papers such a cost has both a proportional and a fixed component, thus leading to a mathematical formulation of the optimization problem as a two-sided stochastic impulsive control problem (possibly also with classical controls modeling the interventions on the domestic interest rate). It is shown that the optimally controlled exchange rate is kept within endogenously determined levels (the so-called free boundaries) and the interventions are of pure-jump type: at optimal times the exchange rate is pushed from a free boundary to another threshold level, which is also found endogenously as a part of the solution to the problem. In essence, it is optimal to follow a two-sided $(s,S)$-policy.

However, a closer look at the dynamics of the exchange rate EUR/CHF in the period 2011-2015, or at that of the exchange rate HKD/USD since 2008 reveals no jumps, but a \emph{continuous reflection} of the exchange rate at the boundaries of the interval where it is allowed to fluctuate (see Figures \ref{EURCHF} and \ref{HKDUSD}). Such an observation suggests that the optimal management problem of exchange rates might be mathematically better formulated as a singular stochastic control problem, rather than as an impulsive one. Indeed, in singular stochastic control problems the optimal control usually prescribes a continuous reflection of the controlled state variable at endogenously determined level(s) (see, e.g., Chapter VIII in \cite{FlemingSoner} and \cite{Shreve} for an introduction to singular stochastic control).

In this paper we thus introduce an infinite time-horizon, one-dimensional bounded variation singular stochastic control problem to model the exchange rates' optimal management problem. In our model, the (logarithm of the) exchange rate is a one-dimensional It\^o-diffusion satisfying a linearly controlled stochastic differential equation with suitable drift and volatility coefficients. Such general dynamics allows us to cover classical models where the exchange rate evolves as a geometric Brownian motion, as well as more realistic mean-reverting behaviors of the exchange rate's dynamics (see \cite{Sweeney, Tvedt} and references therein). The cumulative amount of purchases and sales of the foreign currency (which are the control variables of the central bank) are monotone processes, adapted to the underlying filtration, and satisfying proper integrability conditions. The central bank aims at choosing a (cumulative) purchasing-selling policy in order to minimize a total expected discounted cost functional. This is given by the sum of total expected holding costs and costs of interventions. The instantaneous holding cost of the exchange rate is measured by a general nonnegative convex function. This generalizes the quadratic cost function usually employed in the literature (cf., e.g., \cite{BRY, CadZap1}). Also, we assume that the instantaneous proportional costs of the interventions on the exchange rate depend on the current level of the exchange rate, and they are sufficiently smooth real-valued functions.

We tackle the problem via a \emph{guess-and-verify approach} by carefully employing the properties of one-dimensional regular diffusions (see, e.g., \cite{BS}), and of their excessive mappings \cite{Alvarez03}. We find that the optimal purchasing-selling policy of the central bank is triggered by two thresholds (free boundaries), which are the unique solution to a system of two coupled nonlinear algebraic equations. The optimal policy prescribes to purchase and sell the minimal amount of foreign currency that allows to keep the exchange rate within the free boundaries. Mathematically, the optimal control is given by the solution to a two-sided Skorokhod reflection problem. 

It is worth noticing that, differently from models involving impulsive controls, where the actual optimality of a candidate value function is usually proved only via numerical methods (see \cite{CadZap1, CadZap2}), here we are able to provide a complete analytical study by finding the explicit expression of the value function and of the optimal control process (up to the solution to the algebraic system for the two free boundaries). Moreover, we can provide a detailed comparative statics analysis of the free boundaries when the (log-)exchange rate (in absence of any intervention) evolves through an Ornstein-Uhlenbeck dynamics. The latter allows us to capture the mean-reverting behavior of exchange rates that has been observed in several empirical studies (see \cite{Sweeney, Tvedt} and references therein). In particular, by assuming that the instantaneous proportional costs of interventions are constant, we show that the more the exchange market is volatile, the more the central bank is reluctant to intervene. Also, we are able to numerically evaluate the expected exit times and exit probabilities from the target zone, and to relate our findings with the monetary policy adopted by the Danish Central Bank since 1987 \cite{Danish}, and by the SNB in the period 2011--2015 \cite{Economist,SNBCHF}.

The contribution of this paper is twofold. On the one hand, we contribute to the literature from the modeling point of view. Indeed, by introducing a singular stochastic control problem to model the exchange rates' optimal management problem faced by a central bank, we are able to mimick the continuous reflection of the exchange rate at the target zone's boundaries which seems to happen in reality (see Figures \ref{EURCHF} and \ref{HKDUSD}). From the mathematical point of view, we contribute by providing the explicit solution to a bounded variation singular stochastic control in a very general setting with state variable evolving as a general one-dimensional diffusion, and with instantaneous marginal costs of control that are state-dependent. To the best of our knowledge, the explicit solution to a similar problem is not available in the literature yet. 

The work that is perhaps closest to ours is \cite{Matomaki}, where a one-dimensional, bounded variation singular stochastic control problem over an infinite time-horizon has been studied. However, one can come across several major differences between our paper and \cite{Matomaki}. First of all, in \cite{Matomaki} the instantaneous marginal costs of control are constant. Second of all, in \cite{Matomaki} the state dynamics (in the notation of that paper) is $Z_t = X_t + U_t - D_t$, where $X$ is an uncontrolled one-dimensional regular diffusion, and $(U,D)$ gives the minimal decomposition of a process of bounded variation. In our paper, instead, the dynamics of the state variable is given in differential form (see \eqref{stateX}), and, differently to \cite{Matomaki}, the controlled state process at time $t\geq 0$ cannot be written as the sum of an uncontrolled one and of the cumulative bounded variation control exerted up to time $t$. Finally, in \cite{Matomaki} the optimal control is sought within the class of barrier policies, whereas we here obtain optimality in a larger class (see our Definition \ref{def:admiss} below).

The rest of the paper is organized as follows. In Section \ref{sec:setting} we set up the probabilistic setting, whereas in Section \ref{sec:problem} we introduce the exchange rates' optimal management problem that is the object of our study. In Section \ref{sec:solving} we solve the problem by proving first a preliminary verification theorem, and then constructing the value function and the optimal control. In Section \ref{sec:OU} we assume that the (log-)exchange rate is an Ornstein-Uhlenbeck process, and we provide the sensitivity of the free boundaries with respect to the model's parameters and a study of the expected hitting time at the free boundaries. Finally, in the appendix we collect some auxiliary results needed in the paper.


\section{Setting and Problem formulation}
\label{sec:problemsetting}

\subsection{The Probabilistic Setting}
\label{sec:setting}

Let $(\Omega,\mathcal{F},\ppp)$ be a complete probability space, $B$ a one-dimensional Brownian motion, and denote by $\F = (\mathcal{F}_t)_{t \geq 0}$ a right-continuous filtration to which $B$ is adapted. We introduce the nonempty sets
\begin{align}
\aaa := & \{ \nu:\Omega \times \R_+ \to \R_+, \mbox{ $\F$-adapted and such that } t \mapsto \nu_t \mbox{ is a.s.} \label{setA} \\
& \hspace{1cm} \mbox{(locally) of bounded variation, left-continuous and s.t. } \nu_0 = 0 \},\nonumber \\
\mathcal{U} := & \{\vartheta: \vartheta \in \aaa\,\,\mbox{ and }  t \mapsto \vartheta_t \mbox{ is nondecreasing}\}. \label{setU}
\end{align}
Then, for any $\nu \in \aaa$, we denote by $\xi, \eta \in \mathcal{U}$ the two processes providing the minimal decomposition of $\nu$; that is, such that
$$ \nu_t = \xi_t - \eta_t, \quad t \geq 0,$$
and the increments $\Delta\xi_t = \xi_{t+} - \xi_t$ and $\Delta\eta_t:=\eta_{t+}-\eta_t$ are supported on disjoint subsets of $\R_+$. In the following, we set $\xi_0  = \eta_0 = 0$ a.s., without loss of generality, and for frequent future use we notice that any $\nu \in \aaa$ satisfies
$$\nu_t = \nu^c_t + \nu^j_t, \qquad t \geq 0.$$
Here $\nu^c$ is the continuous part of $\nu$, and the jump part $\nu^j$ is such that $\nu^j_t:= \sum_{0 \leq s < t}\Delta \nu_s$, where $\Delta\nu_t := \nu_{t +} - \nu_t$, $t\geq0$.

We then consider on $(\Omega,\mathcal{F},\ppp)$ a process $X$ satisfying the following stochastic differential equation (SDE) 
\begin{equation}
\label{stateX}
dX_t = \mu(X_t) dt + \sigma(X_t) dB_t + d\xi_t - d\eta_t, \quad X_0 = x \in \cI.
\end{equation}
Here $\cI := (\underline x, \overline x)$, with $- \infty \leq \underline x < \overline x \leq + \infty$, and $\mu$ and $\sigma$ are suitable drift and diffusion coefficients. The process $X$ represents the (log-)exchange rate between two currencies. The drift coefficient $\mu$ measures the trend of the exchange rate, whereas $\sigma$ the fluctuations around this trend. The central bank can adjust the level of $X$ through the processes $\xi$ and $\eta$. In particular, $\xi_t$ and $\eta_t$ could be an indication of the cumulative amount of the foreign currency which has been bought or sold up to time $t\geq 0$ in order to push the level of the exchange rate up or down, respectively.

The following assumption ensures that, for any $\nu \in \aaa$, there exists a unique strong solution to \eqref{stateX} (see \cite{Protter}, Theorem V.7).
\begin{assum}
\label{A1}
The coefficients $\mu:\R \to \R$ and $\sigma:\R \to (0,\infty)$ belong to $C^1(\R)$. Moreover, there exists $L>0$ such that for all $x,y \in \cI$, 
$$ |\mu(x) - \mu(y)| + |\sigma(x) - \sigma(y)| \leq L|x - y|. $$
\end{assum}

From now on, in order to stress its dependence on the initial value $x \in \cI$ and on the two processes $\xi$ and $\eta$, we refer to the (left-continuous) solution to \eqref{stateX} as $X^{x;\xi,\eta}$, where appropriate. Also, in the rest of the paper we use the notation $\E_x[f(X^{\xi,\eta}_t)]=\E[f(X^{x,\xi,\eta}_t)]$. Here $\E_x$ is the expectation under the measure $\ppp_x(\,\cdot\,):=\ppp(\,\cdot\,|X^{\xi,\eta}_0=x)$ on $(\Omega,\mathcal{F})$, and $f:\R \to \R$ is any Borel-measurable such that $f(X^{\xi,\eta}_t)$ is integrable.

We also denote by
$$ \sigma_{\cI} := \inf\{ t \geq 0\ |\ X_t^{x;\xi,\eta} \notin \cI \} $$
the first time when the controlled process $X_t^{x;\xi,\eta}$ leaves $\cI$.

We also consider a one-dimensional diffusion evolving according to the SDE
\begin{equation}
\label{statehatX}
d\widehat X_t = [ \mu(\widehat X_t) + (\sigma \sigma')(\widehat X_t)] dt + \sigma(\widehat X_t) d\widehat B_t, \quad \widehat X_0 = x \in \cI.
\end{equation}
Notice that, under Assumption \ref{A1}, there exists a weak solution $(\widehat{\Omega}, \widehat{\mathcal{F}}, \widehat{\F}, \widehat{\P}_x,\widehat B,\widehat{X})$ of \eqref{statehatX} that is unique in law, up to a possible explosion time (see Chapter 5.5 in \cite{KS}, among others). Indeed, under Assumption \ref{A1} one has that for any $x \in \cI$ there exists $\epsilon_o > 0$ such that 
\begin{equation}
\label{LIhatX}
\int_{x - \epsilon_o}^{x + \epsilon_o} \frac{1 + |\mu(z)| + |\sigma \sigma'(z)|}{|\sigma^2(z)|}\ dz < + \infty.
\end{equation}

\noindent We shall consider such a solution fixed for any initial condition $x\in \mathcal{I}$ throughout this paper. Moreover, \eqref{LIhatX} guarantees that $\widehat X$ is a regular diffusion. That is, starting from $x \in \cI$, $\widehat X$ reaches any other $y \in \cI$ in finite time with positive probability. Finally, to stress the dependence of $\X$ on its initial value, from now on we write $\X^x$, where needed, and we denote by $\widehat{\E}_x$ the expectation under the measure $\widehat{\P}_x$.

\begin{remark}
\label{rem:Girsa}
Define the new measure $\mathbb{Q}_x$ through the Radon-Nikodym derivative 
\begin{align}
\label{zeta}
Z_t:=\frac{d\mathbb{Q}_x}{d\ppp_x}\bigg|_{\mathcal{F}_t}=\exp\Big\{\int^t_0\sigma'(X^{0,0}_s)dB_s-\frac{1}{2}\int^t_0(\sigma')^2(X^{0,0}_s)ds\Big\},\qquad\P_x-\text{a.s.},
\end{align}
which is an exponential martingale by the boundedness of $\sigma'$. Then by Girsanov theorem the process 
\begin{equation}
\label{wideB}
\widehat{B}_t:=B_t-\int_0^t\sigma'(X^{0,0}_s)ds \quad \mbox{ is a standard Brownian motion under $\mathbb{Q}_x$,}
\end{equation}
and it is not hard to verify that $\text{Law}\,(X^{0,0}\big|\mathbb{Q}_x)=\text{Law}\,(\X\big|\widehat{\ppp}_x)$.  
\end{remark}

The infinitesimal generator of the uncontrolled diffusion $X^{x;0,0}$ is denoted by $\cL_{X}$ and is defined as
\begin{align}
\label{eq:LX}
(\cL_{X} f)\,(x):=\frac{1}{2}\sigma^2(x)f''(x)+\mu(x)f'(x),\quad f\in C^2(\overline{\cI}),\, x\in\cI,
\end{align}
whereas the one of $\X$ is denoted by $\cL_{\X}$ and is defined as
\begin{align}
\label{eq:LXhat}
(\cL_{\X} f)\,(x):=\frac{1}{2}\sigma^2(x)f''(x)+(\mu(x)+\sigma(x)\sigma'(x))f'(x),\quad f\in C^2(\overline{\cI}), x\in\cI.
\end{align}
Letting $r>0$ be a fixed constant, we make the following standing assumption.
\begin{assum}
\label{ass:rate}
$r-\mu'(x) >0$ for $x\in\overline{\cI}$.
\end{assum}
\noindent In the subsequent optimization problem, the parameter $r>0$ will play the role of the central bank's discount factor (see \eqref{costfunct} below). 

We introduce $\psi$ and $\phi$ as the fundamental solutions of the ordinary differential equation (ODE) (see Ch.~2, Sec.~10 of \cite{BS}),
\begin{align}
\label{ODE}
\cL_X u(x)-ru(x)=0,\qquad x\in\cI,
\end{align}
and we recall that they are strictly increasing and decreasing, respectively. For an arbitrary $x_0 \in \mathcal{I}$ we also denote by 
$$ S'(x) := \exp\left(- \int_{x_0}^x \frac{2\mu(z)}{\sigma^2(z)}\ dz \right), \qquad x\in\cI, $$
the derivative of the scale function of $(X^{x;0,0}_t)_{t\ge 0}$, and by $W$ the constant Wronskian
\begin{equation}
\label{WronskianX}
W:= \frac{{\psi}'(x){\phi}(x) - {\phi}'(x){\psi}(x)}{S'(x)}, \quad x \in \cI.
\end{equation}

Moreover, under Assumption \ref{A1}, any solution to the ODE
\begin{align}
\label{ODE2}
\cL_{\X} u(x)-(r-\mu'(x))u(x)=0,\qquad x\in\cI,
\end{align}
can be written as a linear combination of the fundamental solutions $\widehat{\psi}$ and $\widehat{\phi}$, which again by \cite[Chapter 2.10]{BS} are strictly increasing and decreasing, respectively. 
Finally, letting $x_0 \in \mathcal{I}$ to be arbitrary, we denote by 
$$ \S'(x) := \exp\left(- \int_{x_0}^x \frac{2\mu(z) + 2 \sigma (z) \sigma'(z)}{\sigma^2(z)}\ dz \right), \qquad x\in\cI, $$ 
the derivative of the scale function of $(\X^x_t)_{t\ge 0}$, by 
\begin{equation}
\label{hatm}
\widehat{m}'(x) := \frac{2}{\sigma^2(x)\, \S'(x)},
\end{equation}
the density of the speed measure of $(\X^x_t)_{t\ge 0}$, and by $w$ the Wronskian
\begin{equation}
\label{Wronskian}
w:= \frac{\widehat{\psi}'(x)\widehat{\phi}(x) - \widehat{\phi}'(x)\widehat{\psi}(x)}{\S'(x)}, \quad x \in \cI.
\end{equation}

\begin{remark}
It is easy to see that the scale functions and speed measures of the two diffusions $X^{x;0,0}$ and $\X^x$ are related through $\S'(x)=S(x)/\sigma^2(x)$ and $\widehat{m}'(x)=2/S'(x)$ for $x \in \cI$.
\end{remark}

Concerning the boundary behavior of the real-valued It\^o-diffusions $X^{x;0,0}$ and $\widehat X$, in the rest of this paper we assume that $\underline x$ and $\overline x$ are natural for those two processes (see \cite{BS} for a complete discussion of the boundary behavior of one-dimensional diffusions). This in particular means that they are unattainable in finite time and that
\begin{equation}
\label{psiphiproperties1}
\lim_{x \downarrow \underline{x}}\psi(x) = 0,\,\,\,\,\lim_{x \downarrow \underline{x}}\phi(x) = + \infty,\,\,\,\,\lim_{x \uparrow \overline{x}}\psi(x) = + \infty,\,\,\,\,\lim_{x \uparrow \overline{x}}\phi(x) = 0,
\end{equation}
\begin{equation}
\label{psiphiproperties2}
\lim_{x \downarrow \underline{x}}\frac{\psi'(x)}{S'(x)} = 0,\,\,\,\,\lim_{x \downarrow \underline{x}}\frac{\phi'(x)}{S'(x)} = -\infty,\,\,\,\,\lim_{x \uparrow \overline{x}}\frac{\psi'(x)}{S'(x)} = + \infty,\,\,\,\,\lim_{x \uparrow \overline{x}}\frac{\phi'(x)}{S'(x)} = 0,
\end{equation}
and
\begin{equation}
\label{psiphiproperties1bis}
\lim_{x \downarrow \underline{x}}\widehat{\psi}(x) = 0,\,\,\,\,\lim_{x \downarrow \underline{x}}\widehat{\phi}(x) = + \infty,\,\,\,\,\lim_{x \uparrow \overline{x}}\widehat{\psi}(x) = + \infty,\,\,\,\,\lim_{x \uparrow \overline{x}}\widehat{\phi}(x) = 0,
\end{equation}
\begin{equation}
\label{psiphiproperties2bis}
\lim_{x \downarrow \underline{x}}\frac{\widehat{\psi}'(x)}{\S'(x)} = 0,\,\,\,\,\lim_{x \downarrow \underline{x}}\frac{\widehat{\phi}'(x)}{\S'(x)} = -\infty,\,\,\,\,\lim_{x \uparrow \overline{x}}\frac{\widehat{\psi}'(x)}{\S'(x)} = + \infty,\,\,\,\,\lim_{x \uparrow \overline{x}}\frac{\widehat{\phi}'(x)}{S'(x)} = 0.
\end{equation}

In the following we also make the next standing assumption.
\begin{assum}
\label{ass:psiprimephiprime}
One has $\lim_{x\downarrow \underline x}\phi'(x)=-\infty$ and $\lim_{x\uparrow \overline x}\psi'(x)=\infty$.
\end{assum}

Under Assumption \ref{ass:psiprimephiprime} we show in Lemma \ref{lem:AM} in the Appendix that one has $\widehat{\phi}=-\phi'$ and $\widehat{\psi}=\psi'$ (see also the second part of the proof of Lemma 4.3 in \cite{AlvarezMatomaki}).

\begin{remark}
It is worth noticing that all the assumptions that we have made regarding the diffusions $X^{x;0,0}$ and $\widehat X$ (namely, Assumptions \ref{A1}, \ref{ass:rate} and \ref{ass:psiprimephiprime}) are satisfied in the relevant cases of a (log-)exchange rate given, e.g., by a drifted Brownian motion (i.e.\ $\mu(x) = \mu>0$ and $\sigma(x)=\sigma>0$), or by a mean-reverting process (i.e.\ $\mu(x) = \theta(\mu-x)$, for some constants $\theta>0$, $\mu\in \R$ and $\sigma(x)=\sigma>0$), both defined on $\cI = \R$, i.e.\ with $\underline x = - \infty$, $\bar x = + \infty$.
\end{remark}

For future reference, for all $x,y \in \cI$ we introduce the Green functions associated to the diffusion $X^{x;0,0}$
\begin{equation}
\label{Green}
{G}(x,y):= W^{-1} \cdot \left\{
\begin{array}{ll}
{\psi}(x){\phi}(y), & x \leq y,\\[+4pt]
{\phi}(x){\psi}(y), & x \geq y,
\end{array}
\right.
\end{equation}
and to the diffusion $\X^x$
\begin{equation}
\label{Greenhat}
\widehat{G}(x,y):= w^{-1} \cdot \left\{
\begin{array}{ll}
\widehat{\psi}(x)\widehat{\phi}(y), & x \leq y,\\[+4pt]
\widehat{\phi}(x)\widehat{\psi}(y), & x \geq y.
\end{array}
\right.
\end{equation}
Then one has that the resolvents
\begin{equation}
\label{resolvent}
({R} f)(x) := {\E}_x\bigg[ \int_0^\infty e^{-rs} f(X^{0,0}_s)\ ds \bigg], \quad x \in \cI, 
\end{equation}
and
\begin{equation}
\label{resolventhat}
(\widehat{R} f)(x) := \widehat{\E}_x\bigg[ \int_0^\infty e^{- \int_0^s (r-\mu'(\X_u)) du} f(\X_s)\ ds \bigg], \quad x \in \cI, 
\end{equation}
which are defined for any function $f$ such that the previous expectations are finite, admit the representations
\begin{equation}
\label{resolventrepr}
(R f)(x) = {\E}_x\bigg[\int_0^{\infty} e^{-rs} f(X^{0,0}_s) ds\bigg] = \int_{\cI} f(y){G}(x,y){m}'(y) dy, 
\end{equation}
and
\begin{equation}
\label{resolventhatrepr}
(\widehat{R} f)(x) = \widehat{\E}_x\bigg[\int_0^{\infty} e^{- \int_0^s (r-\mu'(\X_u)) du} f(\X_s) ds\bigg] = \int_{\cI} f(y)\widehat{G}(x,y)\widehat{m}'(y) dy, 
\end{equation}
for all $x \in \cI$. 
Notice that $Rf$ and $\widehat{R}f$ solve the ODEs
\begin{equation}
\label{resolventsODE}
\big(\cL_X - r\big)(Rf)(x)=-f(x), \qquad \big(\cL_{\X} - (r-\mu'(x))\big)(\widehat{R}f)(x) = -f(x),
\end{equation}
for any $x \in \cI$. Moreover,
\begin{equation}
\label{relation-resolvents}
(Rf)'(x) = (\widehat{R}f')(x), \qquad x \in \cI,
\end{equation}
for any $f \in C^1(\cI)$ such that $Rf$ and $\widehat{R}f'$ are well defined (a proof of relation \eqref{relation-resolvents} can be found in the appendix for the sake of completeness).

Finally, the following useful equations hold for any $\underline{x}<\alpha<\beta<\overline{x}$ (cf.~par.~10, Ch.~2 of \cite{BS}):
\begin{equation}
\label{psiphiproperties3}
\left\{ \begin{array}{ll}
\displaystyle \frac{\widehat{\psi}'(\beta)}{\S'(\beta)} - \frac{\widehat{\psi}'(\alpha)}{\S'(\alpha)}= \int_{\alpha}^{\beta}\widehat{\psi}(y)(r - \mu'(y))\widehat{m}'(y) dy, \vspace{0.25cm} \\
\displaystyle \frac{\widehat{\phi}'(\beta)}{\S'(\beta)} - \frac{\widehat{\phi}'(\alpha)}{\S'(\alpha)}= \int_{\alpha}^{\beta}\widehat{\phi}(y)(r - \mu'(y))\widehat{m}'(y) dy.	
\end{array} \right.
\end{equation}


\subsection{The Optimal Control Problem}
\label{sec:problem}

In this section we introduce the optimization problem faced by the central bank. The central bank can adjust the level of the exchange rate by purchasing or selling one of the two currencies (i.e.\ by properly exerting $\xi$ and $\eta$), and we suppose that a policy of currency's devaluation or evaluation results into proportional costs, $c_1$ and $c_2$, that depend on the current level of the exchange rate. Also, we assume that, being $X_t$ the level of the (log-)exchange rate at time $t\geq 0$, the central bank faces an holding cost $h(X_t)$. 

The total expected cost associated to a central bank's policy $\nu\in \aaa$ is therefore
\begin{equation}
\label{costfunct}
\mathcal{J}_x(\nu) := \E_x\bigg[ \int_0^{\sigma_{\cI}} e^{- r s} h(X_s^{\xi,\eta})\ ds +  \int_0^{\sigma_{\cI}} e^{- r s} \Big(c_1(X_s^{\xi,\eta})\, {\scriptstyle{\oplus}}\, d\xi_s +   c_2(X_s^{\xi,\eta})\, {\scriptstyle{\ominus}}\, d\eta_s\Big) \bigg].
\end{equation}
In \eqref{costfunct} $r>0$ is a suitable discount factor of the central bank,
\begin{align}
\label{defintegral1}
& \int_0^{\sigma_\cI} e^{- r s} c_1(X_s^{x,\xi,\eta})\,{\scriptstyle{\oplus}}\,d\xi_s :=\int_0^{\sigma_I} e^{- r s} c_1(X_s^{x,\xi,\eta})\ d\xi^c_s \nonumber \\
& \hspace{0.5cm} + \sum_{s < \sigma_\cI} e^{- r s} \int_0^{\Delta \xi_s} c_1(X_s^{\xi,\eta} + z)\ dz, 
\end{align}
and
\begin{align}
\label{defintegral2}
& \int_0^{\sigma_\cI} e^{- r s} c_2(X_s^{x,\xi,\eta})\,{\scriptstyle{\ominus}}\,d\eta_s:= \int_0^{\sigma_I} e^{- r s} c_2(X_s^{x,\xi,\eta})\ d\eta^c_s \nonumber \\
&\hspace{0.5cm} + \sum_{s < \sigma_\cI} e^{- r s} \int_0^{\Delta \eta_s} c_2(X_s^{\xi,\eta} - z)\ dz,
\end{align}
and $\xi^c$ and $\eta^c$ denote the continuous parts of $\xi$ and $\eta$, respectively. Notice that the definition of the costs of control as in \eqref{defintegral1} and \eqref{defintegral2} 
has been introduced in \cite{Zhu92}, and it is now common in the singular stochastic control literature (see \cite{LZ11}, among many others).

Regarding the holding cost $h$ and the proportional costs $c_i$, we suppose the following.
\begin{assum} 
\label{A2}
\begin{itemize}\hspace{10cm}
\item[(i)] $h:\R \to [0,+ \infty)$ belongs to $C^1(\cI)$;
\item[(ii)] For any $i=1,2$, $c_i:\R \to \R$ belongs to $C^2(\cI)$. Moreover, setting $\widehat{c}_i := (\cL_{\X} - (r - \mu')) c_i$, $i = 1,2$, we have
$$ - \widehat{c}_1(x) + h'(x) \left\{ \begin{array}{ll}
	< 0,	& x < \widetilde{x}_1, \\
	= 0,	& x = \widetilde{x}_1, \\
	> 0,	& x > \widetilde{x}_1,
	\end{array} \right. $$
$$ \widehat{c}_2(x) + h'(x) \left\{ \begin{array}{ll}
	< 0,	& x < \widetilde{x}_2, \\
	= 0,	& x = \widetilde{x}_2, \\
	> 0,	& x > \widetilde{x}_2,
	\end{array} \right. $$
for some $\widetilde{x}_1$, $\widetilde{x}_2$ such that $\underline x < \widetilde{x}_1 < \widetilde{x}_2 < \overline x$. Furthermore, 
$$c_1(x) + c_2(x) > 0, \quad x \in \overline{\cI},$$ 
$$\widehat{c}_1(x) + \widehat{c}_2(x) < 0, \quad x \in {\cI},$$
and the representation
\begin{equation}
\label{repr-c}
c_i(x) = - \widehat{\E}_x\bigg[\int_0^{\infty} e^{-\int_0^s (r - \mu'(\X_u))du}\, \widehat{c}_i(\X_s) ds\bigg] = - (\widehat{R}\,\widehat{c}_i)(x), \quad x \in \cI,
\end{equation}
holds true.
Finally, there exists $K_i > 0$ and $\gamma \geq 1$ such that
$$ |c_i(x)| \leq K_i (1 + |x|^\gamma), \quad x \in \cI. $$
\end{itemize}
\end{assum}

\begin{remark}
\begin{enumerate}\hspace{10cm}
\item All the results of this paper also hold for a slighly weaker regularity condition on $c_i$, $i=1,2$; namely, if $c_i \in W^{2,\infty}_{loc}(\cI)$. The latter is equivalent by Sobolev's embeddings (see, e.g., Cor.~9.15 in Ch.~9 of \cite{Br}) to assuming that, for any $i=1,2$, $c_i$ is continuously differentiable with second derivative which is locally bounded in $\cI$.
\item It is easy to verify that, for example, $h(x) = \frac12 (x - \theta)^2$, $\theta\in \R$, and $c_i(x)=c_i>0$ for all $x \in \cI$ satisfy Assumption \ref{A2}. 
\item It is worth noticing that \eqref{repr-c} is in essence an integrability condition. Indeed, if the trasversality condition 
\begin{equation*}
\lim_{t \to + \infty} \widehat{\mathbb{E}}_x\left[ e^{-\int_0^t (r -\mu'(\X_s)) ds} c_i(\X_t) \right] = 0, \qquad i=1,2,
\end{equation*}
holds true and 
$$\widehat{\mathbb{E}}_x\bigg[\int_0^{\infty}e^{-\int_0^s (r - \mu'(\X_u))du}\, |\widehat{c}_i(\X_s)| ds\bigg] < \infty,$$
then an application of Dynkin's formula (up to a standard localization argument) gives \eqref{repr-c}.
\end{enumerate}
\end{remark}

The following definition characterizes the class of admissible controls. 

\begin{defn}
\label{def:admiss}
For any $x \in \cI$ we say that $\nu \in \aaa$ is an {\rm admissible control}, and we write $\nu \in \A(x)$, if $X^{x,\xi,\eta}_t \in \cI$ for all $t > 0$ (i.e., $\sigma_{\cI}=+\infty$ $\P_x$-a.s.) and the following hold true:
\begin{itemize}
\item[(a)]  $ \displaystyle \E_x\bigg[\int_0^{\infty} e^{-rs} |c_1(X_s^{\xi,\eta})|\,{\scriptstyle{\oplus}}\,d\xi_s +  \int_0^{\infty} e^{-rs} |c_2(X_s^{\xi,\eta})|\,{\scriptstyle{\ominus}}\,d\eta_s \bigg] < + \infty$; \vspace{0.25cm}

\item[(b)]   $ \displaystyle \E_x\bigg[ \int_0^{\infty} e^{-rs} h(X_s^{\xi,\eta})\ ds \bigg] < + \infty$; \vspace{0.25cm}

\item[(c)]   $ \displaystyle \E_x\Big[ \sup_{t \geq 0} e^{- \frac{r}{2} t} |X_t^{\xi,\eta}|^{1 + \gamma} \Big] < + \infty$ \qquad \text{(for $\gamma$ as in Assumption \ref{A2}-(ii))}.
\end{itemize}
\end{defn}
 

The central bank aims at picking an admissible $\nu^{\star}$ such that the total expected cost functional \eqref{costfunct} is minimized; that is, it aims at solving
\begin{equation}
\label{value}
v(x) := \inf_{\nu \in \A(x)} \mathcal{J}_x(\nu), \qquad x \in \cI.
\end{equation}

Problem \eqref{value} takes the form of a singular stochastic control problem (see, e.g., \cite{Shreve} for an introduction); that is, a problem where the (random) measure on $\R_+$ induced by a control process might be singular with respect to the Lebesgue measure. 


\section{Solving the Problem}
\label{sec:solving}

\subsection{A Preliminary Verification Theorem}
\label{vertheo}

In this section we prove a verification theorem, which provides a set of sufficient conditions under which a candidate value function and a candidate control process are indeed optimal. To this end, we notice that according to the classical theory of singular stochastic control (see, e.g., Chapter VIII of \cite{FlemingSoner}), we expect $v$ to identify with a suitable solution to the Hamilton-Jacobi-Bellman (HJB) equation
\begin{equation}
\label{HJB}
\min\Big\{\big(\cL_X - r\big)u(x) + h(x), c_2(x) - u'(x), u'(x) + c_1(x)\Big\}= 0, \qquad x \in \cI.
\end{equation}
In fact, the latter takes the form of a variational inequality with state-dependent gradient constraints.

\begin{thm}[Verification Theorem]
\label{thm:verifico}
Suppose that Assumption \ref{A2} holds true and assume that the Hamilton-Jacobi-Bellman equation \eqref{HJB} admits a $C^2$ solution $u:\cI \to \R$ such that
\begin{equation*}
\label{growthcandidate}
|u(x)| \leq K(1 + |x|^{1+\gamma}), \quad x \in \cI,
\end{equation*}
for some $K>0$, and where $\gamma \geq 1$ is the growth coefficients of $c_i$, $i=1,2$ (see Assumption \ref{A2}-(ii)). Then one has that $u \leq v$ on $\cI$.

Moreover, given an initial condition $x \in \cI$, suppose also that there exists $\widehat{\nu} \in \A(x)$ such that the processes $\widehat{\xi}$ and $\widehat{\eta}$ providing its minimal decomposition are such that
\begin{equation}
\label{insideC}
X^{x,\widehat{\xi},\widehat{\eta}}_t \in \Big\{x \in \cI:\, \big(\cL_X - r\big)u(x) + h(x) =0\Big\},
\end{equation}
Lebesgue-a.e.\ $\P$-a.s., the process
\begin{equation}
\label{mg}
\bigg(\int_0^t  e^{-rs} \sigma(X^{x;\widehat{\xi},\widehat{\eta}}_s) u'(X^{x;\widehat{\xi},\widehat{\eta}}_s)\ dB_s\bigg)_{t\geq0} \quad \mbox{is an $\mathbb{F}$-martingale},
\end{equation}
and
\begin{equation}
\label{flatoff}
\left\{ \begin{array}{ll}
\displaystyle \int_0^T \big(u'(X^{x,\widehat{\xi},\widehat{\eta}}_t) + c_1(X^{x,\widehat{\xi},\widehat{\eta}}_t)\big)\,{\scriptstyle{\oplus}}\,d\widehat{\xi}_t =0,\\ \\
\displaystyle \int_0^T \big(c_2(X^{x,\widehat{\xi},\widehat{\eta}}_t) - u'(X^{x,\widehat{\xi},\widehat{\eta}}_t)\big)\,{\scriptstyle{\ominus}}\,d\widehat{\eta}_t =0,
\end{array} 
\right.
\end{equation}
for all $T\geq 0$ $\P$-a.s. Then $u = v$ on $\cI$ and $\widehat{\nu}$ is optimal for \eqref{value}.
\end{thm}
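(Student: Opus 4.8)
The proof will follow the classical two-step verification scheme: first an Itô/Dynkin expansion of $e^{-rt}u(X_t)$ along an arbitrary admissible control, combined with the three inequalities built into \eqref{HJB}, yields $u\le v$; then the same expansion along $\widehat\nu$, with all inequalities turned into equalities by \eqref{insideC}, \eqref{mg} and \eqref{flatoff}, yields $u(x)=\mathcal{J}_x(\widehat\nu)$ and hence optimality. Throughout, for $\nu\in\A(x)$ with minimal decomposition $(\xi,\eta)$ I write $X_s:=X^{x;\xi,\eta}_s$.

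\emph{Step 1 ($u\le v$).} Fix $x\in\cI$ and $\nu\in\A(x)$; in particular $X_s\in\cI$ for all $s$ and $\sigma_\cI=+\infty$. Applying the change-of-variables formula for the (left-continuous) controlled diffusion to $e^{-rt}u(X_t)$ gives
\begin{align*}
e^{-rt}u(X_t) = {}& u(x) + \int_0^t e^{-rs}(\cL_X - r)u(X_s)\,ds + \int_0^t e^{-rs}\sigma(X_s)u'(X_s)\,dB_s \\
&+ \int_0^t e^{-rs}u'(X_s)\,d\xi^c_s - \int_0^t e^{-rs}u'(X_s)\,d\eta^c_s + J_t,
\end{align*}
where, since the jumps of $\xi$ and $\eta$ live on disjoint sets,
\[
J_t = \sum_{s<t}e^{-rs}\int_0^{\Delta\xi_s}u'(X_s+z)\,dz - \sum_{s<t}e^{-rs}\int_0^{\Delta\eta_s}u'(X_s-z)\,dz.
\]
Using the HJB inequalities $(\cL_X-r)u+h\ge0$, $u'+c_1\ge0$ and $c_2-u'\ge0$: the drift integral is bounded below by $-\int_0^t e^{-rs}h(X_s)\,ds$; the $d\xi^c$-integral together with the first sum in $J_t$ is bounded below by $-\int_0^t e^{-rs}c_1(X_s)\,{\scriptstyle{\oplus}}\,d\xi_s$; and the $d\eta^c$-integral together with the second sum by $-\int_0^t e^{-rs}c_2(X_s)\,{\scriptstyle{\ominus}}\,d\eta_s$ (matching precisely the convention \eqref{defintegral1}--\eqref{defintegral2}). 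Rearranging, taking $\E_x$ after localizing the stochastic integral by stopping times $\tau_n\uparrow+\infty$, one obtains
\begin{align*}
u(x)\le {}& \E_x\Big[\int_0^{t\wedge\tau_n}e^{-rs}h(X_s)\,ds + \int_0^{t\wedge\tau_n}e^{-rs}c_1(X_s)\,{\scriptstyle{\oplus}}\,d\xi_s + \int_0^{t\wedge\tau_n}e^{-rs}c_2(X_s)\,{\scriptstyle{\ominus}}\,d\eta_s\Big] \\
&+ \E_x\big[e^{-r(t\wedge\tau_n)}u(X_{t\wedge\tau_n})\big].
\end{align*}
I then let $n\to\infty$ (monotone convergence for the nonnegative holding cost; dominated convergence for the cost-of-control terms, using Definition \ref{def:admiss}(a) to dominate by the $|c_i|$-integrals; dominated convergence for the last term) and then $t\to\infty$. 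The key point for the boundary term is that the polynomial growth $|u(x)|\le K(1+|x|^{1+\gamma})$ and Definition \ref{def:admiss}(c) make $\sup_{s\ge0}e^{-rs}|u(X_s)|$ dominated by an integrable random variable that also vanishes $\P_x$-a.s.\ as $s\to\infty$, so $\E_x[e^{-rt}u(X_t)]\to0$. Hence $u(x)\le\mathcal{J}_x(\nu)$, and taking the infimum over $\A(x)$ gives $u\le v$ on $\cI$.

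\emph{Step 2 (optimality of $\widehat\nu$).} Repeating the expansion along $X_s=X^{x;\widehat\xi,\widehat\eta}_s$, each of the inequalities above becomes an equality: \eqref{insideC} forces $(\cL_X-r)u(X_s)+h(X_s)=0$ for Lebesgue-a.e.\ $s$, $\P$-a.s.; and since the two integrands in \eqref{flatoff} are nonnegative by \eqref{HJB}, the flat-off conditions force $u'(X_s)+c_1(X_s)=0$ on the support of $d\widehat\xi^c$, $\int_0^{\Delta\widehat\xi_s}(u'(X_s+z)+c_1(X_s+z))\,dz=0$ at each jump of $\widehat\xi$, and symmetrically for $\widehat\eta$. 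Since by \eqref{mg} the stochastic integral is a true $\F$-martingale, no localization is needed, and taking $\E_x$ gives
\begin{align*}
u(x) = {}& \E_x\Big[\int_0^t e^{-rs}h(X_s)\,ds + \int_0^t e^{-rs}c_1(X_s)\,{\scriptstyle{\oplus}}\,d\widehat\xi_s + \int_0^t e^{-rs}c_2(X_s)\,{\scriptstyle{\ominus}}\,d\widehat\eta_s\Big] \\
&+ \E_x\big[e^{-rt}u(X_t)\big].
\end{align*}
Letting $t\to\infty$, the bracketed term converges to $\mathcal{J}_x(\widehat\nu)$ by Definition \ref{def:admiss}(a)--(b), and $\E_x[e^{-rt}u(X_t)]\to0$ exactly as in Step 1; therefore $u(x)=\mathcal{J}_x(\widehat\nu)\ge v(x)$. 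Combined with $u\le v$ this gives $u=v$ on $\cI$ and shows $\widehat\nu$ is optimal for \eqref{value}.

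The main obstacle I expect is not conceptual but technical: writing the generalized Itô formula for the left-continuous, mixed jump-and-continuous controlled diffusion so that the jump term $J_t$ is expressed exactly through the $\,{\scriptstyle{\oplus}}\,/\,{\scriptstyle{\ominus}}\,$ integrals of \eqref{defintegral1}--\eqref{defintegral2}, and then justifying the interchange of limits — in particular, controlling the boundary term $\E_x[e^{-rt}u(X_t)]$ via the polynomial growth of $u$ and the uniform-integrability bound in Definition \ref{def:admiss}(c), and using Definition \ref{def:admiss}(a) (integrability of $|c_i|$ against the controls) rather than mere integrability of the signed costs in order to run dominated convergence.
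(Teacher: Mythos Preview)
Your proposal is correct and follows essentially the same verification scheme as the paper: It\^o--Meyer expansion of $e^{-rt}u(X_t)$, use of the three HJB inequalities to obtain $u\le \mathcal{J}_x(\nu)$, then equality along $\widehat\nu$ via \eqref{insideC}, \eqref{mg}, \eqref{flatoff}. The only noteworthy technical difference is how the stochastic integral is handled in Step~1: you localize with stopping times $\tau_n\uparrow\infty$ and then pass to the limit $n\to\infty$ followed by $t\to\infty$, whereas the paper avoids the double limit by observing that, once the HJB inequalities have been applied, the local martingale $M_T=\int_0^T e^{-rs}\sigma(X_s)u'(X_s)\,dB_s$ is bounded above by an integrable random variable (via admissibility), hence is a submartingale with $\E_x[M_T]\ge0$, so its contribution can simply be dropped when taking expectations. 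Both routes are standard; the paper's is slightly more economical, while yours is perhaps more transparent.
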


\begin{proof}
The proof is organized in two steps. We first prove that $u \leq v$ on $\cI$, and then that $u \geq v$ on $\cI$, and $\widehat{\nu}$ is optimal for \eqref{value}.
\vspace{0.25cm}

\emph{Step 1.} Let $x \in \cI$ and $\nu \in \A(x)$. Since $u \in C^2(\cI)$ we can apply It\^o-Meyer's formula for semimartingales  (see \cite{Meyer}, pp.\ 278--301) to the process $(e^{-rt}u(X^{x,\xi,\eta}_t))_{t\geq 0}$ on an arbitrary time interval $[0,T]$, $T > 0$. Then, recalling that $\xi^c$ and $\eta^c$ denote the continuous parts of $\xi$ and $\eta$, respectively, we have
\begin{align}
\label{verifico1}
u(x) = \ & e^{-r T} u(X^{x;\xi,\eta}_T) - \int_0^T e^{-rs} (\cL_X - r)u(X^{x;\xi,\eta}_s)\ ds - M^{x;\xi,\eta}_T \nonumber \\
&  - \int_0^T  e^{-rs} u'(X^{x;\xi,\eta}_s)\ d\xi_s^c + \int_0^T  e^{-rs} u'(X^{x;\xi,\eta}_s)\ d\eta_s^c \\
& - \sum_{0 \leq s < T}  e^{-rs} \Big(u(X^{x;\xi,\eta}_{s+}) - u(X^{x;\xi,\eta}_s)\Big), \nonumber
\end{align}
where we have set
$$ M^{x;\xi,\eta}_T := \int_0^T  e^{-rs} \sigma(X^{x;\xi,\eta}_s) u'(X^{x;\xi,\eta}_s)\ dB_s. $$
Since the processes $\xi$ and $\eta$ jump on disjoint subsets of $\R_+$ we can write
$$  \sum_{0 \leq s < T}  e^{-rs} (u(X_{s+}) - u(X_s)) =  \sum_{0 \leq s < T}  e^{-rs} \bigg[ \int_0^{\Delta \xi_s} u'(X^{x;\xi,\eta}_s + z)\ dz -  \int_0^{\Delta \eta_s} u'(X^{x;\xi,\eta}_s - z)\ dz \bigg], $$
and because $(\cL_X - r) u \geq - h$ and $- c_1 \leq u' \leq c_2$ on $\cI$ by \eqref{HJB}, we end up from \eqref{verifico1} with
\begin{align}
\label{verifico1bis}
u(x) \leq \ & e^{-r T} u(X^{x;\xi,\eta}_T) + \int_0^T e^{-rs} h(X^{x;\xi,\eta}_s)\ ds - M^{x;\xi,\eta}_T \nonumber \\
& + \int_0^T  e^{-rs} c_1(X^{x;\xi,\eta}_s)\,{\scriptstyle{\oplus}}\,d\xi_s + \int_0^T  e^{-rs} c_2(X^{x;\xi,\eta}_s)\,{\scriptstyle{\ominus}}\,d\eta_s,
\end{align}
upon recalling \eqref{defintegral1} and \eqref{defintegral2}. 

By assumption, for all $x \in \cI$ one has  $|u(x)| \leq K ( 1 + |x|^{\gamma + 1})$, and therefore we can write for some $K>0$ that
\begin{align}
\label{verifico1tris}
u(x) \leq \ & e^{-\frac{r}{2} T}  K \Big( 1 + |X^{x;\xi,\eta}_T|^{\gamma + 1}\Big)e^{- \frac{r}{2} T}  + \int_0^T e^{-rs} h(X^{x;\xi,\eta}_s)\ ds - M^{x;\xi,\eta}_T  \nonumber \\
&  + \int_0^T  e^{-rs} c_1(X^{x;\xi,\eta}_s) \,{\scriptstyle{\oplus}}\, d\xi_s + \int_0^T  e^{-rs} c_2(X^{x;\xi,\eta}_s) \,{\scriptstyle{\ominus}}\, d\eta_s  \nonumber \\
\leq \ & e^{-\frac{r}{2} T}  K \Big( 1 + \sup_{t \geq 0} e^{- \frac{r}{2} t} |X^{x;\xi,\eta}_t|^{\gamma + 1} \Big)  + \int_0^T e^{-rs} h(X^{x;\xi,\eta}_s)\ ds - M^{x;\xi,\eta}_T  \\
&  + \int_0^T  e^{-rs} c_1(X^{x;\xi,\eta}_s) \,{\scriptstyle{\oplus}}\, d\xi_s + \int_0^T  e^{-rs} c_2(X^{x;\xi,\eta}_s) \,{\scriptstyle{\ominus}}\, d\eta_s. \nonumber
\end{align}
From the previous equation we have that, for all $T > 0$, 
\begin{align*}
M^{x;\xi,\eta}_T  \leq & - u(x) + K \Big( 1 + \sup_{t \geq 0} e^{- \frac{r}{2} t} |X^{x;\xi,\eta}_t|^{\gamma + 1} \Big)  + \\
&    + \int_0^{\infty}  e^{-rs} |c_1(X^{x;\xi,\eta}_s)| \,{\scriptstyle{\oplus}}\, d\xi_s + \int_0^\infty  e^{-rs} |c_2(X^{x;\xi,\eta}_s)| \,{\scriptstyle{\ominus}}\, d\eta_s  
\end{align*}
so that $M^{x;\xi,\eta}_T \in L^1(\P)$ by admissibility of $\nu$ (cf.\ Definition \eqref{def:admiss}); hence, $(M^{x;\xi,\eta}_T)_{T\geq 0}$ is a submartingale. Then, taking expectations in \eqref{verifico1tris} we have
\begin{align*}
u(x) \leq \ & e^{-\frac{r}{2} T} \E_x\Big[K \Big(1 + \sup_{t \geq 0} e^{- \frac{r}{2} t} |X^{x;\xi,\eta}_t|^{\gamma + 1} \Big)\Big]  \nonumber \\
& + \E_x\bigg[\int_0^T e^{-rs} h(X^{x;\xi,\eta}_s)\ ds + \int_0^T  e^{-rs} \Big(c_1(X^{x;\xi,\eta}_s) \,{\scriptstyle{\oplus}}\, d\xi_s + c_2(X^{x;\xi,\eta}_s) \,{\scriptstyle{\ominus}}\, d\eta_s\Big)  \bigg] \nonumber
\end{align*}
Taking limits as $T \uparrow + \infty$, and using the fact that $\nu$ is admissible (cf.\ Definition \ref{def:admiss}), by the dominated convergence theorem we get $u(x) \leq \mathcal{J}_x(\nu)$. Since the latter holds for any $x \in \cI$ and $\nu \in \A(x)$ we conclude that $u \leq v$ on $\cI$. 
\vspace{0.25cm}

\emph{Step 2.} Let again $x \in \cI$ be given and fixed, and take the admissible $\widehat{\nu}$ satisfying \eqref{insideC}, \eqref{mg} and \eqref{flatoff}. Then all the inequalities leading to \eqref{verifico1bis} become equalities, and taking expectations we obtain
\begin{align}
\label{verifico3}
u(x) = \ & \E_x\bigg[e^{-r T} u(X^{x;\widehat{\xi},\widehat{\eta}}_T)  + \int_0^T e^{-rs} h(X^{x;\widehat{\xi},\widehat{\eta}}_s)\ ds +  \\
&   + \int_0^T  e^{-rs} c_1(X^{x;\widehat{\xi},\widehat{\eta}}_s) \,{\scriptstyle{\oplus}}\, d\widehat{\xi}_s + \int_0^T  e^{-rs} c_2(X^{x;\widehat{\xi},\widehat{\eta}}_s) \,{\scriptstyle{\ominus}}\, d\widehat{\eta}_s  \bigg]. \nonumber 
\end{align}

By assumption we have that, for any $x \in \cI$, $u(x) \geq - K(1 + |x|^{1+\gamma})$, so that we can continue from \eqref{verifico3} by writing
\begin{align}
\label{verifico4}
u(x) \geq & - e^{-\frac{r}{2} T}\E_x\Big[ K \Big(1 + \sup_{t \geq 0} e^{- \frac{r}{2} t} |X^{x;\widehat{\xi},\widehat{\eta}}_t|^{\gamma + 1} \Big)\Big] + \E_x\bigg[\int_0^T e^{-rs} h(X^{x;\widehat{\xi},\widehat{\eta}}_s)\ ds +  \\
&   + \int_0^T  e^{-rs} c_1(X^{x;\widehat{\xi},\widehat{\eta}}_s) \,{\scriptstyle{\oplus}}\, d\widehat{\xi}_s + \int_0^T  e^{-rs} c_2(X^{x;\widehat{\xi},\widehat{\eta}}_s) \,{\scriptstyle{\ominus}}\, d\widehat{\eta}_s  \bigg]. \nonumber 
\end{align}
By admissibility of $\widehat{\nu}$ (see Definition \ref{def:admiss}) we can take limits as $T \uparrow \infty$, invoke the dominated convergence theorem for the second expectation in the right hand-side of \eqref{verifico4}, and finally find that 
\begin{equation*}
u(x) \geq  \E_x\bigg[\int_0^{\infty} e^{-rs} h(X^{x;\widehat{\xi},\widehat{\eta}}_s)\ ds + \int_0^{\infty}  e^{-rs} c_1(X^{x;\widehat{\xi},\widehat{\eta}}_s) \,{\scriptstyle{\oplus}}\, d\widehat{\xi}_s + \int_0^{\infty}  e^{-rs} c_2(X^{x;\widehat{\xi},\widehat{\eta}}_s) \,{\scriptstyle{\ominus}}\, d\widehat{\eta}_s  \bigg]. 
\end{equation*}
Hence $u(x) \geq \mathcal{J}_x(\widehat{\nu}) \geq v(x)$. Combining this inequality with the fact that $u \leq v$ on $\cI$ by \emph{Step 1}, we conclude that $u=v$ on $\cI$ and that $\widehat{\nu}$ is optimal.
\end{proof}


\subsection{Constructing a Candidate Solution}
\label{sec:construct}

We here construct a solution to the HJB equation \eqref{HJB}. In particular, given the structure of our problem, we conjecture that there exist two constant trigger values to be determined, say $a$ and $b$, such that 
$$\big\{x\in\cI: \big(\cL_X-r\big)u(x) + h(x) =0\big\} = (a,b),$$
and that 
\begin{equation}
\label{A1A2}
\big\{x\in\cI: u'(x) = -c_1(x) \big\} = (\underline{x},a], \quad \text{and} \quad \big\{x\in\cI: u'(x) = c_2(x) \big\} = [b,\overline{x}).
\end{equation}

Following this conjecture we thus start by solving the ODE 
\begin{equation} 
\label{ODE-continuation}
(\cL_X-r)u(x) + h(x) = 0
\end{equation}
in $(a,b) \subset \cI$, for some $a<b$ to be found. Recalling \eqref{resolventsODE}, the general solution to equation \eqref{ODE-continuation} is given by
\begin{equation}
\label{ucont}
u(x) = A \psi(x) + B \phi(x) + (R h)(x), \quad x \in (a,b),
\end{equation}
for some $A,B \in \R$. Also, with regard to \eqref{A1A2} we set 
$$u(x) = A\psi(a) + B\phi(a)  + (R h)(a) + \int_x^a c_1(y) dy$$ 
for any $x \in (\underline{x},a]$, and 
$$u(x) = A\psi(b) + B\phi(b)  + (R h)(b) + \int_b^x c_2(y) dy$$ 
for any $x \in [b,\overline{x})$. Notice that in this way the function $u$ is automatically continuous at $a$ and $b$.

In order to determine the four unknown constants $A$, $B$, $a$, and $b$, we assume that $u \in C^2(\cI)$, and we then find the nonlinear system of four equations
\begin{align}
A \psi'(a) + B \phi'(a) + (R h)'(a) & = (\widehat R \, \widehat c_1)(a), \label{1}\\
A \psi''(a) + B \phi''(a) + (R h)''(a) & =  (\widehat R \, \widehat c_1)'(a), \label{2}\\
A \psi'(b) + B \phi'(b) + (R  h)'(b) & =  -(\widehat R \, \widehat c_2)(b), \label{3}\\
A \psi''(b) + B \phi''(b) + (R h)''(b) & =  -(\widehat R \, \widehat c_2)'(b). \label{4}
\end{align}

Solving \eqref{1}--\eqref{2} with respect to $A$ and $B$ and using \eqref{relation-resolvents}, simple but tedious algebra and the fact that $\psi' = \widehat \psi$ and $\phi' = - \widehat \phi$ (cf.\ Lemma \ref{lem:AM} in the appendix) give
\begin{align}
A & =  \frac{\widehat \phi'(a) [\widehat R (h' - \widehat c_1)](a) - \widehat \phi(a) [\widehat R (h' - \widehat c_1)]'(a)}{\widehat \phi(a) \widehat \psi'(a) - \widehat \phi'(a) \widehat \psi(a)}, \label{Aa} \\
B & =  \frac{\widehat \psi'(a) [\widehat R (h' - \widehat c_1)](a) - \widehat \psi(a) [\widehat R (h' - \widehat c_1)]'(a)}{\widehat \phi(a) \widehat \psi'(a) - \widehat \phi'(a) \widehat \psi(a)}. \label{Ba}
\end{align}

Analogous calculations starting from \eqref{3}--\eqref{4} reveal
\begin{align*}
A & =  \frac{\widehat \phi'(b) [\widehat R (h' + \widehat c_2)](b) - \widehat \phi(b) [\widehat R (h' + \widehat c_2)]'(b)}{\widehat \phi(b) \widehat \psi'(b) - \widehat \phi'(b) \widehat \psi(b)}, \\
B & = \frac{\widehat \psi'(b) [\widehat R (h' + \widehat c_2)](b) - \widehat \psi(b) [\widehat R (h' + \widehat c_2)]'(b)}{\widehat \phi(b) \widehat \psi'(b) - \widehat \phi'(b) \widehat \psi(b)}. 
\end{align*}

Recalling \eqref{Wronskian}, we can then write $A = I_1(a) = I_2(b)$ and $B = J_1(a) = J_2(b)$, with
\begin{align*}
I_1(a) & :=  \frac{1}{w} \left[ \frac{\widehat \phi'(a)}{\widehat S'(a)} [\widehat R(h' - \widehat c_1)](a) - \frac{\widehat \phi(a)}{\S'(a)} [\widehat R(h' - \widehat c_1)]'(a) \right], \\
I_2(b) & :=  \frac{1}{w} \left[ \frac{\widehat \phi'(b)}{\widehat S'(b)} [\widehat R(h' + \widehat c_2)](b) - \frac{\widehat \phi(b)}{\S'(b)} [\widehat R(h' + \widehat c_2)]'(b) \right], \\
J_1(a) & :=  \frac{1}{w} \left[ \frac{\widehat \psi'(a)}{\widehat S'(a)} [\widehat R(h' - \widehat c_1)](a) - \frac{\widehat \psi(a)}{\S'(a)} [\widehat R(h' - \widehat c_1)]'(a) \right], \\
J_2(b) & :=  \frac{1}{w} \left[ \frac{\widehat \psi'(b)}{\widehat S'(b)} [\widehat R(h' + \widehat c_2)](b) - \frac{\widehat \psi(b)}{\S'(b)} [\widehat R(h' + \widehat c_2)]'(b) \right],
\end{align*}
so that the system for $a$ and $b$ reads
$$ I_1(a) - I_2(b) = 0, \qquad J_1(a) - J_2(b) = 0.$$

We now make the following standing assumption.
\begin{assum} 
\label{A3}
One has that
$$ \lim_{x \to \underline x} J_i(x) = 0 = \lim_{x \to \overline x} I_i(x), \quad i = 1,2.$$
\end{assum}
\noindent By \eqref{psiphiproperties1}--\eqref{psiphiproperties2bis}, the latter is essentially a requirement on the growth of $\widehat R(h' - \widehat c_1)$ and $\widehat R(h' + \widehat c_2)$, and of their derivatives. Assumption \ref{A3} then implies that for any $x \in \cI$

\begin{equation}
\label{rewriting}
I_i(x) = - \int_x^{\overline x} I'_i(z)\ dz,  \qquad J_i(x) =  \int_{\underline x}^x J'_i(z)\ dz, \qquad i = 1,2.
\end{equation}

Notice now that for any function $f \in C^2(\cI)$, standard differentiation, and the fact that $\cL_{\X}\S =0$ and $(\cL_{\X}-(r-\mu'))g = 0$ for $g=\widehat{\psi},\widehat{\phi}$, yield
\begin{equation}
\label{derivative1}
\frac{d}{dx} \left[ \frac{ f'(x)}{\S'(x)} \widehat \phi(x) - \frac{\widehat \phi'(x)}{\S'(x)} f(x) \right] = \widehat \phi(x) \widehat m'(x) (\cL_{\X} - (r - \mu'(x))) f(x), 
\end{equation}
and
\begin{equation}
\label{derivative2}
\frac{d}{dx} \left[  \frac{ f'(x)}{\S'(x)} \widehat \psi(x) - \frac{\widehat \psi'(x)}{\S'(x)} f(x) \right] = \widehat \psi(x) \widehat m'(x) (\cL_{\X} - (r - \mu'(x))) f(x).
\end{equation}
As a consequence, using \eqref{rewriting} we have that $I_1(a) = I_2(b)$ is equivalent to 
\begin{equation}
\label{system-a}
\int_a^{\overline x} \big(h'(z) - \widehat c_1(z)\big) \widehat m'(z) \widehat \phi(z)\ dz =  \int_b^{\overline x} \big(h'(z) + \widehat c_2(z)\big) \widehat m'(z) \widehat \phi(z)\ dz, 
\end{equation}
whereas $J_1(a) = J_2(b)$ is equivalent to
\begin{equation}
\label{system-b}
\int_{\underline x}^a \big(h'(z) - \widehat c_1(z)\big) \widehat m'(z) \widehat \psi(z)\ dz =  \int_{\underline x}^b \big(h'(z) + \widehat c_2(z)\big) \widehat m'(z) \widehat \psi(z)\ dz. 
\end{equation}

Since we are looking for a solution $(a^*,b^*)$ of \eqref{system-a} and \eqref{system-b} such that $a^*<b^*$, we can rewrite them in the form
\begin{align}
\int_a^b  \big(h'(z) - \widehat c_1(z)\big) \widehat m'(z) \widehat \phi(z)\ dz & =  \int_b^{\overline x}  \big(\widehat c_1(z) + \widehat c_2(z)\big) \widehat m'(z) \widehat \phi(z)\ dz, \label{first} \\
\int_a^b  \big(h'(z) + \widehat c_2(z)\big) \widehat m'(z) \widehat \psi(z)\ dz & =  - \int_{\underline x}^a  \big(\widehat c_1(z) + \widehat c_2(z)\big) \widehat m'(z) \widehat \psi(z)\ dz. \label{second}
\end{align}

\begin{prop}
\label{prop:existenceab}
Recall $\widetilde x_1$, $\widetilde x_2$ as in Assumption \ref{A2}-(ii). Then there exists a unique couple $(a^*,b^*) \in \mathcal{I}\times \mathcal{I}$ such that $a^* < \widetilde{x}_1 < \widetilde{x}_2 < b^*$ that solves the system of equations \eqref{first} and \eqref{second}.
\end{prop}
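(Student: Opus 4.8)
The plan is to reduce the coupled system to a single scalar equation and then exploit the strict sign structure encoded in Assumption \ref{A2}-(ii). Since we seek $a^*<b^*$, the system \eqref{first}--\eqref{second} is equivalent to \eqref{system-a}--\eqref{system-b}, i.e.\ to $A_1(a)=A_2(b)$ and $B_1(a)=B_2(b)$, where for $x\in\cI$
$$A_1(x):=\int_x^{\overline x}\!\big(h'-\widehat c_1\big)\widehat m'\widehat\phi\,dz,\quad A_2(x):=\int_x^{\overline x}\!\big(h'+\widehat c_2\big)\widehat m'\widehat\phi\,dz,$$
$$B_1(x):=\int_{\underline x}^x\!\big(h'-\widehat c_1\big)\widehat m'\widehat\psi\,dz,\quad B_2(x):=\int_{\underline x}^x\!\big(h'+\widehat c_2\big)\widehat m'\widehat\psi\,dz,$$
all finite thanks to \eqref{relation-resolvents}, \eqref{repr-c} and the representation \eqref{resolventhatrepr}. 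Using $\widehat m',\widehat\psi,\widehat\phi>0$ on $\cI$ together with the sign conditions on $h'-\widehat c_1$, $h'+\widehat c_2$ and $\widehat c_1+\widehat c_2$, one reads off: $A_1$ is strictly increasing on $(\underline x,\widetilde x_1)$ with $A_1(\widetilde x_1)>0$; $A_2$ is strictly decreasing on $(\widetilde x_2,\overline x)$ with $A_2(\widetilde x_2)>0$ and, by Assumption \ref{A3} and \eqref{psiphiproperties1bis}--\eqref{psiphiproperties2bis}, $A_2(\overline x^-)=0$; $B_1$ is strictly decreasing on $(\underline x,\widetilde x_1)$ with $B_1(\underline x^+)=0$; $B_2$ is strictly increasing on $(\widetilde x_2,\overline x)$ with $B_2(\widetilde x_2)<0$; and, splitting the integrals at $\widetilde x_1,\widetilde x_2$ and using $\widehat c_1+\widehat c_2<0$, one gets the orderings $0<A_2(\widetilde x_2)<A_1(\widetilde x_1)$ and $B_2(\widetilde x_2)<B_1(\widetilde x_1)<0$.

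Next I would invert the first equation in $a$: on the open set of $b\in(\widetilde x_2,\overline x)$ for which $A_2(b)$ lies in the range of the strictly increasing bijection $A_1|_{(\underline x,\widetilde x_1)}$, there is a unique $a=\alpha(b)\in(\underline x,\widetilde x_1)$ with $A_1(\alpha(b))=A_2(b)$; by the implicit function theorem $\alpha\in C^1$ with $\alpha'=A_2'/(A_1'\circ\alpha)<0$, so $\alpha$ is strictly decreasing. Solving the whole system is then equivalent to finding a zero of $\mathcal H(b):=B_2(b)-B_1(\alpha(b))$. The key computation uses the cancellation
$$\frac{B_1'(x)}{A_1'(x)}=-\frac{\widehat\psi(x)}{\widehat\phi(x)}=\frac{B_2'(x)}{A_2'(x)}$$
(the common factors $(h'(x)-\widehat c_1(x))\widehat m'(x)$, resp.\ $(h'(x)+\widehat c_2(x))\widehat m'(x)$, drop out), which gives
$$\mathcal H'(b)=B_2'(b)-B_1'(\alpha(b))\,\frac{A_2'(b)}{A_1'(\alpha(b))}=A_2'(b)\Big(\frac{\widehat\psi(\alpha(b))}{\widehat\phi(\alpha(b))}-\frac{\widehat\psi(b)}{\widehat\phi(b)}\Big)>0,$$
since $A_2'(b)<0$ on $(\widetilde x_2,\overline x)$ and $\widehat\psi/\widehat\phi$ is strictly increasing (its derivative equals $w\,\widehat{S}'/\widehat\phi^{2}>0$, cf.\ \eqref{Wronskian}), while $\alpha(b)<\widetilde x_1<\widetilde x_2<b$. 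Hence $\mathcal H$ is continuous and strictly increasing on its domain, so it has \emph{at most} one zero, which already yields uniqueness of $(a^*,b^*)$.

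For existence one checks the sign of $\mathcal H$ at the two ends of its domain. As $b\downarrow\widetilde x_2$, $\alpha(b)\to a_0:=A_1^{-1}(A_2(\widetilde x_2))\in(\underline x,\widetilde x_1)$ and
$$\mathcal H(\widetilde x_2^+)=B_2(\widetilde x_2)-B_1(a_0)=\int_{a_0}^{\widetilde x_2}\!\big(h'+\widehat c_2\big)\widehat m'\widehat\psi\,dz+\int_{\underline x}^{a_0}\!\big(\widehat c_1+\widehat c_2\big)\widehat m'\widehat\psi\,dz<0,$$
both integrands being negative on the respective intervals. As $b$ tends to the right endpoint of the domain, $\alpha(b)$ tends to a point of $[\underline x,\widetilde x_1)$ where $B_1\le 0$, and one shows $\mathcal H$ tends to a strictly positive limit (or to $+\infty$) from the boundary behaviour of $B_2$; the strictly increasing continuous function $\mathcal H$ therefore has a unique zero $b^*\in(\widetilde x_2,\overline x)$, and $a^*:=\alpha(b^*)\in(\underline x,\widetilde x_1)$ satisfies $A_1(a^*)=A_2(b^*)$, $B_1(a^*)=B_2(b^*)$ — equivalently \eqref{first}--\eqref{second} — with $a^*<\widetilde x_1<\widetilde x_2<b^*$.

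The main obstacle is precisely this last step: proving that the domain of $\mathcal H$ is non-empty and reaches the boundary of $(\widetilde x_2,\overline x)$, and that $\mathcal H$ is positive there. This forces one to control the sign (or the divergence) of the ``global'' quantities $A_1(\underline x^+)=\int_{\underline x}^{\overline x}(h'-\widehat c_1)\widehat m'\widehat\phi\,dz$ and $B_2(\overline x^-)=\int_{\underline x}^{\overline x}(h'+\widehat c_2)\widehat m'\widehat\psi\,dz$, which must be extracted from Assumption \ref{A3}, the resolvent representations \eqref{resolventhatrepr}, \eqref{repr-c}, and the natural-boundary relations \eqref{psiphiproperties1bis}--\eqref{psiphiproperties2bis}. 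Everything else — the monotonicity of $A_i,B_i$, the existence of $\alpha$, and the clean formula for $\mathcal H'$ via the Wronskian-type cancellation — is elementary calculus once the sign pattern of Assumption \ref{A2}-(ii) is in place.
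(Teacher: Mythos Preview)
Your reduction is essentially the paper's: your scalar function $\mathcal H(b)=B_2(b)-B_1(\alpha(b))$ coincides with the paper's $\Theta(b)$ (just split the integral at $\alpha(b)=x^*(b)$), and both proofs check $\mathcal H(\widetilde x_2^+)<0$ in the same way. The genuine difference is in \emph{uniqueness}: you get it directly from the clean Wronskian identity $\mathcal H'(b)=A_2'(b)\big(\widehat\psi(\alpha(b))/\widehat\phi(\alpha(b))-\widehat\psi(b)/\widehat\phi(b)\big)>0$, whereas the paper does not compute $\Theta'$ and instead runs a separate Step~2, comparing the implicit-function slopes $(x^*)'(b^*)$ and $(y^*)'(a^*)$ at an intersection and invoking the same monotonicity of $\widehat\psi/\widehat\phi$. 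Your packaging is tighter and immediately gives both uniqueness and strict monotonicity of $\mathcal H$.

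The gap you flag is real but easy to close with the paper's device. The point you need is that $A_1(\underline x^+)=-\infty$ and $B_2(\overline x^-)=+\infty$; this follows from \eqref{psiphiproperties3} together with the natural-boundary limits \eqref{psiphiproperties2bis} (see the estimates \eqref{stimaK1}--\eqref{stimaK2} in the paper, which are exactly $A_1(a)-A_1(b)\to-\infty$ and $B_2(b)-B_2(a)\to+\infty$). Then the range of $A_1|_{(\underline x,\widetilde x_1)}$ is $(-\infty,A_1(\widetilde x_1))$, which contains $(0,A_2(\widetilde x_2))\ni A_2(b)$ for every $b\in(\widetilde x_2,\overline x)$, so $\alpha$ is defined on the whole interval; and as $b\uparrow\overline x$ one has $A_2(b)\to 0$, $\alpha(b)\to A_1^{-1}(0)\in(\underline x,\widetilde x_1)$, $B_1(\alpha(b))$ bounded, and $B_2(b)\to+\infty$, giving $\mathcal H(b)\to+\infty$. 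With this, your intermediate-value argument goes through and the proof is complete.
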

\begin{proof}
\emph{Step 1.} We start by proving existence. Given Assumption \ref{A2}, note that the right-hand sides of \eqref{first} and \eqref{second} are strictly negative and strictly positive, respectively. For $a,b \in \cI$ define the two functionals
\begin{align*}
K_1(a;b) & :=   \int_a^b \big(h'(z) - \widehat c_1(z)\big) \widehat m'(z) \widehat \phi(z)\ dz, \\
K_2(b;a) & :=   \int_a^b \big(h'(z) + \widehat c_2(z)\big) \widehat m'(z) \widehat \psi(z)\ dz.
\end{align*}
For a given and fixed $a \in \cI$, let $b > a \lor \widetilde{x}_2$ and notice that by the integral mean-value theorem there exists $\xi_2 \in (a \lor \widetilde{x}_2,y)$ such that 
\begin{align}
\label{stimaK2}
K_2(b;a) & = K_2(a \lor \widetilde{x}_2;a) + \int_{a \lor \widetilde{x}_2}^b  \big(h'(z) + \widehat c_2(z)\big) \widehat m'(z) \widehat \psi(z)\ dz \nonumber \\
& = K_2(a \lor \widetilde{x}_2;a) +  \big(h'(\xi_2) + \widehat c_2(\xi_2)\big) \int_{a \lor \widetilde{x}_2}^b \frac{r - \mu'(z)}{r - \mu'(z)}  \widehat m'(z) \widehat \psi(z)\ dz  \\
& \geq K_2(a \lor \widetilde{x}_2;a) + \big(h'(\xi_2) + \widehat c_2(\xi_2)\big) \cdot \frac{1}{r + L} \left[ \frac{\widehat \psi'(b)}{\widehat S'(b)} - \frac{\widehat \psi'(a \lor \widetilde{x}_2)}{\widehat S'(a \lor \widetilde{x}_2)} \right],\nonumber
\end{align}
where in the last step we have used \eqref{psiphiproperties3}, the fact that $h'(\xi_2) + \widehat c_2(\xi_2) > 0$, as well as that $- L \leq \mu'(\,\cdot\,) < r$ by Assumptions \ref{A1} and \ref{ass:rate}. Because of \eqref{psiphiproperties2bis}, and again since $h'(\xi_2) + \widehat c_2(\xi_2) > 0$, we obtain from \eqref{stimaK2} that $\lim_{b \uparrow \overline x}K_2(b;a) = + \infty$, for any given $a \in \cI$. 

On the other hand, by Assumption \ref{A2} one has
\begin{align*}
K_2(\widetilde{x}_2;a) & = \int_a^{\widetilde{x}_2} \big(h'(z) + \widehat c_2(z)\big) \widehat m'(z) \widehat \psi(z)\ dz \left\{ \begin{array}{ll}
< 0 & \mbox{ if } a < \widetilde{x}_2, \\
\leq 0 & \mbox{ if } a \geq \widetilde{x}_2.\\
\end{array} \right. 
\end{align*}
Also, $K_2(a;a)=0$ and $K'_2(b;a) = \big(h'(b) + \widehat c_2(b)\big) \widehat m'(b) \widehat \psi(b) > 0$ for $b> a \lor \widetilde{x}_2$. Hence, for any given $a \in \cI$, by continuity and strict monotonicity of $b \mapsto K_2(b;a)$ on $(a \lor \widetilde{x}_2,\overline x)$, there exists a unique $y^*(a) \in (a \lor \widetilde{x}_2,\overline x)$ such that \eqref{second} is satisfied. 

Analogously, for fixed $b\in \cI$, take $a < \widetilde x_1 \land b$, and for a suitable $\xi_1 \in (a, \widetilde x_1 \land b)$ one finds
\begin{align}
\label{stimaK1}
K_1(a;b) & \leq K_1(\widetilde{x}_1 \land b;b) +  \big(h'(\xi_1) - \widehat c_1(\xi_1)\big) \cdot \frac{1}{r + L} \left[\frac{\widehat \phi'(\widetilde{x}_1 \land b)}{\widehat S'(\widetilde{x}_1 \land b)} - \frac{\widehat \phi'(a)}{\widehat S'(a)} \right].
\end{align}
We thus conclude that, for any given and fixed $b \in \cI$, $\lim_{a \downarrow \underline x}K_1(a;b) = - \infty$, since $\underline x$ is natural for $\widehat X$ (cf.\ \eqref{psiphiproperties2bis}) and $h'(\xi_1) - \widehat c_1(\xi_1)<0$. On the other hand, $K_1(b;b) = 0$,
$$ K_1(\widetilde{x}_1;b) = \int_{\widetilde{x}_1}^b \big(h'(z) - \widehat c_1(z)\big) \widehat m'(z) \widehat \phi(z)\ dz = \left\{ \begin{array}{ll}
> 0 & \mbox{ if } b > \widetilde{x}_1, \\
\geq 0 & \mbox{ if } b \leq \widetilde{x}_1,\\
\end{array} \right. $$
and $K'_1(a;b)=-\big(h'(a) + \widehat c_1(a)\big) \widehat m'(a) \widehat \phi(a) < 0$ for $a < \widetilde{x}_1 \land b$. Combining all these facts we find that for any $b \in \cI$ there exists a unique $x^*(b) \in (\underline x, \widetilde{x}_1 \land b)$ such that \eqref{first} is satisfied. Since $\widetilde{x}_1 < \widetilde{x}_2$ by assumption, we clearly have that if a pair $(a^*,b^*) \in \cI \times \cI$ such that $a^* := x^*(b^*)$ and $b^* = y^*(a^*)$ exists, then $a^* < \widetilde{x}_1 < \widetilde{x}_2 < b^*$. 


In order to prove that there indeed exists such a couple $(a^*,b^*)$, let
$$ \Theta(b) := \int_{x^*(b)}^b  \big(h'(z) + \widehat c_2(z)\big) \widehat m'(z) \widehat \psi(z)\ dz + \int_{\underline x}^{x^*(b)} \big(\widehat c_1(z) + \widehat c_2(z)\big) \widehat m'(z) \widehat \psi(z)\ dz, $$
and notice that because $x^*(\widetilde{x}_2) < \widetilde{x}_1< \widetilde{x}_2$ one has by Assumption \ref{A2}
$$ \Theta(\widetilde{x}_2) =  \int_{x^*( \widetilde{x}_2)}^{ \widetilde{x}_2}  \big(h'(z) + \widehat c_2(z)\big) \widehat m'(z) \widehat \psi(z)\ dz + \int_{\underline x}^{x^*( \widetilde{x}_2)} \big(\widehat c_1(z) + \widehat c_2(z)\big) \widehat m'(z) \widehat \psi(z)\ dz < 0. $$

On the other hand, for $b > \widetilde{x}_2$ and for a suitable $\xi_2 \in (\widetilde{x}_2,b)$ we have by the integral mean-value theorem
\begin{align*}
\Theta(b) > & \int_{x^*(b)}^{ \widetilde{x}_2}  \big(h'(z) + \widehat c_2(z)\big) \widehat m'(z) \widehat \psi(z)\ dz + \int_{ \widetilde{x}_2}^b \big(h'(z) + \widehat c_2(z)\big) \widehat m'(z) \widehat \psi(z)\ dz  \\
&   + \int_{\underline x}^{\widetilde{x}_2} \big(\widehat c_1(z) + \widehat c_2(z)\big) \widehat m'(z) \widehat \psi(z)\ dz  \\
\geq &  \int_{\underline x}^{ \widetilde{x}_2}  \big(h'(z) + \widehat c_2(z)\big) \widehat m'(z) \widehat \psi(z)\ dz + \int_{\underline x}^{\widetilde{x}_2} \big(\widehat c_1(z) + \widehat c_2(z)\big) \widehat m'(z) \widehat \psi(z)\ dz   \\
&    + \big(h'(\xi_2) + \widehat c_2(\xi_2)\big) \int_{ \widetilde{x}_2}^b \widehat m'(z) \frac{r - \mu'(z)}{r - \mu'(z)} \widehat \psi(z)\ dz  \\
\geq &  \int_{\underline x}^{ \widetilde{x}_2}  \big(h'(z) + \widehat c_2(z)\big) \widehat m'(z) \widehat \psi(z)\ dz + \int_{\underline x}^{\widetilde{x}_2} \big(\widehat c_1(z) + \widehat c_2(z)\big) \widehat m'(z) \widehat \psi(z)\ dz   \\
&  + \big(h'(\xi_2) + \widehat c_2(\xi_2)\big) \frac{1}{r + L} \left[\frac{\widehat \psi'(b)}{\widehat S'(b)} - \frac{\widehat \psi'( \widetilde{x}_2)}{\widehat S'( \widetilde{x}_2)} \right]. 
\end{align*}
Here we have used that $\widehat{c}_1 + \widehat{c}_2 < 0$ on $\cI$ by Assumption \ref{A2}, equation \eqref{psiphiproperties3}, as well as that $- L \leq \mu'(\,\cdot\,) < r$ by Assumptions \ref{A1} and \ref{ass:rate}. Because of \eqref{psiphiproperties2bis}, and since $h'(\xi_2) + \widehat c_2(\xi_2)>0$, we find from the previous equation that $\lim_{b \uparrow \overline x}\Theta(b) = + \infty$. Also, $b \mapsto \Theta(b)$ is continuous (since $b \mapsto x^*(b)$ is so by the implicit function theorem), and therefore there exists $b^*\in \cI$ solving $\Theta(b) = 0$, and this value is such that $b^* > \widetilde{x}_2$. Hence, there also exists $a^* = x^*(b^*)$ such that $(a^*,b^*) \in (\underline x, \widetilde{x}_1) \times (\widetilde{x}_2,\overline x)$ solves system \eqref{first}--\eqref{second}. 
\vspace{0.25cm}

\emph{Step 2.} We now prove that the couple $(a^*,b^*)$ is indeed unique in the domain $(\underline x, \widetilde{x}_1)\times(\widetilde{x}_2,\overline x)$. For $x^*(b)$, $b \in \cI$, and $y^*(a)$, $a \in \cI$, the implicit function theorem gives
\begin{align*}
(x^*)'(b) & =  \frac{h'(b) + \widehat c_2(b)}{h'(x^*(b)) - \widehat c_1(x^*(b))} \cdot \frac{\widehat m'(b) \widehat \phi(b)}{\widehat m'(x^*(b)) \widehat \phi(x^*(b))}, \\
(y^*)'(a) & =  \frac{h'(a) - \widehat c_1(a)}{h'(y^*(a)) + \widehat c_2(y^*(a))} \cdot \frac{\widehat m'(a) \widehat \psi(a)}{\widehat m'(y^*(a) \widehat \psi(y^*(a))}.
\end{align*}
Since we already know by \emph{Step 1} that there exists a solution $(a^*,b^*)$ belonging to $(\underline x, \widetilde{x}_1) \times (\widetilde{x}_2,\overline x)$, we can study the sign of the previous derivatives in those intervals. By Assumption \ref{A2} we have
$$ (x^*)'(b) < 0 \mbox{ for } b \in (\widetilde{x}_2,\overline x) \qquad \mbox{ and } \qquad (y^*)'(a) < 0  \mbox{ for } a \in (\underline x, \widetilde{x}_1), $$
upon recalling that $x^*(b) < \widetilde{x}_1$ and $y^*(a) > \widetilde{x}_2$. Moreover, at the intersection points $b^* = y^*(a^*)$ and $a^* = x^*(b^*)$, we have
\begin{align*}
(x^*)'(b^*) & =  \frac{h'(b^*) + \widehat c_2(b^*)}{h'(a^*) - \widehat c_1(a^*)} \cdot \frac{\widehat m'(b^*) \widehat \psi(b^*)}{\widehat m'(a^*) \widehat \psi(a^*)} \cdot \frac{\widehat \psi(a^*)}{\widehat \psi(b^*)} \cdot \frac{\widehat \phi(b^*)}{\widehat \phi(a^*)}  \\
& =  \frac{1}{(y^*)'(a^*)} \cdot \frac{\widehat \psi(a^*)}{\widehat \psi(b^*)} \cdot \frac{\widehat \phi(b^*)}{\widehat \phi(a^*)}.
\end{align*}
Since $(y^*)'(a^*) < 0$ and $\frac{\widehat \psi(a^*)}{\widehat \psi(b^*)} \cdot \frac{\widehat \phi(b^*)}{\widehat \phi(a^*)} < 1$ (by the strict monotonicity of $\widehat \psi$ and $\widehat \phi$), we obtain that 
$(x^*)'(b^*) >  \frac{1}{(y^*)'(a^*)}$, or equivalently
$$ (y^*)'(a^*) >  \frac{1}{(x^*)'(b^*)} = [(x^*)^{-1}]^{'}(a^*). $$
Together with the strict monotonicity of $x^*(\,\cdot\,)$ and $y^*(\,\cdot\,)$ in  $(\underline x, \widetilde{x}_1)$ and $(\widetilde{x}_2,\overline x)$, respectively, the latter shows that the intersection point is indeed unique. 
\end{proof}

Now, with $(A,B,a^*,b^*)$ as above, we define our candidate value function as
\begin{equation}
\label{u-candidate}
u(x) := \left\{ \begin{array}{ll}
\displaystyle A \psi(a^*) + B \phi(a^*) + (R h)(a^*) + \int_x^{a^*} c_1(y)\ dy, & x \in (\underline x, a^*], \\
\\
A \psi(x) + B \phi(x) + (R h)(x), & x \in (a^*, b^*), \\
\\
\displaystyle A \psi(b^*) + B \phi(b^*) + (R h)(b^*) + \int_{b^*}^x c_2(y)\ dy, & x \in [b^*,\overline x). \\
\end{array} \right. 
\end{equation}


\subsection{The Value Function and the Optimal Control}
\label{sec:optimal}

In this section we prove that the function $u$ constructed in Section \ref{sec:construct} (cf.\ \eqref{u-candidate} above) coincides with the value function \eqref{value}, and we provide the optimal control ${\nu}^{\star}$.

\begin{thm}
\label{uHJB}
The function $u$ defined in \eqref{u-candidate} is a classical solution to the HJB equation \eqref{HJB}. Moreover, there exists $K>0$ such that $|u(x)| \leq K\big(1 + |x|^{\gamma + 1}\big)$, where $\gamma \geq 1$ is the growth coefficient of $c_i$, $i=1,2$ (see Assumption \ref{A2}-(ii)). 
\end{thm}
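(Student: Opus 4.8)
The plan is to establish the two assertions directly, working on the three intervals $(\underline x,a^*]$, $(a^*,b^*)$, $[b^*,\overline x)$ on which $u$ is defined by distinct formulas. On each of these intervals $u$ is as smooth as the data permits: $\psi$, $\phi$ and $Rh$ solve a second-order linear ODE with $C^1$ coefficients, hence are at least $C^2$, and $c_i\in C^2(\cI)$. The $C^2$-pasting at $a^*$ and $b^*$ is precisely what the smooth-fit equations \eqref{1}--\eqref{4} encode: using \eqref{repr-c} one has $-c_1=\widehat R\,\widehat c_1$ and $-c_1'=(\widehat R\,\widehat c_1)'$, so \eqref{1}--\eqref{2} say that $u'$ and $u''$ computed from the middle formula agree at $a^*$ with the values $-c_1(a^*)$ and $-c_1'(a^*)$ coming from the left formula, and symmetrically \eqref{3}--\eqref{4} at $b^*$. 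Since we have set $A:=I_1(a^*)$, $B:=J_1(a^*)$ and, by Proposition \ref{prop:existenceab}, $(a^*,b^*)$ is chosen so that also $A=I_2(b^*)$ and $B=J_2(b^*)$, all of \eqref{1}--\eqref{4} hold, whence $u\in C^2(\cI)$. Moreover, by construction $(\cL_X-r)u+h\equiv0$ on $(a^*,b^*)$ (cf.\ \eqref{resolventsODE}), $u'+c_1\equiv0$ on $(\underline x,a^*]$, and $c_2-u'\equiv0$ on $[b^*,\overline x)$, so in every region one entry of the minimum in \eqref{HJB} already vanishes; it remains to check nonnegativity of the other two entries.

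The recurring tool is the elementary identity $\tfrac{d}{dx}\big[(\cL_X-r)w\big](x)=\big(\cL_{\X}-(r-\mu'(x))\big)w'(x)$ for $w\in C^3$, obtained by differentiating \eqref{eq:LX} and comparing with \eqref{eq:LXhat}. On $(\underline x,a^*]$, where $u'=-c_1$, one has $c_2-u'=c_1+c_2>0$ by Assumption \ref{A2}; and with $G:=(\cL_X-r)u+h$ the identity gives $G'=-\widehat c_1+h'$, which by Assumption \ref{A2}-(ii) is $<0$ on $(\underline x,a^*)$ because $a^*<\widetilde x_1$, while $G(a^*)=0$ by the $C^2$-fit and the middle-region ODE; hence $G\ge0$ there. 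The case $[b^*,\overline x)$ is symmetric: $u'+c_1=c_1+c_2>0$, and $G'=\widehat c_2+h'>0$ on $(b^*,\overline x)$ since $b^*>\widetilde x_2$, with $G(b^*)=0$, so $G\ge0$.

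The substantive case is the middle region, where one must show $-c_1\le u'\le c_2$. Set $P:=u'+c_1$ and $Q:=c_2-u'$; differentiating $(\cL_X-r)u+h=0$, the identity together with $\widehat c_i=(\cL_{\X}-(r-\mu'))c_i$ gives $(\cL_{\X}-(r-\mu'))P=\widehat c_1-h'$ and $(\cL_{\X}-(r-\mu'))Q=\widehat c_2+h'$. By Assumption \ref{A2}-(ii) and $a^*<\widetilde x_1<\widetilde x_2<b^*$ (Proposition \ref{prop:existenceab}), $\widehat c_1-h'$ is $>0$ on $(a^*,\widetilde x_1)$ and $<0$ on $(\widetilde x_1,b^*)$. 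Since $u'=-c_1$ on $(\underline x,a^*]$, $P\equiv0$ there, so $P(a^*)=P'(a^*)=0$, and evaluating the ODE at $a^*$ gives $P''(a^*+)=\tfrac{2}{\sigma^2(a^*)}\big(\widehat c_1(a^*)-h'(a^*)\big)>0$; hence $P>0$ just to the right of $a^*$, while $P(b^*)=c_1(b^*)+c_2(b^*)>0$ by the $C^2$-fit. Now a maximum principle (the potential $-(r-\mu')$ being negative by Assumption \ref{ass:rate}) closes the argument: on $(a^*,\widetilde x_1)$, $(\cL_{\X}-(r-\mu'))P>0$ forbids a positive interior maximum of $P$, which with $P(a^*)=0$ and positivity near $a^*$ forces $P>0$ on $(a^*,\widetilde x_1]$; on $(\widetilde x_1,b^*)$, $(\cL_{\X}-(r-\mu'))P<0$ forbids a nonpositive interior minimum, which with $P(\widetilde x_1)>0$ and $P(b^*)>0$ forces $P>0$ on $(\widetilde x_1,b^*)$. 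Thus $P>0$ on $(a^*,b^*)$; the argument for $Q$ is the mirror image, using $Q\equiv0$ on $[b^*,\overline x)$, $Q(a^*)=c_1(a^*)+c_2(a^*)>0$, and the sign of $\widehat c_2+h'$ about $\widetilde x_2$. This completes the verification of \eqref{HJB} on all of $\cI$.

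For the growth bound: on the compact $[a^*,b^*]$ the continuous function $u$ is bounded; on $(\underline x,a^*]$, $u(x)=u(a^*)+\int_x^{a^*}c_1(y)\,dy$ and $|c_1(y)|\le K_1(1+|y|^\gamma)$ give $|u(x)|\le\text{const}\,(1+|x|^{\gamma+1})$, and likewise on $[b^*,\overline x)$ with $c_2$; taking $K$ the largest of the three constants yields $|u(x)|\le K(1+|x|^{\gamma+1})$. I expect the only genuinely delicate point to be the middle-region estimate $-c_1\le u'\le c_2$: it requires simultaneously exploiting the smooth-fit data at $a^*$ and $b^*$, the exact position of the zeros of $\widehat c_1-h'$ and $\widehat c_2+h'$ relative to $(a^*,b^*)$ — which is where the ordering $a^*<\widetilde x_1<\widetilde x_2<b^*$ supplied by Proposition \ref{prop:existenceab} is indispensable — and a maximum principle applied separately on the two subintervals cut out by $\widetilde x_1$ (respectively $\widetilde x_2$).
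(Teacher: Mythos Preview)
Your proof is correct. For the outer regions and the growth bound you argue essentially as the paper does: the paper's Step~2 is precisely the integrated form of your identity $G' = -\widehat c_1 + h'$ (their integration-by-parts computation, together with the explicit rewriting \eqref{arrange-final} of $u(a^*)$, unwinds exactly that derivative relation combined with $G(a^*)=0$).

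The genuine difference is in the middle region. The paper proves $u'+c_1\ge 0$ on $(a^*,b^*)$ by a computational route: it represents $u'$ via Feynman--Kac, introduces $\widehat w_1:=(u'+c_1)/\widehat\phi$, expands $\widehat w_1'$ through the Green-function representation \eqref{resolventhatrepr}, and then uses that $(a^*,b^*)$ satisfy \eqref{system-a}--\eqref{system-b} to collapse $\widehat w_1'$ to an explicit integral $-\tfrac{1}{w}\widehat F'(x)\int_{a^*}^x(h'-\widehat c_1)\widehat\phi\,\widehat m'\,dz$, whose sign is then analyzed via a case distinction. Your maximum-principle argument is more elementary: it avoids the resolvent machinery and the case split, using only the second-order ODE for $P=u'+c_1$, the sign change of its source $\widehat c_1-h'$ at $\widetilde x_1$, the negativity of the potential $-(r-\mu')$, and the boundary data supplied by the $C^2$-fit. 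What the paper's approach buys is an explicit formula for the derivative of $(u'+c_1)/\widehat\phi$, potentially useful for finer quantitative statements; what your approach buys is brevity and robustness, and it makes transparent that the ordering $a^*<\widetilde x_1<\widetilde x_2<b^*$ from Proposition~\ref{prop:existenceab} is the decisive structural input.
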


\begin{proof}
The proof is organized in several steps.
\vspace{0.25cm}

\emph{Step 1.} By construction, $u \in C^2(\underline x, \overline x)$. Moreover, using the growth requirement on $c_i$, $i=1,2$, of Assumption \ref{A2}-(ii), one obtains from \eqref{u-candidate} that there exists $K>0$ such that for all $x \in \cI$
$$|u(x)| \leq K\bigg(1 + \int_{x \land a^*}^{a^*} |c_1(y)|\ dy + \int_{b^*}^{x \lor b^*} |c_2(y)|\ dy \bigg) \leq K \big( 1 + |x|^{\gamma + 1}\big).$$
\vspace{0.25cm}

\emph{Step 2.} We here show that $\big(\cL_X - r\big)u(x) + h(x) \geq 0$ for all $x \in \cI$. This is clearly true with equality by construction for $x \in (a^*,b^*)$, and we now prove that it also holds for $x \in (\underline{x}, a^*]$. Analogous arguments might then be employed also to show that $\big(\cL_X - r\big)u(x) + h(x) \geq 0$ for $x \in [b^*,\overline{x})$. 

First of all we rewrite $A \psi(a^*) + B \phi(a^*) + (Rh)(a^*)$ in a tighter form. 
Notice that since $- \widehat \phi = \phi'$ and $\widehat \psi = \psi'$ by Lemma \ref{lem:AM} in the appendix, and because $(\widehat R h') = (R h)'$, we can obtain from \eqref{Aa}--\eqref{Ba}
\begin{align*}
& {A \psi(a^*) + B \phi(a^*) = \frac{1}{\phi''(a^*) \psi'(a^*) - \phi'(a^*) \psi''(a^*)} \cdot} \\
& \cdot  \Big[ \psi(a^*) \phi'(a^*) (R h)''(a^*) - \psi(a^*) \phi'(a^*) (\widehat R\, \widehat c_1)'(a^*) - \psi(a^*) \phi''(a^*) (R h)'(a^*) + \\
&    + \psi(a^*) \phi''(a^*) (\widehat R\, \widehat c_1)(a^*) + \phi(a^*) \psi''(a^*) (R h)'(a^*) - \phi(a^*) \psi''(a^*) (\widehat R\, \widehat c_1)(a^*) - \\
&    - \phi(a^*) \psi'(a^*) (R_r h)''(a^*) + \phi(a^*) \psi'(a^*) (\widehat R\, \widehat c_1)'(a^*)\Big].
\end{align*}
Then arranging terms and noticing that $WS''(x) = \phi(x)\psi''(x) - \psi(x)\phi''(x)$, $x \in \cI$, we find from the previous equation that
\begin{align}
\label{arrange1}
& {A \psi(a^*) + B \phi(a^*) = \frac{W}{\phi''(a^*) \psi'(a^*) - \phi'(a^*) \psi''(a^*)} \cdot} \\
& \cdot  \Big[ -S'(a^*)\Big((R h)''(a^*) - (\widehat R\, \widehat c_1)'(a^*)\Big) + S''(a^*)\Big((R h)'(a^*) - (\widehat R\, \widehat c_1)(a^*)\Big) \Big]. \nonumber
\end{align}
Then using that $S''(x) = -\frac{2\mu(x)}{\sigma^2(x)}S'(x)$, and that $f''(x) = -\frac{2\mu(x)}{\sigma^2(x)}f'(x) + \frac{2r}{\sigma^2(x)}f(x)$ for $f=\phi,\psi$, we obtain from \eqref{arrange1}
\begin{align}
\label{arrange2}
& {A \psi(a^*) + B \phi(a^*) = - \frac{1}{r} \cdot} \\
& \cdot  \Big[ \frac12 \sigma^2(a^*) (R h){''}(a^*) + \mu(a^*)(R h){'}(a^*) - \Big( \frac12 \sigma^2(a^*)(\widehat R\, \widehat c_1)'(a^*) + \mu(a^*)(\widehat R\, \widehat c_1)(a^*)\Big) \Big]. \nonumber
\end{align}
Since the resolvent satisfies (cf.\ \eqref{resolventsODE})
$$ \frac12 \sigma^2(x) (R h){''}(x) + \mu(x)(R h){'}(x) = r (Rh)(x) - h(x), \quad x \in \cI,$$
and because $(\widehat R\, \widehat c_1) = - c_1$ by \eqref{repr-c}, we conclude by simple algebra from \eqref{arrange2} that
\begin{equation}
\label{arrange-final}
A \psi(a^*) + B \phi(a^*) + (R h)(a^*) = \frac{1}{r} \left[ h(a^*) - \mu(a^*) c_1(a^*) - \frac12 \sigma^2(a^*) c_1'(a^*) \right].
\end{equation}
Hence (cf.\ \eqref{u-candidate})
\begin{equation}
\label{nuovau}
u(x) = \int_x^{a^*} c_1(y)\ dy +  \frac{1}{r} \Big[ h(a^*) - \mu(a^*) c_1(a^*) - \frac12 \sigma^2(a^*) c_1'(a^*) \Big], \quad x \in (\underline x, a^*].
\end{equation}

Thanks to \eqref{nuovau} we can now easily check that $\big(\cL_X - r) u + h \geq 0$ on $(\underline{x}, a^*]$. Indeed, since $a^* < \widetilde{x}_1$, we have for any $x \leq a^*$ that
$$ -\int_x^{a^*} \Big(h'(y) - \big(\cL_{\X} - (r - \mu'(y))\big) c_1(y)\Big)\ dy \geq 0. $$
However, an integration by parts yields
\begin{align*}
0 \leq & \ - \int_x^{a^*} \Big(h'(y) - \big(\cL_{\X} - (r - \mu'(y))\big) c_1(y)\Big)\ dy \\
= & \ h(x) - h(a^*) + \frac12 \sigma^2(a^*) c_1'(a^*) -  \frac12 \sigma^2(x) c_1'(x)  \\
&   + \mu(a^*) c_1(a^*) - \mu(x) c_1(x) - r \int_x^{a^*} c_1(y)\ dy  \\
= & \ h(x) -  \frac12 \sigma^2(x) c_1'(x) - \mu(x) c_1(x) - r \int_x^{a^*} c_1(y)\ dy  \\
&  - r\Big[\frac{1}{r}\Big(h(a^*) - \frac12 \sigma^2(a^*) c_1'(a^*) - \mu(a^*) c_1(a^*) \Big)\Big]  \\
= & \ \big(\cL_X - r\big) u(x) + h(x)
\end{align*}
on $(\underline x,a^*]$, upon recalling \eqref{nuovau} in the last step. Hence, $\big(\cL_X - r\big) u(x) + h(x) \geq 0$ on $(\underline x,a^*]$.
\vspace{0.25cm}

\emph{Step 3.} To conclude the proof it remains to show that we have $- c_1 \leq u' \leq c_2$ on $(a^*,b^*)$, since we already know by construction that $u' = - c_1$ on $(\underline x,a^*]$ and $u' = c_2$ on $[b^*,\overline x)$. We here prove that $u' \geq -c_1$ on $(a^*,b^*)$. Arguments similar to those employed in the following allow to show that also $u' \leq c_2$ on $(a^*,b^*)$.

By construction, the function $u$ of \eqref{u-candidate} solves $(\cL_X - r)u = - h$ on $(a^*,b^*)$. Given the regularity of $u$, and of $\mu$ and $\sigma$ (cf.\ Assumption \ref{A1}), we can differentiate the previous equation with respect to $x$ inside $(a^*,b^*)$, and find that $u'$ solves 
$$ \big(\cL_{\X}-(r-\mu'(x))\big)u'(x) = - h'(x), \qquad x \in (a^*,b^*),$$
together with the boundary conditions $u'(a^*) = - c_1(a^*)$ and $u'(b^*)=c_2(b^*)$.

Now, take $x \in (a^*,b^*)$, and define the two stopping times
$$ \tau_{a^*} := \inf\{ t \geq 0\ |\ \widehat X_t \leq a^*\}, \qquad \tau_{b^*} := \inf\{ t \geq 0\ |\ \widehat X_t \geq b^*\},\quad  \widehat{\P}_x-\text{a.s.}$$
Then, by the Feynman-Kac formula and the strong Markov property of $\X$, we can write
\begin{align*}
u'(x) = & \widehat \E_x\bigg[ e^{- \int_0^{\tau_{a^*} \land \tau_{b^*}} (r - \mu'(\widehat X_u)) du} w(\widehat X_{\tau_{a^*} \land \tau_{b^*}}) + \int_0^{\tau_{a^*} \land \tau_{b^*}} e^{- \int_0^s (r - \mu'(\widehat X_u)) du}\, h'(\X_s)\ ds \bigg] \\
= & (\widehat R h')(x) - \widehat \E_x\Big[ e^{- \int_0^{\tau_{a^*}} (r - \mu'(\widehat X_u)) du} (\widehat R h' + c_1)(\widehat X_{\tau_{a^*}}) \1_{\{\tau_{a^*} < \tau_{b^*}\}} \Big]  \\
&    - \widehat \E_x\Big[ e^{- \int_0^{\tau_{b^*}} (r - \mu'(\widehat X_u)) du} (\widehat R h' - c_2)(\widehat X_{\tau_{b^*}}) \1_{\{\tau_{a^*} > \tau_{b^*}\}} \Big] \\
= & (\widehat R h')(x) -  (\widehat R h' + c_1)(a^*) \widehat \E_x\Big[ e^{- \int_0^{\tau_{a^*}} (r - \mu'(\widehat X_u)) du} \1_{\{\tau_{a^*} < \tau_{b^*}\}} \Big]  \\
&    - (\widehat R h' - c_2)(b^*)  \widehat \E_x\Big[ e^{- \int_0^{\tau_{b^*}} (r - \mu'(\widehat X_u)) du} \1_{\{\tau_{a^*} > \tau_{b^*}\}} \Big],
\end{align*}
upon recalling \eqref{resolventhat}. Because the function $\widehat F(x) := \frac{\widehat \psi(x)}{\widehat \phi(x)}$, $x \in \cI$, is strictly positive and strictly increasing (by strict monotonicity of $\widehat{\psi}$ and $\widehat{\phi}$), we can apply Lemma 2.3 in \cite{Dayanik08} to evaluate the last two expectations above, and then to write that   
\begin{align*}
\frac{u'(x)}{\widehat \phi(x)} & =  \frac{(\widehat R h')(x)}{\widehat \phi(x)} - \frac{(\widehat R h')(a^*) + c_1(a^*)}{\widehat \phi(a^*)} \cdot \left[\frac{\widehat F(b^*) - \widehat F(x)}{\widehat F(b^*) - \widehat F(a^*)}\right] \\
&  - \frac{(\widehat R h')(b^*) - c_2(b^*)}{\widehat \phi(b^*)} \cdot \left[\frac{\widehat F(x) - \widehat F(a^*)}{\widehat F(b^*) - \widehat F(a^*)}\right].
\end{align*}
The latter equation immediately implies that
\begin{align}
\label{uc1-1}
\widehat{w}_1(x):=\frac{u'(x) + c_1(x)}{\widehat \phi(x)} = & \frac{(\widehat R h')(x) + c_1(x)}{\widehat \phi(x)} - \frac{(\widehat R h')(a^*) + c_1(a^*)}{\widehat \phi(a^*)} \cdot \left[\frac{\widehat F(b^*) - \widehat F(x)}{\widehat F(b^*) - \widehat F(a^*)}\right]  \nonumber \\
&  - \frac{(\widehat R h')(b^*) - c_2(b^*)}{\widehat \phi(b^*)} \cdot \left[\frac{\widehat F(x) - \widehat F(a^*)}{\widehat F(b^*) - \widehat F(a^*)}\right].
\end{align}

We now want to show that $\widehat{w}_1 \geq 0$ on $(a^*,b^*)$ since this is clearly equivalent to proving that $u' \geq - c_1$ on such interval.
Clearly $\widehat{w}_1(a^*) = 0$, and by standard differentiation one also gets that $\widehat{w}'_1(a^*) = 0$ since $u{''}(a^*) + c'_1(a^*)=0$. Also, by \eqref{uc1-1} we have that
$\widehat{w}_1(b^*)=(c_1(b^*) + c_2(b^*))/\widehat{\phi}(b^*)>0$, where the last inequality is due to Assumption \ref{A2} and the positivity of $\widehat{\phi}$.

If we now can prove that $\widehat{w}'_1 \geq 0$ on $(a^*,b^*)$, then we have that $\widehat{w}_1(x) \geq \widehat{w}_1(a^*)=0$ for any $x \in (a^*,b^*)$, and therefore that $u' \geq - c_1$ on $(a^*,b^*)$. Recalling \eqref{repr-c}, we can rewrite \eqref{uc1-1} as
\begin{align}
\label{uc1-1-bis}
\widehat{w}_1(x) = & \frac{[\widehat R (h'-\widehat{c}_1)](x)}{\widehat \phi(x)} - \frac{[\widehat R (h'-\widehat{c}_1)](a^*)}{\widehat \phi(a^*)} \cdot \left[\frac{\widehat F(b^*) - \widehat F(x)}{\widehat F(b^*) - \widehat F(a^*)}\right]  \nonumber \\
&  - \frac{[\widehat R (h'+\widehat{c}_2)](x)}{\widehat \phi(b^*)} \cdot \left[\frac{\widehat F(x) - \widehat F(a^*)}{\widehat F(b^*) - \widehat F(a^*)}\right].
\end{align}
Then we can employ \eqref{resolventhatrepr} and \eqref{Greenhat} in \eqref{uc1-1-bis}, perform a standard differentiation with respect to $x$, and use the fact that $a^*$ and $b^*$ satisfy \eqref{system-a}--\eqref{system-b} to find after some algebra
\begin{align}
\label{uc1-2}
\widehat{w}'_1(x) & = - \frac{1}{w} \widehat{F}'(x) \int_{a^*}^{x}\big(h'(z) - \widehat{c}_1(z)\big)\widehat{\phi}(z)\widehat{m}'(z)\ dz =: - \frac{1}{w} \widehat{F}'(x) Q(x).
\end{align}

The function $Q$ introduced in \eqref{uc1-2} is such that $Q(a^*)=0$. Moreover, due to Assumption \ref{A2} we have that
\begin{equation}
\label{Q}
Q'(x) = \big(h'(x) - \widehat{c}_1(x)\big)\widehat{\phi}(x)\widehat{m}'(x) \left\{ \begin{array}{ll}
<0, & x < \widetilde{x}_1, \\
\\
=0, & x = \widetilde{x}_1, \\
\\
> 0, & x > \widetilde{x}_1, \\
\end{array} \right. 
\end{equation}
and $Q(x) < 0$ for any $x \in (a^*,\widetilde{x}_1]$. We now have two cases: either \emph{(i)} there exists $\widetilde{x}_1 < \ell_1 < b^*$ such that $Q(\ell_1)=0$ (and notice that, if it exists, such a point is unique by strict monotonicity of $Q(\,\cdot\,)$ on $(\widetilde{x}_1,\overline{x})$); or \emph{(ii)} $Q(x) \leq 0$ for any $x \in (a^*,b^*)$. 

In case \emph{(ii)}, we immediately conclude from \eqref{uc1-2} that $\widehat{w}'_1 \geq 0$ on $(a^*,b^*)$, and therefore that $u' \geq - c_1$ on $(a^*,b^*)$. 

On the other hand, if we are in case \emph{(i)}, by \eqref{uc1-2} we see that the point $\ell_1$ is also the unique stationary point of $\widehat{w}'_1$. In fact, it is a maximum of $\widehat{w}_1$ since one can easily derive from \eqref{uc1-2} that $\widehat{w}''_1(\ell_1) = -w^{-1}\widehat{F}'(\ell_1)Q'(\ell_1) < 0$, where the last inequality is due to the fact that $Q(\ell_1)=0$ but $Q'(\ell_1) > 0$. However, since we know that $\widehat{w}_1(a^*)=0$ and $\widehat{w}_1(b^*)>0$, we conclude that also in case \emph{(ii)} one has that $\widehat{w}'_1>0$ on $(a^*,b^*)$, and therefore that $u' \geq - c_1$ on $(a^*,b^*)$. 
\vspace{0.25cm}

\emph{Step 4.} Combining the results of the previous steps the proof is completed.
\end{proof}


Given $x\in \cI$, let $\nu^{\star}$ be such that $\nu^{\star} = \xi^{\star} - \eta^{\star}$ where $(\xi^{\star}, \eta^{\star})$ is the couple of nondecreasing processes that solves the following double Skorokhod reflection problem $\textbf{SP}(a^*,b^*;x)$
\begin{align}
\hspace{-10pt}
\text{Find $(\xi,\eta)\in \mathcal{U} \times \mathcal{U}$ s.t.}
\left\{
\begin{array}{l}
\displaystyle X^{x,\xi,\eta}_t\in[a^*,b^*], \text{$\P$-a.s.~for $t > 0$},\\[+10pt]
\displaystyle \int^{T}_0{\mathds{1}_{\{X^{x,\xi,\eta}_t>a^*\}}d\xi_t}=0, \text{$\P$-a.s.~for any $T>0$,}\\[+12pt]
\displaystyle \int^{T}_0{\mathds{1}_{\{X^{x,\xi,\eta}_t<b^*\}}d\eta_t}=0, \text{$\P$-a.s.~for any $T>0$.}
\end{array}
\right.
\end{align}
 
Under Assumption \ref{A1}, Problem $\textbf{SP}(a^*,b^*;x)$ admits a unique pathwise solution (cf., e.g., Theorem 4.1 in \cite{Tanaka}). Moreover, $t \mapsto \xi^{\star}_t$ and $t \mapsto \eta^{\star}_t$ are continuous, a part possible jumps at time zero of amplitude $(a^* - x)^+$ and $(x -b^*)^+$, respectively. It also follows that $\text{supp}\{d\xi^{\star}_t\}\cap \text{supp}\{d\eta^{\star}_t\}=\emptyset$. In the following, we set $\xi^{\star}_{0}=0=\eta^{\star}_{0}$ a.s., and, to simplify notation, $X^{\star}:=X^{\xi^{\star}, \eta^{\star}}$, $\mathbb{P}_x$-a.s.

\begin{prop}
Let $x \in \cI$ and let $(\xi^{\star}, \eta^{\star})$ solve $\textbf{SP}(a^*,b^*;x)$. Then the process $\nu^{\star} := \xi^{\star} - \eta^{\star}$ is an admissible control.
\end{prop}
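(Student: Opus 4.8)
The plan is to verify, in turn, the four requirements of Definition~\ref{def:admiss}; the $\cI$-confinement and conditions~(b) and~(c) are essentially immediate, while condition~(a) carries the real work. Indeed, by construction the pair $(\xi^{\star},\eta^{\star})$ solving $\textbf{SP}(a^*,b^*;x)$ forces $X^{\star}_t \in [a^*,b^*]$ for all $t>0$, whereas $X^{\star}_0 = x \in \cI$; since $[a^*,b^*] \subset \cI$, this already yields $\sigma_{\cI} = +\infty$ $\P_x$-a.s.\ and $\sup_{t\geq 0}|X^{\star}_t| \leq \max\{|x|,|a^*|,|b^*|\}$ $\P_x$-a.s. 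In particular $e^{-\frac{r}{2}t}|X^{\star}_t|^{1+\gamma}$ is bounded by a deterministic constant, which gives~(c); and, since $h\geq 0$ is continuous on $\cI$ (hence bounded on the compact $[a^*,b^*]$), one has $\int_0^{\infty} e^{-rs}h(X^{\star}_s)\,ds \leq r^{-1}\sup_{[a^*,b^*]}h < \infty$ pathwise, which after taking expectations gives~(b).

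It remains to check condition~(a). Since $c_1,c_2\in C^2(\cI)$ are in particular bounded on every compact subinterval of $\cI$, since for $t>0$ the random measure $d\xi^{\star}$ is carried by $\{X^{\star}=a^*\}$ and $d\eta^{\star}$ by $\{X^{\star}=b^*\}$, and since the only jumps of $\xi^{\star}$ and $\eta^{\star}$ occur at $t=0$ with deterministic amplitudes $(a^*-x)^+$, $(x-b^*)^+$ over a compact interval, condition~(a) follows once we prove the estimate
\begin{equation}
\label{aux-admiss1}
\E_x\bigg[\int_0^{\infty} e^{-rs}\,d\xi^{\star}_s\bigg] + \E_x\bigg[\int_0^{\infty} e^{-rs}\,d\eta^{\star}_s\bigg] < \infty .
\end{equation}
To obtain \eqref{aux-admiss1} I would fix $g \in C^2(\overline{\cI})$ with $g'(a^*)=1$ and $g'(b^*)=-1$ (a quadratic does the job), apply the It\^o--Meyer formula to $\big(e^{-rt}g(X^{\star}_t)\big)_{t\geq 0}$ on $[0,T]$ exactly as in \eqref{verifico1}, and use that $X^{\star}_t = a^*$ $d\xi^{\star,c}_t$-a.e.\ and $X^{\star}_t = b^*$ $d\eta^{\star,c}_t$-a.e.\ for $t>0$: because $g'(a^*)=1$ and $g'(b^*)=-1$, the two reflection terms then combine into $\int_0^T e^{-rt}\,d\xi^{\star,c}_t + \int_0^T e^{-rt}\,d\eta^{\star,c}_t$, and, collecting the time-zero contributions, one arrives at
\begin{equation}
\label{aux-admiss2}
\int_0^T e^{-rt}\,d\xi^{\star,c}_t + \int_0^T e^{-rt}\,d\eta^{\star,c}_t = e^{-rT}g(X^{\star}_T) - g(X^{\star}_{0+}) - \int_0^T e^{-rt}(\cL_X - r)g(X^{\star}_t)\,dt - M_T ,
\end{equation}
where $M_T := \int_0^T e^{-rt}\sigma(X^{\star}_t)g'(X^{\star}_t)\,dB_t$. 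Since $g$, $g'$, $\sigma$, hence $(\cL_X - r)g$, are all bounded on $[a^*,b^*]$ and $X^{\star}_t \in [a^*,b^*]$ for $t>0$, the process $M$ is a genuine ($L^2$) martingale and, up to the mean-zero term $M_T$, the right-hand side of \eqref{aux-admiss2} is bounded in absolute value by $2\sup_{[a^*,b^*]}|g| + r^{-1}\sup_{[a^*,b^*]}|(\cL_X-r)g| < \infty$, uniformly in $T$. Taking expectations in \eqref{aux-admiss2} and letting $T\uparrow\infty$, monotone convergence gives $\E_x\big[\int_0^{\infty} e^{-rt}\,d\xi^{\star,c}_t + \int_0^{\infty} e^{-rt}\,d\eta^{\star,c}_t\big] < \infty$; adding the finite deterministic jump masses at $t=0$ yields \eqref{aux-admiss1}, and hence~(a), since e.g.\ $\E_x[\int_0^{\infty} e^{-rs}|c_1(X^{\star}_s)|\,{\scriptstyle{\oplus}}\,d\xi^{\star}_s]$ equals $|c_1(a^*)|\,\E_x[\int_0^{\infty} e^{-rs}\,d\xi^{\star,c}_s]$ plus the finite deterministic jump contribution $\int_0^{(a^*-x)^+}|c_1(x+z)|\,dz$, and likewise for the $c_2$-term.

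The main obstacle is precisely \eqref{aux-admiss1}: one must choose the test function $g$ so as to decouple the two reflection directions and then carefully justify the localized It\^o--Meyer expansion, the true-martingale property of $M$, and the treatment of the jump at $t=0$. None of these steps is deep, but each rests on the boundedness afforded by the fact that $X^{\star}$ stays in the compact band $[a^*,b^*]$ for positive times (Proposition~\ref{prop:existenceab}); this is what makes the argument work.
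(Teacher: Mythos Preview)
Your proof is correct and follows the same overall structure as the paper's: confinement and conditions~(b) and~(c) are dispatched immediately using that $X^{\star}$ lives in the compact band $[a^*,b^*]$ for $t>0$, and the substance of the argument is the estimate~\eqref{aux-admiss1}. The only difference is in how that estimate is obtained. The paper outsources it, citing equations~(2.16)--(2.17) in the proof of Lemma~2.1 of \cite{Shreveetal}, whereas you supply a direct, self-contained argument via the auxiliary test function $g$ with $g'(a^*)=1$, $g'(b^*)=-1$ and the It\^o--Meyer expansion~\eqref{aux-admiss2}. Your route is more transparent and avoids an external reference; the paper's is shorter on the page. Substantively the two arguments are the same, since the cited lemma is proved by precisely this kind of test-function computation.
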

\begin{proof}
Clearly $\nu^{\star} \in \mathcal{S}$. To prove the admissibility of $\nu^{\star}$, we have to verify the requirements of Definition \ref{def:admiss}. Since $X^{\star}_t\in[a^*,b^*] \subset \cI$ for all $t>0$, and $X^{\star}_0 = x \in \cI$, we have that $\mathcal{\sigma}_{\cI}=+\infty$ $\P_x$-a.s. Moreover, it is easy to see that also (b) and (c) of Definition \ref{def:admiss} are fulfilled.

It thus remains to show that Definition \ref{def:admiss}-(a) is satisfied as well.
By \eqref{defintegral1} and the fact that $(\xi^{\star}, \eta^{\star})$ solves $\textbf{SP}(a^*,b^*;x)$ we have
\begin{align}
\label{integraladm-2}
&\E_x\bigg[\int_{0}^{\infty} e^{-rt}|c_1(X^{\star}_t)| \circ d\xi^{\star}_t \bigg] = \int_0^{(a^*-x)^+}|c_1(x + z)| dz \nonumber \\
&+ |c_1(a^*)|\E_x\bigg[\int_{0}^{\infty} e^{-rt} d\xi^{c,\star}_t \bigg],
\end{align}
where we have used that $\{a^*\}$ is the support of the measure on $\mathbb{R}_+$, $d\xi^{c,\star}$, induced by the continuous part of $\xi^{\star}$. The continuity of $c_1$ yields
\begin{equation}
\label{inetgral1}
\int_0^{(a^*-x)^+}|c_1(x + z)| dz \leq (a^*-x)^+ \max_{u \in [0,(a^*-x)^+]}|c_1(x+u)| < \infty.
\end{equation}
Also, arguing as in the proof of Lemma 2.1 of \cite{Shreveetal} (see in particular equations (2.16)--(2.17) therein), we have that 
\begin{equation}
\label{inetgral2}
\E_x\bigg[\int_{0}^{\infty} e^{-rt} d\xi^{c,\star}_t \bigg] < \infty.
\end{equation}
Since analogous arguments apply to $\E_x[\int_{0}^{\infty} e^{-rt}|c_2(X^{\star}_t)| {\scriptstyle{\ominus}} d\eta^{\star}_t]$, by combining \eqref{inetgral1}, \eqref{inetgral2} and \eqref{integraladm-2}, we conclude that the requirement of Definition \ref{def:admiss}-(a) holds true, and therefore that $\nu^{\star}\in \mathcal{A}(x)$.
\end{proof}

\begin{thm}
\label{teo:verify}
Let $(\xi^{\star}, \eta^{\star})$ solving $\textbf{SP}(a^*,b^*;x)$, $\nu^{\star}$ such that $\nu^{\star} = \xi^{\star} - \eta^{\star}$, and $u$ as in \eqref{u-candidate}. Then one has that $u=v$ on $\cI$ and $\nu^{\star}$ is optimal for \eqref{value}.
\end{thm}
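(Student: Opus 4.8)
The plan is to invoke the Verification Theorem (Theorem \ref{thm:verifico}), checking that the pair $(u, \nu^{\star})$ satisfies all of its hypotheses. By Theorem \ref{uHJB}, the function $u$ from \eqref{u-candidate} is a $C^2$ classical solution of the HJB equation \eqref{HJB} with the required polynomial growth $|u(x)| \leq K(1+|x|^{\gamma+1})$, so the first part of Theorem \ref{thm:verifico} already gives $u \leq v$ on $\cI$. By the preceding Proposition, $\nu^{\star} = \xi^{\star} - \eta^{\star} \in \A(x)$ for every $x \in \cI$. Hence the whole proof reduces to verifying the three optimality conditions \eqref{insideC}, \eqref{mg}, and \eqref{flatoff} for $\widehat{\nu} = \nu^{\star}$, i.e.\ with $\widehat{\xi} = \xi^{\star}$, $\widehat{\eta} = \eta^{\star}$, $X^{x,\widehat{\xi},\widehat{\eta}} = X^{\star}$.

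First I would check \eqref{insideC}. Since $(\xi^{\star}, \eta^{\star})$ solves $\textbf{SP}(a^*,b^*;x)$, the controlled process satisfies $X^{\star}_t \in [a^*, b^*]$ $\P$-a.s.\ for all $t > 0$. By construction of $u$ (see Section \ref{sec:construct} and Theorem \ref{uHJB}), one has $(\cL_X - r)u(x) + h(x) = 0$ exactly on $(a^*, b^*)$, and at the endpoints $a^*, b^*$ the relation also holds by continuity of $x \mapsto (\cL_X - r)u(x) + h(x)$ (which is continuous since $u \in C^2(\cI)$). So $X^{\star}_t$ lies in the closed set $\{x : (\cL_X - r)u(x) + h(x) = 0\} \supseteq [a^*, b^*]$ for all $t > 0$, hence Lebesgue-a.e., $\P$-a.s., and \eqref{insideC} holds. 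Next I would verify \eqref{flatoff}: the Skorokhod conditions defining $\textbf{SP}(a^*,b^*;x)$ say that $d\xi^{\star}$ is supported on $\{X^{\star}_t = a^*\}$ and $d\eta^{\star}$ on $\{X^{\star}_t = b^*\}$. On $\{X^{\star}_t = a^*\}$ we have $u'(a^*) = -c_1(a^*)$, so $u'(X^{\star}_t) + c_1(X^{\star}_t) = 0$; similarly $c_2(X^{\star}_t) - u'(X^{\star}_t) = 0$ on $\{X^{\star}_t = b^*\}$. One must also handle the possible initial jump of $\xi^{\star}$ of size $(a^*-x)^+$ (resp.\ $\eta^{\star}$ of size $(x-b^*)^+$): the "${\scriptstyle{\oplus}}$" integral over that jump is $\int_0^{(a^*-x)^+}\big(u'(x+z) + c_1(x+z)\big)\,dz$, which vanishes because $u'(y) = -c_1(y)$ for all $y \in (\underline{x}, a^*]$ (and analogously for the $\eta$-jump using $u' = c_2$ on $[b^*, \overline{x})$). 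Thus both integrals in \eqref{flatoff} are identically zero for every $T \geq 0$.

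The remaining condition \eqref{mg} — that $M_t := \int_0^t e^{-rs}\sigma(X^{\star}_s)u'(X^{\star}_s)\,dB_s$ is a true $\F$-martingale, not merely a local one — is the step I expect to be the main technical point. Since $X^{\star}_t \in [a^*, b^*]$ for all $t > 0$, the integrand $e^{-rs}\sigma(X^{\star}_s)u'(X^{\star}_s)$ is bounded on $[\delta, T]$ for any $0 < \delta < T$ because $\sigma$ and $u'$ are continuous hence bounded on the compact $[a^*, b^*]$; near $s = 0$ one controls the contribution using that $X^{\star}_s$ stays in a bounded neighbourhood (an initial jump immediately places it in $[a^*,b^*]$, and before any jump $X^{\star}$ coincides with the uncontrolled diffusion started at $x$, for which the relevant moment is finite). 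Consequently $\E\big[\int_0^T e^{-2rs}\sigma^2(X^{\star}_s)(u'(X^{\star}_s))^2\,ds\big] < \infty$ for all $T$, which makes $(M_t)$ a square-integrable martingale on every $[0, T]$. I would write this out carefully, distinguishing the cases $x \in [a^*,b^*]$ (no initial jump, integrand bounded on all of $[0,T]$) and $x \notin [a^*,b^*]$ (instantaneous jump to the nearest endpoint, so $X^{\star}_s \in [a^*,b^*]$ for every $s > 0$ and the integrand is again bounded).

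Finally, with \eqref{insideC}, \eqref{mg} and \eqref{flatoff} all verified for $\widehat{\nu} = \nu^{\star}$, the second part of Theorem \ref{thm:verifico} yields $u = v$ on $\cI$ and the optimality of $\nu^{\star}$ for \eqref{value}, completing the proof.
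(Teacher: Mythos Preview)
Your proposal is correct and follows essentially the same approach as the paper: both proofs simply invoke the Verification Theorem (Theorem \ref{thm:verifico}) and check that $u$ satisfies its hypotheses via Theorem \ref{uHJB}, and that $\nu^{\star}$ satisfies \eqref{insideC}, \eqref{mg}, \eqref{flatoff} using the Skorokhod properties together with the structure of $u'$ on $(\underline{x},a^*]$, $(a^*,b^*)$, $[b^*,\overline{x})$. Your treatment of the initial jump in \eqref{flatoff} and your square-integrability argument for \eqref{mg} are more detailed than the paper's (which simply cites continuity of $\sigma$ and $u'$ on the compact $[a^*,b^*]$), but the overall route is the same.
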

\begin{proof}
It suffices to check that the conditions of Theorem \ref{thm:verifico} are met. By Theorem \ref{uHJB} the function $u$ of \eqref{u-candidate} is a classical solution to the HJB equation \eqref{HJB}, and it is such that $|u(x)| \leq K\big(1 + |x|^{\gamma + 1}\big)$, for some $K>0$ and where $\gamma \geq 1$ is the growth coefficient of $c_i$, $i=1,2$ (see Assumption \ref{A2}-(ii)).

Moreover, $\nu^{\star} $ is such that $X^{\star}_t \in [a^*,b^*]=\big\{x \in \cI:\, \big(\cL_X - r\big)u(x) + h(x) =0\big\}$ for a.e.\ $t$ and a.s.; the process \eqref{mg} is an $\mathbb{F}$-martingale by continuity of $\sigma$ and $u'$; the conditions in \eqref{flatoff} are met due to the fact that $(\xi^{\star}, \eta^{\star})$ solves $\textbf{SP}(a^*,b^*;x)$, and that $\{x \in \cI: u'(x) = -c_1(x)\} = (\underline{x}, a^*]$ and $\{x \in \cI: u'(x) = c_2(x)\} = [b^*, \overline{x})$. Hence Theorem \ref{thm:verifico} applies and this completes the proof.
\end{proof}

\subsection{A Link with an Optimal Stopping Game}

The following proposition provides a probabilistic representation of the derivative $v'$ of the value function. This result plays an important role in the next section where we perform a comparative statics analysis.
\begin{prop}
\label{prop:OS}
Let $v$ be the value function of \eqref{value}. Then for any $x \in \cI$ one has
\begin{equation}
\label{gameOS}
v'(x) =  \inf_{\tau} \sup_{\sigma} \widehat{\mathcal{J}}_x(\tau,\sigma) = \sup_{\sigma} \inf_{\tau} \widehat{\mathcal{J}}_x(\tau,\sigma), 
\end{equation}
where $\tau,\sigma$, $i=1,2$ are $\widehat{\mathbb{F}}$-stopping times, and
\begin{align}
\label{functionalOS}
& \widehat{\mathcal{J}}_x(\tau,\sigma):=\widehat{\E}_x\bigg[ \int_0^{\tau \land \sigma} e^{- \int_0^s(r - \mu'(\X_u))du} h'(\widehat X_s)\ ds \nonumber \\
& - e^{- \int_0^{\sigma}(r - \mu'(\X_u))du}\, c_1(\X_{\sigma}) \1_{\{\sigma < \tau\}} + e^{- \int_0^{\tau}(r - \mu'(\X_u))du}\,c_2(\X_{\tau}) \1_{\{\tau < \sigma\}} \bigg].
\end{align}
Moreover, for any $x \in \cI$, the couple of $\widehat{\mathbb{F}}$-stopping times $(\tau^*,\sigma^*)$ given by
\begin{equation}
\label{saddle}
\tau^*:=\inf\{t\geq0: \X_t \geq b^*\} \qquad \mbox{and} \quad \sigma^*:=\inf\{t\geq0: \X_t \leq a^*\}, \quad \widehat{\mathbb{P}}_x-\mbox{a.s.}
\end{equation}
form a saddle-point; that is,  
$$\widehat{\mathcal{J}}_x(\tau^*,\sigma) \leq \widehat{\mathcal{J}}_x(\tau^*,\sigma^*) \leq \widehat{\mathcal{J}}_x(\tau,\sigma^*),$$
for any couple of $\widehat{\mathbb{F}}$-stopping times $(\tau,\sigma)$.
\end{prop}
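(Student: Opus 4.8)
The plan is to identify $v'$ with $u'$ and to recognize \eqref{functionalOS} as the zero-sum optimal stopping game naturally attached to the variational inequality satisfied by $u'$. By Theorem~\ref{teo:verify} we have $v=u$ on $\cI$, and since $u\in C^2(\cI)$ by Theorem~\ref{uHJB} the function $w:=v'=u'$ is well defined and $C^1$ on $\cI$. From the construction of $u$ and the proof of Theorem~\ref{uHJB} we already know that $w=-c_1$ on $(\underline x,a^*]$, that $w=c_2$ on $[b^*,\overline x)$, that $-c_1\le w\le c_2$ on $\cI$, and that $(\cL_{\X}-(r-\mu'))w+h'=0$ on $(a^*,b^*)$; moreover, since $a^*<\widetilde x_1<\widetilde x_2<b^*$, Assumption~\ref{A2}-(ii) gives $(\cL_{\X}-(r-\mu'))w+h'=-\widehat c_1+h'<0$ on $(\underline x,a^*]$ and $(\cL_{\X}-(r-\mu'))w+h'=\widehat c_2+h'>0$ on $[b^*,\overline x)$. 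These are precisely the conditions characterizing the value of a Dynkin game with killing rate $r-\mu'$, running reward $h'$, lower obstacle $-c_1$ (the reward of the $\sigma$-player upon stopping) and upper obstacle $c_2$ (the reward of the $\tau$-player upon stopping), with contact sets $\{w=-c_1\}=(\underline x,a^*]$ and $\{w=c_2\}=[b^*,\overline x)$ whose first hitting times for $\X$ are $\sigma^*$ and $\tau^*$ as in \eqref{saddle}.

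I would then establish \eqref{gameOS} together with the saddle property by three applications of Dynkin's formula to $s\mapsto e^{-\int_0^s(r-\mu'(\X_u))du}\,w(\X_s)$ under $\widehat{\P}_x$, stopped at an appropriate time. (i) For $(\tau,\sigma)=(\tau^*,\sigma^*)$: up to $\tau^*\wedge\sigma^*$ the diffusion stays in $(a^*,b^*)$, where $(\cL_{\X}-(r-\mu'))w=-h'$; since $\X$ is a regular diffusion and $(a^*,b^*)$ is bounded, $\tau^*\wedge\sigma^*<\infty$ $\widehat{\P}_x$-a.s.\ (indeed with finite mean, cf.\ Section~\ref{sec:OU}), the stopped stochastic integral $\int_0^{\,\cdot\,\wedge\tau^*\wedge\sigma^*}e^{-\int_0^s(r-\mu')}\sigma(\X_s)w'(\X_s)\,d\widehat B_s$ is a square-integrable martingale, and at the exit time $w(\X_{\sigma^*})=-c_1(\X_{\sigma^*})$ on $\{\sigma^*<\tau^*\}$ and $w(\X_{\tau^*})=c_2(\X_{\tau^*})$ on $\{\tau^*<\sigma^*\}$; this yields $w(x)=\widehat{\mathcal{J}}_x(\tau^*,\sigma^*)$. (ii) For $\sigma=\sigma^*$ and $\tau$ arbitrary: up to $\tau\wedge\sigma^*$ one has $\X\in(a^*,\overline x)$, where $(\cL_{\X}-(r-\mu'))w+h'\ge0$, i.e.\ $-(\cL_{\X}-(r-\mu'))w\le h'$; using in addition $w\le c_2$ at the terminal time, with $w=-c_1$ on $\{\sigma^*\le\tau\}$, we obtain $w(x)\le\widehat{\mathcal{J}}_x(\tau,\sigma^*)$. (iii) Symmetrically, for $\tau=\tau^*$ and $\sigma$ arbitrary: up to $\tau^*\wedge\sigma$ one has $\X\in(\underline x,b^*)$, where $(\cL_{\X}-(r-\mu'))w+h'\le0$, i.e.\ $-(\cL_{\X}-(r-\mu'))w\ge h'$, and $w\ge-c_1$ at the terminal time, with $w=c_2$ on $\{\tau^*\le\sigma\}$, giving $w(x)\ge\widehat{\mathcal{J}}_x(\tau^*,\sigma)$. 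Combining (i)--(iii) we get, for every pair $(\tau,\sigma)$,
\[
\widehat{\mathcal{J}}_x(\tau^*,\sigma)\ \le\ \widehat{\mathcal{J}}_x(\tau^*,\sigma^*)\ =\ w(x)\ =\ v'(x)\ \le\ \widehat{\mathcal{J}}_x(\tau,\sigma^*),
\]
which is the saddle-point inequality; taking first $\sup_\sigma$ and then $\inf_\tau$ on the left, and first $\inf_\tau$ and then $\sup_\sigma$ on the right, squeezes both $\inf_\tau\sup_\sigma\widehat{\mathcal{J}}_x$ and $\sup_\sigma\inf_\tau\widehat{\mathcal{J}}_x$ to $v'(x)$, which is \eqref{gameOS}.

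The step I expect to require the most care is the integrability and localization in (ii)--(iii), where $\tau$ (resp.\ $\sigma$) may be arbitrarily large or even infinite and $\X$ may explore all of $(\underline x,\overline x)$, so that $w'$ is no longer bounded along the trajectory. I would localize with $\rho_n:=\inf\{t\ge0:\X_t\notin(\underline x+\tfrac1n,\,\overline x-\tfrac1n)\}\wedge n$, apply Dynkin's formula up to $\tau\wedge\sigma^*\wedge\rho_n$ (resp.\ $\tau^*\wedge\sigma\wedge\rho_n$), and then let $n\to\infty$. The passage to the limit is justified by the polynomial growth $|w(x)|=|u'(x)|\le K(1+|x|^{\gamma})$ (read off from \eqref{u-candidate}, since $w$ coincides with $-c_1$ or $c_2$ outside the bounded set $[a^*,b^*]$ and is $C^1$ on it), by the growth bounds on $h'$, $c_1$, $c_2$ and the representation \eqref{repr-c} of Assumption~\ref{A2}, and by moment estimates for $\X$ which, via Remark~\ref{rem:Girsa}, follow from those available for the uncontrolled diffusion in \eqref{stateX}; together these ensure that the three terms defining $\widehat{\mathcal{J}}_x(\tau,\sigma)$ are integrable and that the discounted boundary terms vanish as $n\to\infty$. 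A minor point is the treatment of the tie set $\{\tau=\sigma\}$ in the terminal reward, which is handled in the usual way once one notes that the obstacles are ordered, $-c_1\le c_2$ on $\overline{\cI}$.
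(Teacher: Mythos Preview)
Your proposal is correct and follows essentially the same route as the paper: both identify $v'$ with $u'$, record the three regimes \eqref{1-OS}--\eqref{3-OS} (which you derive identically, from the construction of $u$ and Assumption~\ref{A2}-(ii)), and then verify the saddle inequalities by three applications of It\^o/Dynkin's formula to $e^{-\int_0^{\cdot}(r-\mu')}\,w(\X_\cdot)$ stopped at $\tau^*\wedge\sigma^*$, $\tau\wedge\sigma^*$, and $\tau^*\wedge\sigma$ respectively. The only difference is that you spell out the localization argument and the treatment of ties, which the paper leaves implicit; note that since $w=u'$ is only $W^{2,\infty}_{loc}$ (not $C^2$ across $a^*,b^*$), you should invoke a generalized It\^o formula (as the paper does via \cite[Theorem~10.4.1]{Oksendal}) rather than the classical Dynkin formula.
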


\begin{proof}
We only provide a sketch of the proof, since its arguments are quite standard. From Theorem \ref{teo:verify} we know that $v = u$ on $\cI$, with $u$ as in \eqref{u-candidate}. Since $v\in C^2(\mathcal{I})$, then $v' \in C^1(\cI)$. Moreover, it is easy to check from \eqref{u-candidate} that $v'''$ is locally bounded on $\cI$; that is, $v' \in W^{2,\infty}_{loc}(\cI)$. Moreover, 
\begin{equation}
\label{1-OS}
\big(\cL_{\X} - (r-\mu'(x))\big)v'(x) + h'(x) =0 \quad \mbox{and} \quad -c_1(x) \leq v'(x) \leq c_2(x) \quad \mbox{on} \quad (a^*,b^*),
\end{equation}
\begin{equation}
\label{2-OS}
v'(x) = -c_1(x) < c_2(x)\quad \mbox{and} \quad \big(\cL_{\X} - (r-\mu'(x))\big)v'(x)+ h'(x)  \leq  0 \quad \mbox{on}\,\,(\underline{x}, a^*],
\end{equation}
\begin{equation}
\label{3-OS}
v'(x) = c_2(x) > -c_1(x) \quad \mbox{and} \quad \big(\cL_{\X} - (r-\mu'(x))\big)v'(x) + h'(x) \geq  0 \quad \mbox{on}\,\,[b^*,\overline{x}).
\end{equation}
The first equation in \eqref{1-OS} easily follows by noticing that $(\cL_{X} - r)v(x) + h(x) =0$ for any $x\in (a^*,b^*)$, and by differentiating such an equation with respect to $x$. On the other hand, the inequalities involving $(\cL_{\X} - (r-\mu'))v'+ h'$ in \eqref{2-OS} and in \eqref{3-OS} follow from Assumption \ref{A2}-(ii),  together with the fact that $(\underline{x}, a^*] \subset (\underline{x}, \widetilde{x}_1)$ and $[b^*,\overline{x}) \subset (\widetilde{x}_2,\overline{x})$.

Given an arbitrary $\widehat{\mathbb{F}}$-stopping time $\tau$, an application of a generalized version of It\^o's lemma (see, e.g., Theorem 10.4.1 in \cite{Oksendal}) to the process $(e^{-\int_0^{t}(r - \mu'(\X_u))du}v'(\widehat X_{t}))_{t\geq 0}$ on the interval $[0,\tau \land \sigma^*]$ yields $v'(x) \leq \widehat{\mathcal{J}}_x(\tau,\sigma^*)$, upon using \eqref{1-OS}-\eqref{3-OS}. On the other hand, applying again generalized It\^o's lemma to the process $(e^{-\int_0^{t}(r - \mu'(\X_u))du} v'(\widehat X_{t}))_{t\geq 0}$, but now on the interval $[0,\tau^* \land \sigma]$ for an arbitrary $\widehat{\mathbb{F}}$-stopping time $\sigma$, and employing \eqref{1-OS}-\eqref{3-OS} gives $v'(x) \geq \widehat{\mathcal{J}}_x(\tau^*,\sigma)$. Finally, by using generalized It\^o's lemma on the process $(e^{-\int_0^{t}(r - \mu'(\X_u))du} v'(\widehat X_{t}))_{t\geq 0}$ on the interval $[0,\tau^* \land \sigma^*]$ one finds that $v'(x) = \widehat{\mathcal{J}}_x(\tau^*,\sigma^*)$ by \eqref{1-OS}.

Hence \eqref{gameOS} holds true, and the $\widehat{\mathbb{F}}$-stopping times defined in \eqref{saddle} form a saddle-point.
\end{proof}

The previous proposition shows that $v'$ equals the value function of a zero-sum game of optimal stopping (a so-called Dynkin game, cf.\ \cite{Dynkin}). Furthermore, the boundaries that trigger the optimal control, also determine a saddle-point in the game of optimal stopping. Such finding is consistent to the known relation between bounded variation control problems and zero-sum games of optimal stopping (see, e.g., \cite{GT08}, \cite{KW}, and \cite{Taksar85}).


\section{A Case Study with a Mean-Reverting (Log-)Exchange Rate}
\label{sec:OU}

In this section we assume that in \eqref{stateX} one has $\mu(x) = \rho (m - x)$, for some $\rho>0$ and $m\in \R$, and $\sigma(x) = \sigma>0$; that is, for a given $\nu=\xi-\eta \in \mathcal{S}$, the (log-)exchange rate $X^{x;\xi,\eta}$ is a linearly controlled mean-reverting process with dynamics
\begin{equation}
\label{OU}
dX_t = \rho(m - X_t)\ dt + \sigma\ dB_t + d\xi_t - d\eta_t, \qquad X_0=x \in \R.
\end{equation}
In absence of interventions (i.e.\ $\nu\equiv 0$), this specification is the simplest dynamics which keeps $X$ in a given (suitable) region with a high probability, and empirical studies (see, e.g., \cite{Bo,Tvedt}) have concluded that it well describes several exchange rates among the main world countries.

In this section we also take the instantaneous costs $c_i$, $i=1,2$, such that $c_i(x) \equiv c_i$ for all $x \in \R$, and we specify a quadratic holding cost function of the form
$$h(x;\theta) = \frac12 (x - \theta)^2.$$
The parameter $\theta>0$ represents a so-called \emph{reference target}, and it can be also viewed as the logarithm of the \emph{central parity} (introduced in \cite{Krugman} for the first time). The function $h$ penalizes any displacement of the (log-)exchange rate from such a value.

We notice that with $X$ as in \eqref{OU}, we have $\sigma'(x) = 0$ for all $x \in \cI$. It thus follows that the process $\X$ of \eqref{statehatX} is the unique strong solution to 
\begin{equation}
\label{hatOU}
d\X_t = \rho(m - \X_t)\ dt + \sigma\ d\widehat{B}_t, \qquad X_0=x \in \R,
\end{equation}
where $\widehat{B}$ is a standard Brownian motion. Moreover, because $r-\mu'(x)=r + \rho$, the characteristic equation \eqref{ODE2} reads $\frac{1}{2}\sigma^2 f'' + \rho(m- x)f' - (r+\rho)u =0$, $r > 0$, and it is known that it admits the two linearly independent, positive solutions (cf.~\cite{JYC}, p.\ 280)
\begin{equation}
\label{phiOU}
\widehat{\phi}(x):=
e^{\frac{\rho(x-m)^2}{2\sigma^2}}D_{-\frac{r+\rho}{\rho}}\Big(\frac{(x-m)}{\sigma}\sqrt{2\rho}\Big)
\end{equation}
and
\begin{equation}
\label{psiOU}
\widehat{\psi}(x):=
e^{\frac{\rho(x-m)^2}{2\sigma^2}}D_{-\frac{r+\rho}{\rho}}\Big(-\frac{(x-m)}{\sigma}\sqrt{2\rho}\Big),
\end{equation}
which are strictly decreasing and strictly increasing, respectively. In both \eqref{phiOU} and \eqref{psiOU} $D_{\alpha}$ is the cylinder function of order $\alpha$ given by (see, e.g., \cite{Trascendental}, Chapter VIII, Section 8.3, eq.\ (3) at page 119)
\begin{align}
\label{cylinder}
D_{\alpha}(x):= \frac{e^{-\frac{x^2}{4}}}{\Gamma(-\alpha)}\int_0^{\infty}t^{-\alpha -1} e^{-\frac{t^2}{2} - x t} dt, \quad \text{Re}(\alpha)<0,
\end{align}
where $\Gamma(\,\cdot\,)$ is the Euler's Gamma function.

Within this setting, it is easy to see that the two equations for the free boundaries $a^*$ and $b^*$ (cf.\ \eqref{first} and \eqref{second}) read
\begin{align}
\int_a^b  \big(z-\theta + (r+\rho)c_1\big) \widehat m'(z) \widehat \phi(z)\ dz & =  -(r+\rho)(c_1+c_2)\int_b^{\infty} \widehat m'(z) \widehat \phi(z)\ dz, \label{first-OU} \\
\int_a^b  \big(z-\theta - (r+\rho)c_2 \big) \widehat m'(z) \widehat \psi(z)\ dz & =  (r+\rho)(c_1+c_2) \int_{-\infty}^a \widehat m'(z) \widehat \psi(z)\ dz, \label{second-OU}
\end{align}
where $\widehat{m}'$ is given by \eqref{hatm}. 

In the next section we study the dependency of the optimal boundaries $a^*$ and $b^*$ with respect to the model's parameters $m$, $\sigma$, $c_1$, $c_2$, and $\theta$. 

\subsection{Comparative Statics Results}
\label{sec:comparative}

In the following we will often use the notation $a^*(\,\cdot\,)$, $b^*(\,\cdot\,)$ and $v'(x;\cdot)$ to stress the dependence of $a^*$, $b^*$ and $v'$ with respect to a given parameter. For some of the next results (namely, Propositions \ref{prop:CSm}, \ref{prop:CSc}, and \ref{prop:CStheta}) an important role is played by the representation of $v'$ given in \eqref{gameOS}.

\begin{prop}
\label{prop:CSm}
The optimal intervention boundaries $a^*$ and $b^*$ are decreasing in the long-run equilibrium level $m$; that is, $m \mapsto a^*(m)$ and $m \mapsto b^*(m)$ are decreasing.
\end{prop}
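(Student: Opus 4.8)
The plan is to exploit the saddle-point characterization of $v'$ from Proposition \ref{prop:OS}, together with a change of variables that absorbs the parameter $m$ into the state. The key observation is that, with $\mu(x) = \rho(m-x)$ and $\sigma$ constant, the diffusion $\widehat X$ of \eqref{hatOU} is spatially homogeneous in $m$: if $\widehat X^{x}$ solves \eqref{hatOU} with mean-reversion level $m$ started at $x$, then $\widehat X^{x} - m$ has the same law as the solution with $m=0$ started at $x-m$. Moreover $r - \mu'(\widehat X_u) = r + \rho$ does not depend on $m$, and the costs $c_i$ are constant, so only the holding-cost term $h'(\widehat X_s) = \widehat X_s - \theta$ sees $m$; after the shift it becomes $(\widehat X^0_s + m) - \theta = \widehat X^0_s - (\theta - m)$. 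Hence the functional \eqref{functionalOS} satisfies a translation identity of the form $\widehat{\mathcal J}_x(\tau,\sigma;m,\theta) = \widehat{\mathcal J}_{x-m}(\tau,\sigma;0,\theta-m)$ for the game driven by $\widehat X^0$, where the admissible stopping times are in natural bijection (composing with the deterministic shift). Taking $\inf_\tau\sup_\sigma$ on both sides yields $v'(x;m,\theta) = \widetilde v'(x-m;\theta-m)$, where $\widetilde v'$ denotes the game value in the $m=0$ model.

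Next I would translate this into a statement about the free boundaries. Since $-c_1 \le v' \le c_2$ always, and since by \eqref{2-OS}--\eqref{3-OS} the lower obstacle $v' = -c_1$ is attained exactly on $(\underline x, a^*]$ and the upper obstacle $v' = c_2$ exactly on $[b^*, \overline x)$, the boundaries are recovered from $v'$ as $a^*(m) = \sup\{x : v'(x;m) = -c_1\}$ and $b^*(m) = \inf\{x : v'(x;m) = c_2\}$. From the identity $v'(x;m,\theta) = \widetilde v'(x-m;\theta-m)$ it follows that $a^*(m) = \widetilde a^*(\theta - m) + m$ and $b^*(m) = \widetilde b^*(\theta-m) + m$, where $\widetilde a^*, \widetilde b^*$ are the boundaries in the $m=0$ model as functions of the (shifted) target. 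So the $m$-dependence is reduced to understanding how $\widetilde a^*(\q), \widetilde b^*(\q)$ depend on the target $\q$: we need $\frac{d}{dm}[\widetilde a^*(\theta-m)+m] = 1 - (\widetilde a^*)'(\theta-m) \le 0$, i.e. $(\widetilde a^*)'(\q) \ge 1$, and similarly $(\widetilde b^*)'(\q)\ge 1$.

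The remaining ingredient — and the main obstacle — is the monotonicity/slope bound $(\widetilde a^*)'(\q) \ge 1$, $(\widetilde b^*)'(\q)\ge 1$. I would obtain this by implicit differentiation of the boundary system \eqref{first-OU}--\eqref{second-OU} (rewritten in the $m=0$, parameter-$\q$ form), exactly in the spirit of the uniqueness argument in the proof of Proposition \ref{prop:existenceab}: differentiating the two defining equations in $\q$ gives a $2\times2$ linear system for $((\widetilde a^*)', (\widetilde b^*)')$ whose coefficients involve $\widehat m' \widehat\psi$, $\widehat m'\widehat\phi$ evaluated at the boundaries and the integrals $\int_{\widetilde a^*}^{\widetilde b^*}\widehat m'\widehat\psi\,dz$, $\int_{\widetilde a^*}^{\widetilde b^*}\widehat m'\widehat\phi\,dz$ (the $\partial_\q$ of the holding term produces $-\int \widehat m'\widehat\psi$, resp.\ $-\int\widehat m'\widehat\phi$, since $\partial_\q h'(z) = -1$). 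One then checks, using the signs established in Proposition \ref{prop:existenceab} ($h'-\widehat c_1<0$ on $(\widetilde a^*,\widetilde x_1)$ changing sign, etc., and $\widehat c_1 + \widehat c_2 < 0$), that Cramer's rule yields both derivatives $\ge 1$. An alternative, perhaps cleaner, route to the same conclusion is purely probabilistic: since in the shifted game the obstacle $-c_1$ is constant while $h'$ shifts rigidly with $\q$, a coupling/comparison argument on $\widehat{\mathcal J}$ shows $\q \mapsto \widetilde v'(x;\q)$ is nonincreasing and $1$-Lipschitz-from-below in the right sense, forcing the continuation region $(\widetilde a^*(\q),\widetilde b^*(\q))$ to translate at speed $\ge 1$ with $\q$; I expect the sign bookkeeping in either version to be the only delicate point, everything else being the routine translation argument above.
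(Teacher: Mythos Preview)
Your translation identity is correct and cleanly derived: with $\mu(x)=\rho(m-x)$, $\sigma$ constant, $c_i$ constant, and $r-\mu'=r+\rho$ independent of $m$, one indeed gets $v'(x;m,\theta)=\widetilde v'(x-m;\theta-m)$ and hence $a^*(m)=m+\widetilde a^*(\theta-m)$, $b^*(m)=m+\widetilde b^*(\theta-m)$. The difficulty is that this reduction does not simplify the problem: showing $(\widetilde a^*)'(\q)\ge 1$ and $(\widetilde b^*)'(\q)\ge 1$ is \emph{equivalent} to the claimed monotonicity in $m$, not a weaker statement. Proposition~\ref{prop:CStheta} only gives $(\widetilde a^*)',(\widetilde b^*)'\ge 0$, which is not enough. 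Your implicit-differentiation sketch yields a $2\times 2$ linear system whose solution gives, e.g.,
\[
(\widetilde a^*)' \;=\; \frac{\int_{a}^{b}\widehat m'(z)\big[\widehat\phi(z)\widehat\psi(b)-\widehat\psi(z)\widehat\phi(b)\big]\,dz}{(\widetilde x_1-a)\,\widehat m'(a)\big[\widehat\phi(a)\widehat\psi(b)-\widehat\psi(a)\widehat\phi(b)\big]},
\]
and proving this ratio is $\ge 1$ is a genuine quantitative estimate, not ``sign bookkeeping''. Your probabilistic alternative is also incomplete: the running term in \eqref{functionalOS} gives at best a Lipschitz bound $|\partial_\q \widetilde v'|\le 1/(r+\rho)$ on the value (from $\int_0^{\tau\wedge\sigma}e^{-(r+\rho)s}ds\le 1/(r+\rho)$), and it is not clear how any Lipschitz-in-$\q$ control on $\widetilde v'$ translates into the slope bound $\ge 1$ for the free boundaries.

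The paper bypasses all of this with a direct comparison that you set up but did not use. For $m_1\ge m_2$, couple the two OU processes \eqref{hatOU} with the same Brownian motion: then $\widehat X^{x;m_1}_t-\widehat X^{x;m_2}_t=(m_1-m_2)(1-e^{-\rho t})\ge 0$ pathwise, the filtrations coincide (so the admissible stopping times are the same), and in \eqref{functionalOS} only the running term $h'(\widehat X_s)=\widehat X_s-\theta$ depends on $m$ while the discount $r+\rho$ and the obstacles $-c_1,c_2$ do not. Hence $\widehat{\mathcal J}_x(\tau,\sigma;m_1)\ge \widehat{\mathcal J}_x(\tau,\sigma;m_2)$ for every fixed $(\tau,\sigma)$, so $v'(x;m_1)\ge v'(x;m_2)$ by \eqref{gameOS}. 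Writing $a^*(m)=\sup\{x:\,v'(x;m)\le -c_1\}$ and $b^*(m)=\inf\{x:\,v'(x;m)\ge c_2\}$ immediately gives $a^*(m_1)\le a^*(m_2)$ and $b^*(m_1)\le b^*(m_2)$. This is the whole proof; no translation, no implicit differentiation, no slope estimates are needed.
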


\begin{proof}
Denote by $\X^{x;m}_t$ the unique strong solution to \eqref{hatOU} when the equilibrium value is $m\in \R$, and notice that $m \mapsto \X^{x;m}_t$ is a.s.\ increasing for all $t > 0$. Then, since $x \mapsto h'(x)$ is increasing, we have for all $m_1 \geq m_2$ that $v'(x;m_1) \geq v'(x;m_2)$ by \eqref{gameOS}.
Hence for all $m_1 \geq m_2$ we have
\begin{align*}
a^*(m_1) & = \sup\{ x \in \R\ |\ v'(x;m_1) \leq - c_1 \} \leq \sup\{ x \in \R\ |\ v'(x;m_2) \leq - c_1 \} = a^*(m_2),  \\
b^*(m_1) & =  \inf\{ x \in \R\ |\ v'(x;m_1) \geq c_2 \} \leq  \inf\{ x \in \R\ |\ v'(x;m_2) \geq c_2 \} = b^*(m_2);
\end{align*}
that is, $m \mapsto a^*(m)$ and $m \mapsto b^*(m)$ are decreasing.
\vspace{0.25cm}
\end{proof}

\begin{prop}
\label{prop:CSsigma}
The more the exchange market is volatile, the more the central bank is reluctant to intervene. That is, the optimal intervention boundaries $a^*$ and $b^*$ are such that
$\sigma \mapsto a^*(\sigma)$ is decreasing, and $\sigma \mapsto b^*(\sigma)$ is increasing.
\end{prop}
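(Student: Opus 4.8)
The plan is to differentiate the whole problem with respect to $\sigma$ and to run a maximum principle on $V:=\partial_\sigma v$: the signs of $\dot a^*:=\tfrac{d}{d\sigma}a^*(\sigma)$ and $\dot b^*:=\tfrac{d}{d\sigma}b^*(\sigma)$ will be read off from the sign of $V$ on the inaction interval, combined with two algebraic identities coming from the explicit form of $v$ outside $(a^*,b^*)$. Throughout one uses the OU specialization $\cL_X=\tfrac12\sigma^2\partial_{xx}+\rho(m-x)\partial_x$, and $\cL_{\X}=\cL_X$ since $\sigma'\equiv0$.

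First I would record the facts about $v=u$ (Theorem \ref{teo:verify}) that the argument needs. On $(a^*,b^*)$ the function $v$ solves $(\cL_X-r)v+h=0$, it is $C^2$ on $\cI$ with $v'=-c_1$ on $(\underline x,a^*]$, $v'=c_2$ on $[b^*,\overline x)$, and the smooth fit gives $v''(a^*)=v''(b^*)=0$. Differentiating $(\cL_X-(r+\rho))v'=-(x-\theta)$ (cf.\ \eqref{1-OS}) in $x$ shows that $v''$ solves $(\cL_X-(r+2\rho))v''=-1$ on $(a^*,b^*)$ with $v''(a^*)=v''(b^*)=0$, so by Feynman--Kac $v''(x)=\E_x\big[\int_0^{\tau}e^{-(r+2\rho)s}\,ds\big]>0$ on $(a^*,b^*)$, where $\tau$ is the exit time of the uncontrolled process from $(a^*,b^*)$. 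Evaluating the equation for $v'$ at the free boundaries and using $v'(a^*)=-c_1$, $v'(b^*)=c_2$, $v''(a^*)=v''(b^*)=0$ gives $\tfrac12\sigma^2v'''(a^*)=\widetilde x_1-a^*>0$ and $\tfrac12\sigma^2v'''(b^*)=\widetilde x_2-b^*<0$, the strict inequalities coming from $a^*<\widetilde x_1<\widetilde x_2<b^*$ (Proposition \ref{prop:existenceab}). Finally, Proposition \ref{prop:existenceab} together with the implicit function theorem applied to \eqref{first-OU}--\eqref{second-OU} yields that $\sigma\mapsto(a^*(\sigma),b^*(\sigma))$ is $C^1$, so $V:=\partial_\sigma v$ is well defined and (one-sidedly) smooth on $[a^*,b^*]$.

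The next step is the boundary-value problem for $V$. Differentiating $(\cL_X-r)v+h=0$ in $\sigma$ at an interior point gives $(\cL_X-r)V=-\sigma\,v''$ on $(a^*,b^*)$, whose right-hand side is strictly negative by the previous paragraph and which equals $0$ at $a^*$ and $b^*$ (since $v''$ vanishes there). Differentiating the identities $v'(a^*(\sigma);\sigma)\equiv-c_1$, $v'(b^*(\sigma);\sigma)\equiv c_2$ and using $v''(a^*)=v''(b^*)=0$ gives the Neumann conditions $V'(a^*)=V'(b^*)=0$. Moreover, since on $(\underline x,a^*]$ one has $v(x;\sigma)=c_1\big(a^*(\sigma)-x\big)+\tfrac1r\big[\tfrac12(a^*(\sigma)-\theta)^2-\rho(m-a^*(\sigma))c_1\big]$ by \eqref{nuovau}, differentiating at $x=a^*$ yields $V(a^*)=\tfrac{\dot a^*}{r}\,(a^*-\widetilde x_1)$; the symmetric computation on $[b^*,\overline x)$ gives $V(b^*)=\tfrac{\dot b^*}{r}\big[(b^*-\theta)-(r-\rho)c_2\big]$. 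As $a^*-\widetilde x_1<0$ and, using $b^*>\widetilde x_2$ and $c_2>0$, $(b^*-\theta)-(r-\rho)c_2>2\rho c_2>0$, the desired conclusion $\dot a^*<0<\dot b^*$ is equivalent to $V(a^*)>0$ and $V(b^*)>0$.

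It therefore remains — and this is the main obstacle — to show $V>0$ on $[a^*,b^*]$ from the data $(\cL_X-r)V=-\sigma v''<0$, $V'(a^*)=V'(b^*)=0$. Let $x_0$ minimise $V$ over $[a^*,b^*]$ and suppose $V(x_0)\le0$. If $x_0\in(a^*,b^*)$, then $V'(x_0)=0$, $V''(x_0)\ge0$, so $(\cL_X-r)V(x_0)=\tfrac12\sigma^2V''(x_0)-rV(x_0)\ge0$, contradicting $(\cL_X-r)V(x_0)<0$. If $x_0=a^*$, then $V'(a^*)=0$, $V''(a^*)\ge0$ and $(\cL_X-r)V(a^*)=0$ force first $V(a^*)=0$ and then $V''(a^*)=0$; differentiating $(\cL_X-r)V=-\sigma v''$ once more at $a^*$ gives $\tfrac12\sigma^2V'''(a^*)=-\sigma v'''(a^*)<0$, hence $V(x)=\tfrac16V'''(a^*)(x-a^*)^3+o\big((x-a^*)^3\big)<0$ as $x\downarrow a^*$, contradicting minimality; the case $x_0=b^*$ is symmetric, now using $v'''(b^*)<0$ so that $V'''(b^*)>0$ and $V(x)<0$ as $x\uparrow b^*$. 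Hence $V>0$ on $[a^*,b^*]$, so $V(a^*)>0$, $V(b^*)>0$, and by the identities of the previous paragraph $\dot a^*<0$, $\dot b^*>0$, which is the claim. The points demanding care are the joint regularity of $(x,\sigma)\mapsto v$ near the free boundaries (needed so that $V$ is one-sidedly $C^3$ up to $a^*$ and $b^*$ and the Taylor expansions are legitimate) and the two boundary cases of the maximum principle, which genuinely rely on the signs of $v'''$ at the free boundaries established in the second paragraph.
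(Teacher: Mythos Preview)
Your argument is correct and takes a genuinely different route from the paper's. Two minor slips do not affect the conclusion: in the identity for $V(b^*)$ you should have $(r+\rho)c_2$ rather than $(r-\rho)c_2$, so that $V(b^*)=\tfrac{\dot b^*}{r}(b^*-\widetilde x_2)$ directly, and ``$x\downarrow a^*$'' should read ``for $x>a^*$ close to $a^*$''. The regularity you flag (joint smoothness of $v$ in $(x,\sigma)$ on $[a^*,b^*]$, and $C^1$-dependence of $(a^*,b^*)$ on $\sigma$ via the implicit function theorem applied to \eqref{first-OU}--\eqref{second-OU}) is indeed available in the OU case because $\psi,\phi,Rh$ are analytic and the transversality established in Step~2 of the proof of Proposition~\ref{prop:existenceab} gives a nonsingular Jacobian.

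The paper instead uses a \emph{comparison/supersolution} argument: for $\sigma_1\ge\sigma_2$ it checks that $v^{(2)}$ (the value function for $\sigma_2$) satisfies $(\cL_{X^{\sigma_1}}-r)v^{(2)}+h\ge 0$ on all of $\cI$ --- the key inequality $\tfrac12(\sigma_1^2-\sigma_2^2)\,v^{(2)}_{xx}\ge 0$ coming from convexity of $v^{(2)}$ --- together with the gradient constraints, and then invokes the verification theorem to conclude $v^{(2)}\le v^{(1)}$. The boundary ordering $a^*_1\le a^*_2$, $b^*_1\ge b^*_2$ is then obtained by contradiction from the explicit formula \eqref{arrange-OU} for $v^{(i)}(a^*_i)$ and $v^{(i)}(b^*_i)$. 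Your approach is essentially the infinitesimal version of this: you also use $v''>0$, but feed it into a maximum-principle for $V=\partial_\sigma v$ rather than into a global supersolution comparison. The paper's method is more robust --- it compares any two values $\sigma_1\ge\sigma_2$ without assuming or establishing differentiability of the free boundaries in $\sigma$ --- whereas yours yields the sharper conclusion of \emph{strict} monotonicity ($\dot a^*<0<\dot b^*$) but requires the extra regularity step and the third-order boundary analysis to close the Neumann maximum-principle at the endpoints.
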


\begin{proof}
We borrow arguments from the proof of Theorem 6.1 in \cite{Matomaki} (see also Section 3 in \cite{Alvarez03}). Let $x\in \cI$, and denote by $X^{\sigma}$ the solution to \eqref{OU} for $\nu \equiv 0$ and for a volatility coefficient $\sigma>0$. Let $\sigma_1 \geq \sigma_2$, and for $i=1,2$ denote by $v^{(i)}$ the value function \eqref{value} when the underlying controlled process solves \eqref{OU} with volatility $\sigma_i$, by $\cL_{X^{\sigma_i}}$ the infinitesimal generator associated to the (uncontrolled) diffusion $X^{\sigma_i}$, and by $\cL_{\X^{\sigma_i}}$ the infinitesimal generator associated to the solution to \eqref{hatOU} with volatility $\sigma_i$. Also, $a^*_i$ and $b^*_i$, $i=1,2$, are the optimal control boundaries associated to the value function $v^{(i)}$, $i=1,2$.

Then, recall that the value function equals the function given in \eqref{u-candidate} and use that for any $i=1,2$ (cf.\ \eqref{arrange-final}),
\begin{equation}
\label{arrange-OU}
v^{(i)}(a^*_i)=\frac{1}{r} \left[ h(a^*_i) - \mu(a^*_i) c_1\right], \qquad
v^{(i)}(b^*_i)=\frac{1}{r} \left[ h(b^*_i) + \mu(b^*_i) c_2\right], 
\end{equation}
to find
\begin{equation*}
\big(\cL_{X^{\sigma_1}} - r\big)v^{(2)}(x) + h(x) := \left\{ \begin{array}{ll}
h(x) - h(a^*_2) + c_1(r+\rho)(x - a^*_2), & x \in (\underline x, a^*_2], \\
\\
\frac{1}{2}\big(\sigma_1^2 - \sigma_2^2)\frac{d^2}{dx^2}v^{(2)}(x), & x \in (a^*_2, b^*_2), \\
\\
h(x) - h(b^*_2) - c_2(r+\rho)(x - b^*_2), & x \in [b^*_2,\overline x). \\
\end{array} \right. 
\end{equation*}

We now prove that all the terms appearing on the right hand-side of the latter equation are nonnegative. On the one hand, 
$$h(x) - h(a^*_2) + c_1(r+\rho)(x - a^*_2) = \int_{x}^{a^*_2} \big[-h'(z) + \big(\cL_{\X^{\sigma_1}} - (r+\rho)\big)c_1\big] dz \geq 0, \quad x \in (\underline x, a^*_2],$$
and
$$h(x) - h(b^*_2) - c_2(r+\rho)(x - b^*_2)=\int_{b^*_2}^{x} \big[h'(z) + \big(\cL_{\X^{\sigma_1}} - (r+\rho)\big)c_2\big] dz \geq 0, \quad x \in [b^*_2,\overline x).$$
where the last inequalities are due to Assumption \ref{A2} and the fact that $a^*_2 < \widetilde{x}_1 < \widetilde{x}_2 < b^*_2$.

On the other hand, we notice that since $c_i(x)=c_i$ for all $x \in \cI$, the convexity of $x \mapsto h(x;\theta)$ and the linearity of the dynamics \eqref{OU} imply that functional $\mathcal{J}_x(\nu)$ is simultaneously convex in $(x,\nu)$, and the set of admissible controls is convex. Therefore, 
$$v^{(2)}(\lambda x + (1-\lambda)x') \leq \lambda \mathcal{J}_x(\nu) + (1-\lambda)\mathcal{J}_{x'}(\nu'),$$
for all $x,x' \in \cI$, $\nu\in \mathcal{A}(x)$, $\nu'\in \mathcal{A}(x')$, and $\lambda \in [0,1]$. Hence $v^{(2)}$ is convex on $\cI$, and this fact in turn yields
$$\frac{1}{2}\big(\sigma_1^2 - \sigma_2^2)\frac{d^2}{dx^2}v^{(2)}(x) \geq 0,\quad x \in (a^*_2, b^*_2),$$
since $\sigma_1^2 \geq \sigma_2^2$.

It thus follows from the previous considerations that $(\cL_{X^{\sigma_1}} - r)v^{(2)}(x) + h(x) \geq 0$ for all $x\in \cI$. Moreover, since $v^{(2)}$ is the value function when $\sigma=\sigma_2$, we also have $\frac{d}{dx}v^{(2)} \in [-c_1,c_2]$ and $|v^{(2)}(x)| \leq K(1 + |x|^{\gamma})$ on $\cI$, for some $K>0$, and for $\gamma \geq 1$ as in Assumption \ref{A2}. Therefore, arguing as in \emph{Step 1} of the proof of Theorem \ref{thm:verifico} we can show that $v^{(2)} \leq v^{(1)}$ on $\cI$.

Thanks to the last inequality we can now prove that $a^*_2\geq a^*_1$. We follow a contradiction scheme, and we suppose that $a^*_2 < a^*_1$. Then noticing that $a^*_2 < a^*_1 < \widetilde{x}_1$, and using \eqref{arrange-OU} and Assumption \ref{A2}-(ii) we have that (recall that here $\mu(x)=\rho(m-x)$, $x \in \mathbb{R}$)
\begin{align*}
v^{(2)}(a^*_2) & = \frac{1}{r}\big[h(a^*_2) - \mu(a^*_2)c_1\big] = \frac{1}{r}\big[h(a^*_2) + (ra^*_2 -\mu(a^*_2))c_1\big] -c_1a^*_2 \nonumber \\
& > \frac{1}{r}\big[h(a^*_1) + (ra^*_1 -\mu(a^*_1))c_1\big] -c_1a^*_2 = v^{(1)}(a^*_1) -c_1(a^*_2 -a^*_1) = v^{(1)}(a^*_2).
 \end{align*}
The latter inequality contradicts that $v^{(2)} \leq v^{(1)}$ on $\cI$, and therefore shows that $a^*_2\geq a^*_1$. Analogous arguments can be employed to obtain $b^*_1 \geq b^*_2$.
\end{proof}

\begin{prop}
\label{prop:CSc}
The optimal intervention boundaries $a^*$ and $b^*$ are such that
$c_1 \mapsto a^*(c_1)$ is decreasing, and $c_1 \mapsto b^*(c_1)$ is increasing. Also, $c_2 \mapsto a^*(c_2)$ is decreasing and $c_2 \mapsto b^*(c_2)$ is increasing.
\end{prop}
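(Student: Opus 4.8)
The plan is to read off all four statements from the Dynkin-game representation of $v'$ in Proposition \ref{prop:OS}. In the present mean-reverting setting this representation simplifies considerably: since $\mu(x)=\rho(m-x)$ one has $r-\mu'(\X_u)\equiv r+\rho$, and because $h'(x)=x-\theta$ and $c_1,c_2$ are constants, \eqref{gameOS}--\eqref{functionalOS} become
\begin{align*}
v'(x) = \inf_{\tau}\sup_{\sigma}\ \widehat{\E}_x\bigg[\int_0^{\tau\wedge\sigma}e^{-(r+\rho)s}(\X_s-\theta)\,ds - e^{-(r+\rho)\sigma}c_1\1_{\{\sigma<\tau\}} + e^{-(r+\rho)\tau}c_2\1_{\{\tau<\sigma\}}\bigg].
\end{align*}
I would also recall from Theorem \ref{teo:verify}, \eqref{u-candidate} and \eqref{A1A2} (together with the strict inequalities $v'>-c_1$ and $v'<c_2$ on $(a^*,b^*)$ established in \emph{Step 3} of the proof of Theorem \ref{uHJB}) that $v'\equiv -c_1$ on $(\underline x,a^*]$, $-c_1<v'<c_2$ on $(a^*,b^*)$, and $v'\equiv c_2$ on $[b^*,\overline x)$. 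Since $c_1+c_2>0$ by Assumption \ref{A2}, it follows that $-c_1\le v'\le c_2$ on all of $\cI$, hence $v'+c_1\ge 0$ and $v'-c_2\le 0$ on $\cI$, and therefore
$$a^* = \sup\{x\in\cI:\ v'(x)+c_1\le 0\}, \qquad b^* = \inf\{x\in\cI:\ v'(x)-c_2\ge 0\}.$$

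The second ingredient is a set of monotonicity properties of $v'$ and of its shifts, all obtained by letting monotonicity pass through $\sup_{\sigma}$ and then $\inf_{\tau}$ in the representation above. For fixed $(\tau,\sigma)$, $\widehat{\mathcal{J}}_x(\tau,\sigma)$ is nonincreasing in $c_1$ (through the term $-e^{-(r+\rho)\sigma}c_1\1_{\{\sigma<\tau\}}$) and nondecreasing in $c_2$; hence $c_1\mapsto v'(x;c_1)$ is nonincreasing and $c_2\mapsto v'(x;c_2)$ is nondecreasing, for every $x\in\cI$. Moreover, because $r+\rho>0$ we have $e^{-(r+\rho)\sigma}\1_{\{\sigma<\tau\}}\in[0,1]$, so that
\begin{align*}
\widehat{\mathcal{J}}_x(\tau,\sigma)+c_1 = \widehat{\E}_x\bigg[\int_0^{\tau\wedge\sigma}e^{-(r+\rho)s}(\X_s-\theta)\,ds + c_1\big(1-e^{-(r+\rho)\sigma}\1_{\{\sigma<\tau\}}\big) + e^{-(r+\rho)\tau}c_2\1_{\{\tau<\sigma\}}\bigg]
\end{align*}
is nondecreasing in $c_1$ for each $(\tau,\sigma)$, whence $c_1\mapsto v'(x;c_1)+c_1$ is nondecreasing for every $x\in\cI$; symmetrically, using $1-e^{-(r+\rho)\tau}\1_{\{\tau<\sigma\}}\in[0,1]$, the map $c_2\mapsto v'(x;c_2)-c_2$ is nonincreasing for every $x\in\cI$.

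Combining the two ingredients yields the claims. If $c_1\le c_1'$, then $v'(\cdot;c_1)+c_1\le v'(\cdot;c_1')+c_1'$ pointwise on $\cI$, so $\{v'(\cdot;c_1')+c_1'\le 0\}\subseteq\{v'(\cdot;c_1)+c_1\le 0\}$ and hence $a^*(c_1')\le a^*(c_1)$; also $v'(\cdot;c_1')\le v'(\cdot;c_1)$ gives $\{v'(\cdot;c_1')-c_2\ge 0\}\subseteq\{v'(\cdot;c_1)-c_2\ge 0\}$ and hence $b^*(c_1')\ge b^*(c_1)$. In the same way, if $c_2\le c_2'$, then $v'(\cdot;c_2')\ge v'(\cdot;c_2)$ forces $\{v'(\cdot;c_2')+c_1\le 0\}\subseteq\{v'(\cdot;c_2)+c_1\le 0\}$ and hence $a^*(c_2')\le a^*(c_2)$, while $v'(\cdot;c_2)-c_2\ge v'(\cdot;c_2')-c_2'$ forces $\{v'(\cdot;c_2')-c_2'\ge 0\}\subseteq\{v'(\cdot;c_2)-c_2\ge 0\}$ and hence $b^*(c_2')\ge b^*(c_2)$.

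I expect the only subtle point to be the shift used in the second ingredient: plain monotonicity of $v'$ in $c_1$ does not control $a^*$, because the level $-c_1$ against which $v'$ is compared moves together with $c_1$; the correct monotone object is $v'(\cdot;c_1)+c_1$, and its monotonicity in $c_1$ hinges on strict discounting, $r+\rho>0$, which makes $1-e^{-(r+\rho)\sigma}\1_{\{\sigma<\tau\}}$ nonnegative (and analogously for $b^*$ and $c_2$). Everything else reduces to routine bookkeeping with $\inf$/$\sup$ and the explicit shape of $v'$ recorded in \eqref{u-candidate}.
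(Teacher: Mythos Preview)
Your proof is correct and follows essentially the same approach as the paper: both use the Dynkin-game representation of $v'$ from Proposition~\ref{prop:OS}, derive the four monotonicity properties of $v'$, $v'+c_1$, and $v'-c_2$ in the cost parameters, and then read off the boundary monotonicity from the characterizations $a^*=\sup\{x:v'(x)+c_1\le 0\}$ and $b^*=\inf\{x:v'(x)-c_2\ge 0\}$. Your write-up is in fact more explicit than the paper's in justifying why $c_1\mapsto v'(x;c_1)+c_1$ is nondecreasing (via the observation that $1-e^{-(r+\rho)\sigma}\1_{\{\sigma<\tau\}}\ge 0$), whereas the paper simply asserts this.
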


\begin{proof}
From \eqref{gameOS} and \eqref{functionalOS} it is easy to see that
\begin{align*}
c_1 & \mapsto  v'(x;c_1) + c_1 \mbox{ is increasing}, \qquad  & c_2 \mapsto  v'(x;c_2) - c_2 \mbox{ is decreasing} \\
c_1 & \mapsto  v'(x;c_1)  \mbox{ is decreasing}, \qquad & c_2 \mapsto  v'(x;c_2) \mbox{ is increasing}.
\end{align*}
Take now $\overline{c} > c_1$. Then
\begin{align*}
a^*(\overline{c}) & =  \sup\{ x \in \R\ |\ v'(x;\overline{c}) + \overline{c} \leq 0\} \leq \sup\{ x \in \R\ |\ v'(x;c_1) + c_1 \leq 0\} = a^*(c_1),  \\
b^*(\overline{c}) & =  \inf\{ x \in \R\ |\ v'(x;\overline{c}) \geq c_2 \} \geq  \inf\{ x \in \R\ |\ v'(x;c_1) \geq c_2 \} = b^*(c_1).
\end{align*}
Hence $c_1 \mapsto a^*(c_1)$ is decreasing and $c_1 \mapsto b^*(c_1)$ is increasing.

Analogously, taking now $\overline{c} > c_2$ we have
\begin{align*}
a^*(\overline{c}) & =  \sup\{ x \in \R\ |\ v'(x;\overline{c}) \leq - c_1 \} \leq \sup\{ x \in \R\ |\ v'(x;c_2) \leq - c_1 \} = a^*(c_2),  \\
b^*(\overline{c}) & =  \inf\{ x \in \R\ |\ v'(x;\overline{c}) - \overline{c} \geq 0\} \geq  \inf\{ x \in \R\ |\ v'(x;c_2) - c_2 \geq 0 \} = b^*(c_2);
\end{align*}
i.e., $c_2 \mapsto a^*(c_2)$ is decreasing and $c_2 \mapsto b^*(c_2)$ is increasing.
\end{proof}

\begin{prop}
\label{prop:CStheta}
The optimal intervention boundaries $a^*$ and $b^*$ are such that $\theta \mapsto a^*(\theta)$ and $\theta \mapsto b^*(\theta)$ are increasing.
\end{prop}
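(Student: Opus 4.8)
The plan is to exploit the Dynkin-game representation \eqref{gameOS}--\eqref{functionalOS} of $v'(\,\cdot\,;\theta)$, proceeding exactly as in the proofs of Propositions \ref{prop:CSm} and \ref{prop:CSc}. The first point to record is that, in the present setting, the diffusion $\widehat X$ solving \eqref{hatOU}, the effective discount rate $r-\mu'(\,\cdot\,)\equiv r+\rho$, and the (constant) marginal costs $c_1,c_2$ are all independent of $\theta$; the parameter $\theta$ enters the functional $\widehat{\mathcal{J}}_x(\tau,\sigma)$ \emph{only} through the running cost $h'(\widehat X_s;\theta)=\widehat X_s-\theta$, which is strictly decreasing in $\theta$.

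Next, I would fix $x\in\cI$ and an arbitrary pair of $\widehat{\mathbb{F}}$-stopping times $(\tau,\sigma)$, and observe directly from \eqref{functionalOS} that for $\theta_1\le\theta_2$
\[
\widehat{\mathcal{J}}_x(\tau,\sigma;\theta_1)-\widehat{\mathcal{J}}_x(\tau,\sigma;\theta_2)=(\theta_2-\theta_1)\,\widehat{\E}_x\Big[\int_0^{\tau\wedge\sigma}e^{-(r+\rho)s}\,ds\Big]\ge 0,
\]
so that $\theta\mapsto\widehat{\mathcal{J}}_x(\tau,\sigma;\theta)$ is nonincreasing. Since this monotonicity holds uniformly in $(\tau,\sigma)$, taking first the supremum over $\sigma$ and then the infimum over $\tau$ preserves it, and therefore $\theta\mapsto v'(x;\theta)$ is nonincreasing for every $x\in\cI$, by \eqref{gameOS}.

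Finally, recalling (as in the proof of Proposition \ref{prop:CSm}) that $v'(\,\cdot\,;\theta)\in C^1(\cI)$, with $v'(\,\cdot\,;\theta)=-c_1$ on $(\underline x,a^*(\theta)]$ and $v'(\,\cdot\,;\theta)=c_2$ on $[b^*(\theta),\overline x)$, we have the representations $a^*(\theta)=\sup\{x\in\R:\,v'(x;\theta)\le -c_1\}$ and $b^*(\theta)=\inf\{x\in\R:\,v'(x;\theta)\ge c_2\}$. For $\theta_1\le\theta_2$, if $v'(x;\theta_1)\le -c_1$ then $v'(x;\theta_2)\le v'(x;\theta_1)\le -c_1$, so $\{v'(\,\cdot\,;\theta_1)\le -c_1\}\subseteq\{v'(\,\cdot\,;\theta_2)\le -c_1\}$ and hence $a^*(\theta_1)\le a^*(\theta_2)$; likewise $v'(x;\theta_2)\ge c_2$ forces $v'(x;\theta_1)\ge v'(x;\theta_2)\ge c_2$, so $\{v'(\,\cdot\,;\theta_2)\ge c_2\}\subseteq\{v'(\,\cdot\,;\theta_1)\ge c_2\}$ and thus $b^*(\theta_1)\le b^*(\theta_2)$. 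This gives the claimed monotonicity. I do not expect any genuine obstacle here; the only point needing (minor) care is the remark that $\widehat X$ and the discount factor $r+\rho$ do not depend on $\theta$, which is what allows the $\theta$-monotonicity of $\widehat{\mathcal{J}}_x(\tau,\sigma;\,\cdot\,)$ to survive the $\inf$--$\sup$ operations.
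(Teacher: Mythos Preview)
Your proposal is correct and follows essentially the same approach as the paper's own proof: both use the Dynkin-game representation \eqref{gameOS}--\eqref{functionalOS} together with the fact that $\theta\mapsto h'(x;\theta)$ is decreasing to deduce that $\theta\mapsto v'(x;\theta)$ is decreasing, and then conclude via the level-set representations $a^*(\theta)=\sup\{x:\,v'(x;\theta)\le -c_1\}$ and $b^*(\theta)=\inf\{x:\,v'(x;\theta)\ge c_2\}$. Your version is slightly more explicit in computing the difference $\widehat{\mathcal{J}}_x(\tau,\sigma;\theta_1)-\widehat{\mathcal{J}}_x(\tau,\sigma;\theta_2)$ and in spelling out the set inclusions, but the argument is the same.
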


\begin{proof}
We notice that $\theta \mapsto h'(x;\theta)$ is decreasing. It follows from \eqref{gameOS} that $\theta \mapsto v'(x;\theta)$ is decreasing as well, and therefore for all $\theta_2 > \theta_1$ we have
\begin{align*}
a^*(\theta_1) & =  \sup\{ x \in \R\ |\ v'(x;\theta_1) \leq - c_1 \} \leq \sup\{ x \in \R\ |\ v'(x;\theta_2) \leq - c_1 \} = a^*(\theta_2),  \\
b^*(\theta_1) & =  \inf\{ x \in \R\ |\ v'(x;\theta_1) \geq c_2 \} \leq  \inf\{ x \in \R\ |\ v'(x;\theta_2) \geq c_2 \} = b^*(\theta_2).
\end{align*}
Thus, $\theta \mapsto a^*(\theta)$ and $\theta \mapsto b^*(\theta)$ are both increasing, so that when the target level $\theta$ increases, the no-intervention region $(a^*,b^*)$ is displaced towards higher values.
\end{proof}

\begin{remark}
Notice that the previous monotonicity results can be easily generalized to the case of a more general diffusion. For example, assuming that a comparison principle \`a la Yamada-Watanabe (see, e.g., Proposition 5.2.18 in \cite{KS}) holds true for the diffusion \eqref{statehatX}, that the killing rate $r-\mu'(\,\cdot\,)$ is decreasing (i.e.\ $x \mapsto \mu(x)$ is convex), and that the holding cost function is convex, one can show that the boundaries are monotonically decreasing with respect to the drift coefficient. Also, arguing as in the proof of Theorem 6.1 in \cite{Matomaki}, one can prove the monotonicity of the boundaries with respect to a general state-dependent volatility coefficient. However, we decided to formulate the study of this section for an Ornstein-Uhlenbeck process because it is perhaps the simplest diffusion that captures the mean-reverting behavior of exchange rates empirically observed in some economies (see \cite{Sweeney, Tvedt, Yang}, and references therein). 
\end{remark}


\subsection{Expected Exit Time from the Target Zone}
\label{sec:exittime}

One of the great advantage of the Ornstein-Uhlenbeck model above is that many quantities about exit times and probabilities are known in closed form. We base our analysis on the results contained in \cite[Appendix B]{CadSarZap}. Recalling the optimally controlled process $X^{\star}$, define the exit time from $(a^*,b^*)$ as
$$ \tau_{(a^*,b^*)} := \inf\{ t > 0\ :\ X^{\star}_{t} \notin (a^*,b^*) \}, $$
and notice that $\P_x\{\tau_{(a^*,b^*)} < \infty\} = 1$ for all $x \in \R$. Indeed, if $x \notin (a^*,b^*)$ then clearly $\tau_{(a^*,b^*)} = 0$ $\P_x$-a.s. On the other hand, if $x \in (a^*,b^*)$ then the optimal control $\nu^{\star}$ is such that $\nu^{\star}_t \equiv 0$ for any $t \leq \tau_{(a^*,b^*)}$, and the (uncontrolled) Ornstein-Uhlenbeck process is positively recurrent. Also, we have that
\begin{eqnarray}
\P_x\{ X_{\tau_{(a^*,b^*)}} = a^* \} & = & \frac{ \displaystyle \int_x^{b^*} \exp\left( \rho \frac{(y - m)^2}{\sigma^2} \right)\ dy}{\displaystyle \int_{a^*}^{b^*} \exp\left( \rho \frac{(y - m)^2}{\sigma^2} \right)\ dy} = \frac{\displaystyle \int_{\frac{\sqrt{2\rho}}{\sigma}(x-m)}^{\frac{\sqrt{2\rho}}{\sigma}(b^*-m)} e^{\frac12 y^2} \ dy}{\displaystyle \int_{\frac{\sqrt{2\rho}}{\sigma}(a^*-m)}^{\frac{\sqrt{2\rho}}{\sigma}(b^*-m)} e^{\frac12 y^2} \ dy}, \label{Pa} \\ \nonumber \\
\P_x\{ X_{\tau_{(a^*,b^*)}} = b^* \} & = & \frac{\displaystyle \int_{a^*}^x \exp\left( \rho \frac{(y - m)^2}{\sigma^2} \right)\ dy}{\displaystyle \int_{a^*}^{b^*} \exp\left( \rho \frac{(y - m)^2}{\sigma^2} \right)\ dy} = \frac{\displaystyle \int_{\frac{\sqrt{2\rho}}{\sigma}(a^*-m)}^{\frac{\sqrt{2\rho}}{\sigma}(x-m)} e^{\frac12 y^2} \ dy}{\displaystyle \int_{\frac{\sqrt{2\rho}}{\sigma}(a^*-m)}^{\frac{\sqrt{2\rho}}{\sigma}(b^*-m)} e^{\frac12 y^2} \ dy}.\label{Pb}
\end{eqnarray}
Furthermore, we know that the function $q(x) := \E_x[\tau_{(a^*,b^*)}]$, $x \in (a^*,b^*)$, satisfies the boundary value differential problem 
$$ {\mathcal L}_X q + 1 = 0, \qquad q(a) = q(b) = 0, $$
whose solution is
\begin{equation} \label{exittime2}
q(x) = A_1 + B_1 \int_{\frac{\sqrt{2\rho}}{\sigma}(a^*-m)}^{\frac{\sqrt{2\rho}}{\sigma}(x-m)} e^{\frac12 w^2}\ dw - \frac{1}{\rho} \int_{\frac{\sqrt{2\rho}}{\sigma}(y-m)}^{\frac{\sqrt{2\rho}}{\sigma}(b^*-m)} e^{\frac12 w^2} \int_w^{\frac{\sqrt{2\rho}}{\sigma}(b^*-m)} e^{-\frac12 u^2}\ du \ dw, 
\end{equation}
with the constants $A_1$ and $B_1$ given by
$$ A_1 = \frac{1}{\rho} \int_{\frac{\sqrt{2\rho}}{\sigma}(a^*-m)}^{\frac{\sqrt{2\rho}}{\sigma}(b^*-m)} e^{\frac12 w^2} \int_w^{\frac{\sqrt{2\rho}}{\sigma}(b^*-m)} e^{-\frac12 u^2}\ du \ dw, \qquad B_1 = \frac{- A_1}{\displaystyle \int_{\frac{\sqrt{2\rho}}{\sigma}(a^*-m)}^{\frac{\sqrt{2\rho}}{\sigma}(b^*-m)} e^{\frac12 w^2}\ dw}. $$
Thanks to the previous results we can numerically compute the mean time until the exchange rate leaves the target zone, i.e.\ the mean time until the next central bank's intervention. This will be done in the next section. 


\subsection{Numerical results}

We now present a possible implementation of the previous model, tailored to mimick the DKK/EUR exchange rate. Since it seems that in 30 years there was no need to intervene from the Danish Central Bank, we can safely assume that the long-run mean corresponds to the logarithm of the central parity fixed to 7.46038 DKK/EUR. Remembering that the Ornstein-Uhlenbeck process in equation \eqref{OU} represents the logarithm of the exchange rate, we thus let $m = \theta = \log 7.46038 = 2.00961 = \simeq 2.01$; other plausible parameters for the Ornstein-Uhlenbeck dynamics could be $\rho = 0.001$ and $\sigma = 0.015$. Given the interest rates in the current economy, a plausible value for $r$ could be $r = 0.5\% = 0.005$. 
The values above are characteristic of the Danish and European economies, and still do not reflect the Danish Central Bank's policy, which is instead implemented in the three parameters $\theta$, $c_1$ and $c_2$. We collect the parameters up to now in Table \ref{table-OU}.

\begin{table}[h]
	\centering
	\begin{tabular}   {| l | l | l | l | l | 
	}
		\hline
		$r$ & $\sigma$ & $\rho$ & $\theta$ & $m$ 
		\\ \hline
		0.005 & 0.015 & 0.001 & 2.01 & 2.01 
		\\
		\hline
	\end{tabular}
	\vspace{0.25cm}
	\caption{Parameters' values for the numerical example.}
	\label{table-OU}
\end{table} 

In order to find the known intervention thresholds of $\pm 2.25\%$ from the central parity, we must implement the following 
inverse problem: find $c_{1}, c_2$ such that, with the parameters above, the optimal $a^*$ and $b^*$ are 
$$ a^* = \log  7.46038(1 - 0.0225) = 1.98685
, \qquad b^* = \log 7.46038 (1 + 0.0225) =  2.03186
$$

Given the (approximate) symmetry of our problem\footnote{since $\log (1 + 0.0225) = 0.02225 \simeq 0.0225 \simeq - 0.02276 = - \log (1 - 0.0225)$}, we search for $c_{1}$ and $c_2$ such that $c_1 = c_2 =: c$. From the monotonicity result of Proposition \ref{prop:CSc} we know that, by increasing (decreasing) the common proportional cost $c$, the continuation region $(a^*,b^*)$ will enlarge (shrink): this is a positive sign that our inverse problem can have a unique solution. 

With this in mind, we search for $c = c_1 = c_2$ such that $[a^*,b^*] \simeq [m - 0.0225, m + 0.0225]$. We start checking for $c = 1$, and we continue by decreasing the value of $c$ until we find our zone: the results are reported in Table \ref{table-c}.

\begin{table}[h]
\centering
\begin{tabular}{l|llll}
$c$			& $a^*$		& $b^*$		& $a^* - m$	& $b^* - m$\\
\hline
1			& 1.93729	& 2.08193	& $-$0.07232	& 0.07232\\
0.5			& 1.95302	& 2.0662	& $-$0.0565905 	& 0.0565905\\
0.1			& 1.97703	& 2.04218	& $-$0.0325786	& 0.0325786\\
0.05			& 1.98383	& 2.03539	& $-$0.0257803	& 0.0257803\\
0.04			& 1.98569	& 2.03352	& $-$0.0239155	& 0.0239155\\
0.035		& 1.98674	& 2.03247	& $-$0.0228658	& 0.0228658\\
0.034		& 1.98696	& 2.03225	& $-$0.0226442	& 0.0226442\\
{\bf 0.0335}		& {\bf 1.98707}	& {\bf 2.03214}	& {\bf $-$0.0225317}	& {\bf 0.0225317}\\
0.033		& 1.98719	& 2.03202	& $-$0.0224182	& 0.0224182\\
0.03			& 1.98789	& 2.03132	& $-$0.021712	& 0.021712\\
\end{tabular}
	\vspace{0.25cm}
	\caption{Optimal continuation regions (i.e., target zones) for various fixed costs $c = c_1 = c_2$.}
	\label{table-c}
\end{table}

Hence, given the parameters' values of Table \ref{table-OU}, if we let $c_1 = c_2 = 0.0335$, we find that the optimal $a^*$ and $b^*$ well approximate the boundaries of the target zone that the Central Bank of Denmark is adopting since January 12, 1987 \cite{Danish}. 

By using the results in Section \ref{sec:exittime}, we can also compute the expected exit time of the exchange rate from the target zone. In fact, by taking $(a^*,b^*) = (1.98707,2.03214)$, equation \eqref{exittime2} can be plotted as a function of initial (log-)exchange rate $x$. The plot is in Figure \ref{fig:exittime} below.

\begin{figure}[h!]
\centering
\includegraphics[scale=0.8]{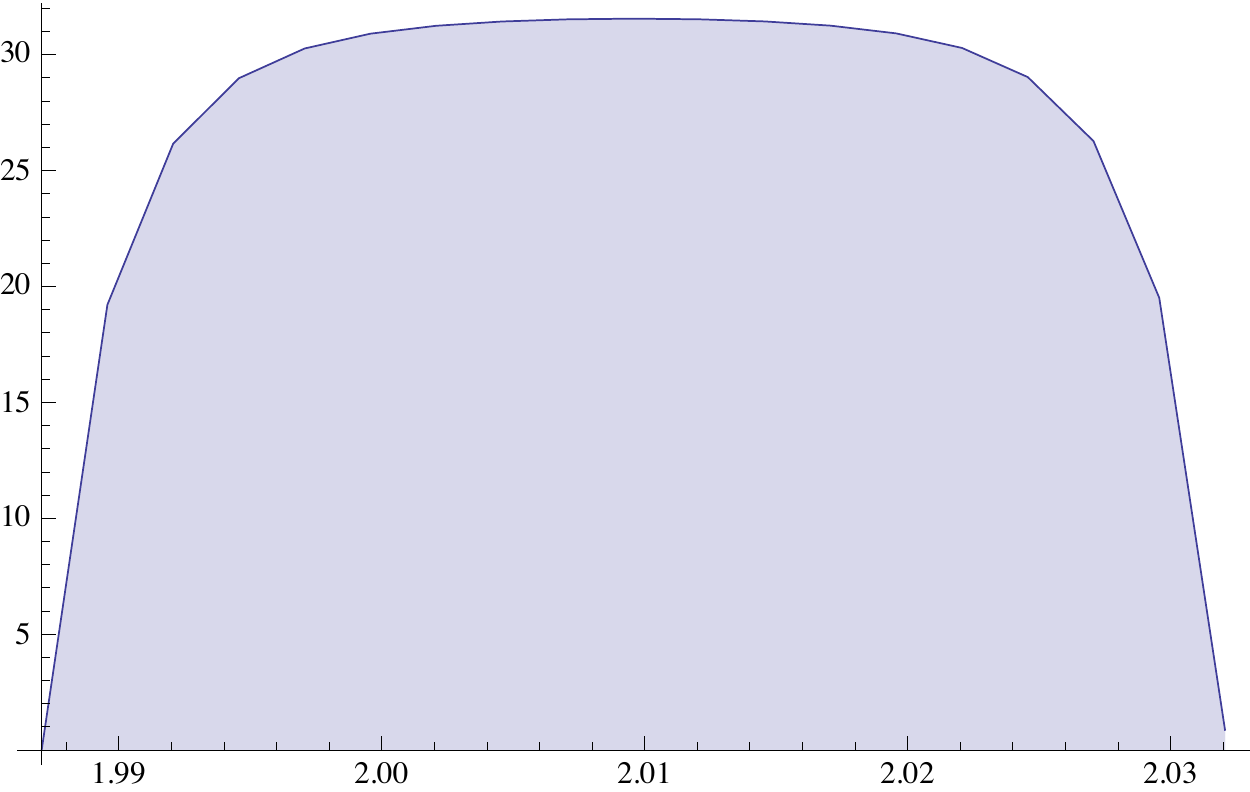}
\caption{Average exit time from the target zone (in years), as a function of the initial value $x$ of (the logarithm of) the exchange rate.}
\label{fig:exittime}
\end{figure}

We can see that the maximal expected time is obtained (as expected) when the deviation from central parity is null, i.e., for $x = \log  7.46038 \simeq 2.01$, and decreases as the exchange rate nears the target zone's boundaries. This maximum expected time is around 31.11 years, which is also the expected time before an intervention by the central bank is triggered. This finding is perfectly in line with the observed phenomenon that the Danish Central Bank did not need to intervene to keep the DKK/EUR exchange rate within the target zone since the last 30 years \cite{Danish}.

\medskip

We now try to reproduce, on this set of parameters, the ``pegging" phenomenon which was observed in the CHF/EUR exchange rate in the period 2011--2015 (see Figure \ref{EURCHF} and \cite{NYT,Economist}). The economic intuition behind it is that the central bank would intervene in pegging the rate above (or below) a certain threshold, even if the rates' uncontrolled dynamics would push it beyond that threshold. This can be easily implemented in the present framework, by simply changing $\theta$ to be a value different from $m$. Due to the monotonicity results of Proposition \ref{prop:CStheta}, this would modify the width of the continuation region $(a^*,b^*)$.  If for example, in the previous framework, we let $\theta = m + \delta$ for $\delta\geq 0$, then we expect that $a^*$ and $b^*$ are increased. In fact, by keeping all the other parameters fixed, we find the results in Table \ref{table-delta}.

\begin{table}[h]
\centering
\begin{tabular}{l|llll}
$\delta = \theta - m$	& $a^*$		& $b^*$		& $a^* - m$		& $b^* - m$\\
\hline
0						& 1.98707	& 2.03214	& $-$0.02253		& 0.0225317\\
0.01					& 1.99709	& 2.04215	& $-$0.01251	& 0.0325466\\
0.02						& 2.0071		& 2.05217	& $-$0.00250 	& 0.0425615\\

0.03						& 2.01712	& 2.06218	& \ \ 0.00751		& 0.0525764\\
\end{tabular}
	\vspace{0.25cm}
	\caption{Optimal continuation regions (i.e., target zones) for various deviations of the (logarithm of the) central parity $\theta$ from the long-run mean $m$ of the uncontrolled exchange rate.}
	\label{table-delta}
\end{table}

We can observe that, by increasing $\theta$, also the target zone $(a^*,b^*)$ increases. In particular, it suffices to increase $\theta$ by $\delta = 0.03$ (remembering that $m \simeq 2.01$, this would amount to less than 2\% of relative increase of the exchange rate) to make that the long-term mean of the exchange rate is {\em outside} of the target zone (in fact, in this case we have $m = 2.01 < a^* = 2.01712	< b^* = 2.06218$). Even in the less severe scenario of $\delta = 0.02$, we would have that $m$ is still inside the target zone, but very near to the lower threshold. By using again the results in Section \ref{sec:exittime}, we can estimate the expected exit time of the exchange rate from the target zone also in this case, and plot the results in Figure \ref{fig:exitnear}.

\begin{figure}[h!]
\centering
\includegraphics[scale=0.8]{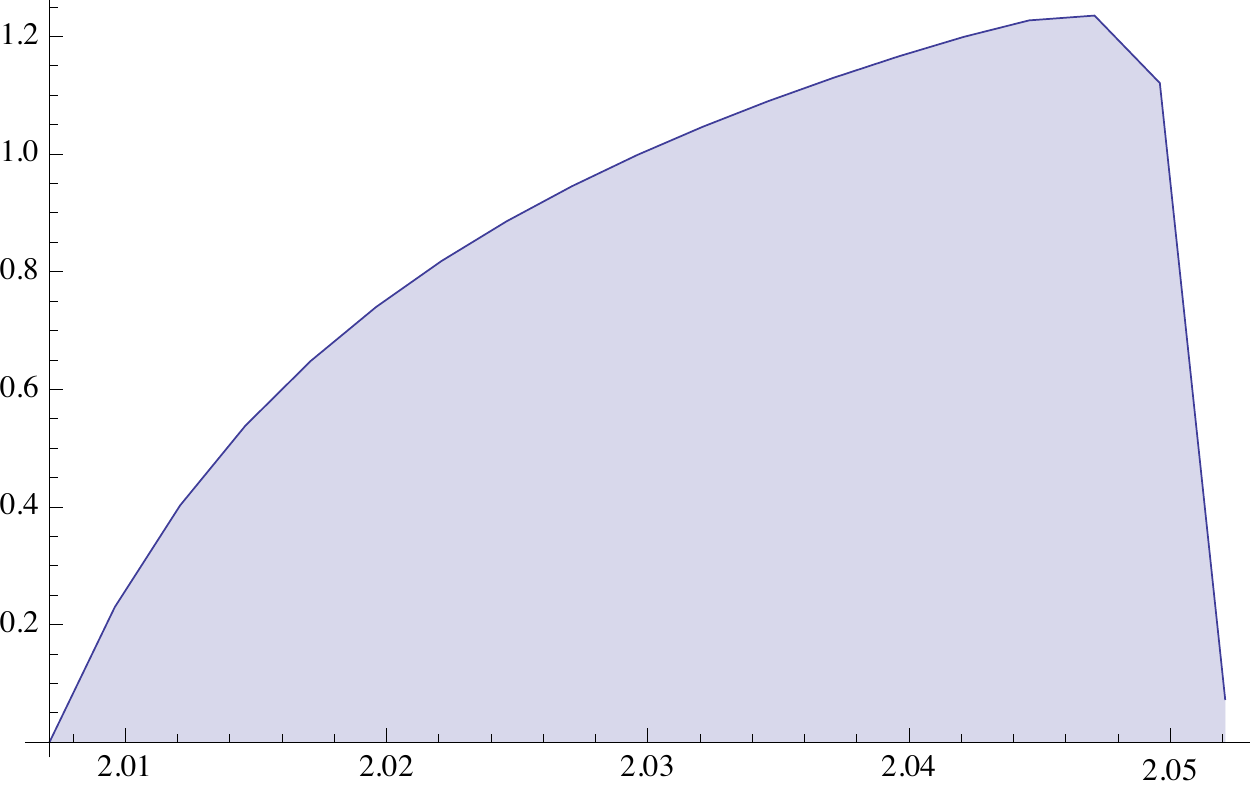}
\caption{Average exit time from the target zone (in years), as a function of the initial value $x$ of (the logarithm of) the exchange rate when $\theta - m = 0.02$.}
\label{fig:exitnear}
\end{figure}

As expected, here we notice a sharp asymmetry: the long-term mean is still $m \simeq 2.01$, but the target zone is now not symmetric around it. As a consequence, if the (log-)exchange rate starts from $x = m$, the expected exit time from the target zone (i.e.\ before the central bank is forced to intervene) is $q(2.01) = 0.23$, i.e.\ about 3 months (instead of the 30+ years of the previous case). However, if we start from $x > 2.01$, then the process will revert with high probability towards its mean $m$, taking some time in doing that, and then it will spend about other 3 months before hitting one of the boundaries of the target zone. Such an expected time to return to the long-term mean (and thus the expected exit time from the target zone) is an increasing function of the initial level $x$ of the (log-)exchange rate for any $x \leq x_o \simeq 2.048$. Letting the exchange rate process start from a value $x \geq x_o$, it then becomes more probable to exit the target zone from $b^*$, and the expected exit time starts to decrease. We can also see that at such a critical level we have $q(2.048) \simeq 1.26$; i.e.\ starting from $x_o$ the expected exit time is maximal, and it is about 1 year and 3 months. 

In Figure \ref{exitprob}, we draw the exit probability from $a^*$ as a function of initial state $x$. We can see that the probability of hitting the ``peg" $a^*$ is essentially equal to 1 for any initial value of the (log-)exchange rate $x < 2.045$. For higher values such a probability then starts to decrease, up to a critical point near to $x_o$ (actually, slightly above 2.05), where it becomes more probable to leave the target zone from $b^*$ than from $a^*$.

\begin{figure}[h!]
\centering
\includegraphics[scale=0.8]{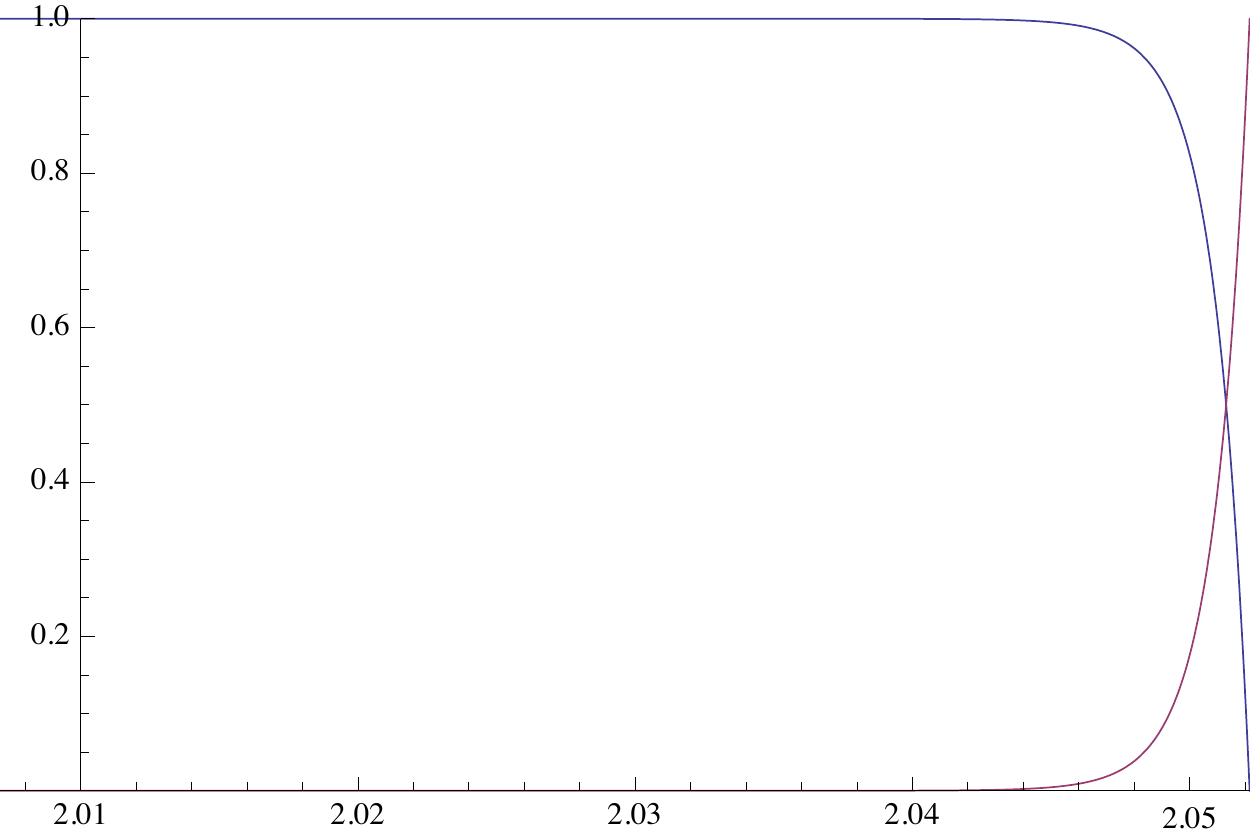}
\caption{Probabilities of exiting the target zone from $a^*$ (in blue) and from $b^*$ (in red), as functions of the initial value $x$ of (the logarithm of) the exchange rate when $\theta - m = 0.02$.}
\label{exitprob}
\end{figure}


\section{Concluding Remarks}

In this paper we have studied the optimal management problem of exchange rates faced by a central bank. We have formulated it as an infinite time-horizon singular stochastic control problem for a one-dimensional diffusion that is linearly controlled through a process of bounded variation. We have provided the explicit expression of the value function, as well as the complete characterization of the optimal control. At each instant of time, the optimally controlled exchange rate is kept within an optimal band (continuation region), whose boundaries (the so-called free boundaries) are endogenously determined as part of the solution to the problem. 

A detailed comparative statics analyisis of the free boundaries is provided when the (log-)exchange rate (in absence of any intervention) evolves as an Ornstein-Uhlenbeck process. This dynamics captures the mean-reverting behavior of exchange rates that has been observed in several empirical studies (see \cite{Sweeney, Tvedt} and references therein). Moreover, it allows the central bank to have aims, both in its cost function $h$ as well as in its intervention costs $c_i$, which possibly contrast with this foreign exchange dynamics. This does not happen if, for example, the minimum of $h$ is very near to the long-term mean of the exchange dynamics: in this case, the exchange rate stays naturally with a high probability in the continuation region. This fact can be interpreted as the ``target zone" introduced in \cite{Krugman}, and it applies, for example, to the Danish and Hong Kong currencies \cite{Danish,Wiki}. Instead, if the rate's long-term mean is far from the minimum of $h$, or worse even outside the continuation region, then it is very probable that the exchange rate hits one boundary of the continuation region much more often than the other one. This phenomenon is usually referred to as ``pegging" the exchange rate above or below a given threshold, and it has been observed in the period 2011--2015 in the dynamics of the Swiss Franc versus the Euro \cite{NYT,Economist}.

Several comments deserve to be made on our model, and on its possible extensions. First of all, it is worth noting that, given its generality, the control problem studied in this paper might be a reasonable model also in other context, as, e.g., for problems of partially reversible capacity expansion (see \cite{DeAF2013}, \cite{GP}, among others), for the optimal management of an inventory (see \cite{HT} for an early work), for the automotive cruise control of an aircraft under an uncertain wind condition \cite{CMR}, or for the optimal management of stabilization funds \cite{Aguilar}. Second of all, there are several possible directions towards our study on exchange rates' control can be extended. In particular, it would be extremely interesting to develop a mathematical model taking into account the strategic interaction between two (or more) central banks for the management of the exchange rates (see also \cite{aidetal} for a recent contribution in this direction). This would lead to a challenging nonzero-sum stochastic game with singular controls that we leave for future research.


\section*{Acknowledgments}
\noindent Financial support by the German Research Foundation (DFG) through the Collaborative Research Centre 1283 ``Taming uncertainty and profiting from randomness and low regularity in analysis, stochastics and their applications'' is gratefully acknowledged by the first author. Part of this work has been done while the first author was visiting the Department of Mathematics of the University of Padova thanks to the funding provided by the ``ACRI Young Investigator Training Program'' (YITP-QFW2017). 


\appendix
\label{sec:app}

\section{}
\renewcommand{\theequation}{A-\arabic{equation}}

\begin{lemma}
\label{lem:AM}
Under Assumption \ref{ass:psiprimephiprime} one has that $\psi'=\widehat{\psi}$ and $-\phi'=\widehat{\phi}$, where $\widehat{\psi}$ and $\widehat{\phi}$ are the strictly increasing and strictly decreasing fundamental solutions of the ODE $(\cL_{\X} - (r-\mu'))f =0$ for $\X$ killed at rate $r-\mu'(\,\cdot\,)$.
\end{lemma}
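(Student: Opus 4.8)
The plan is to proceed in three steps: first verify that $\psi'$ and $\phi'$ solve the ODE \eqref{ODE2}; then show that $\psi'$ is a positive multiple of $\widehat{\psi}$ and $-\phi'$ a positive multiple of $\widehat{\phi}$; and finally adopt the (free) normalization of the increasing and decreasing fundamental solutions of \eqref{ODE2} so that the stated equalities hold. For the first step, observe that under Assumption \ref{A1} the coefficients $\mu,\sigma$ are $C^1$, so $\psi,\phi$ are smooth on $\cI$, and differentiating $\cL_X u - ru =0$ once while using $(\sigma^2)'=2\sigma\sigma'$ gives, for $g=\psi'$ and for $g=\phi'$,
$$\tfrac{1}{2}\sigma^2(x)g''(x)+\big(\mu(x)+\sigma(x)\sigma'(x)\big)g'(x)-\big(r-\mu'(x)\big)g(x)=0,\qquad x\in\cI,$$
that is, $\psi'$ and $\phi'$ both solve \eqref{ODE2}. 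Consequently $\psi'=c_1\widehat{\psi}+c_2\widehat{\phi}$ and $\phi'=d_1\widehat{\psi}+d_2\widehat{\phi}$ on $\cI$ for suitable constants, and it remains to show $c_2=0$ and $d_1=0$.

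For the second step I would invoke the resolvent identity \eqref{relation-resolvents}, $(Rf)'=(\widehat{R}f')$, which is established in this appendix through the Girsanov change of measure of Remark \ref{rem:Girsa}, independently of the present lemma (so no circularity arises). Pick $f\in C^{\infty}_{c}(\cI)$ with $f\ge 0$, $f\not\equiv 0$ and $\mathrm{supp}\,f=[\alpha,\beta]$, $\underline x<\alpha<\beta<\overline x$, and for which $Rf$ and $\widehat{R}f'$ are well defined. Using the representation \eqref{resolventrepr} and the explicit form \eqref{Green} of $G$, for $x<\alpha$ one has $(Rf)(x)=W^{-1}\psi(x)\int_{\cI}\phi(y)f(y)m'(y)\,dy$, hence $(Rf)'(x)=\big(W^{-1}\int_{\cI}\phi f\,m'\big)\,\psi'(x)$; similarly \eqref{resolventhatrepr} and \eqref{Greenhat} give $(\widehat{R}f')(x)=w^{-1}\widehat{\psi}(x)\int_{\cI}\widehat{\phi}(y)f'(y)\widehat{m}'(y)\,dy$ for $x<\alpha$. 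Since $\int_{\cI}\phi f\,m'>0$, equating these two via \eqref{relation-resolvents} yields $\psi'=\kappa\,\widehat{\psi}$ on $(\underline x,\alpha)$ with $\kappa$ the ratio of the two constants involved; choosing $f$ so that $\int_{\cI}\widehat{\phi}f'\widehat{m}'\neq0$ — possible unless $\widehat{\phi}\widehat{m}'$ is constant, a degeneracy that is easily excluded — makes $\kappa\neq0$, and then $\psi'-\kappa\widehat{\psi}$ is a solution of \eqref{ODE2} vanishing on a nonempty open interval, hence identically zero; so $\psi'=\kappa\widehat{\psi}$ on all of $\cI$, and $\kappa>0$ because $\widehat{\psi}>0$ while $\psi'\ge0$ (as $\psi$ is increasing) and $\psi'\not\equiv0$. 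In particular $c_2=0$. Running the same computation with $x>\beta$ in place of $x<\alpha$ gives $(Rf)'(x)=\big(W^{-1}\int_{\cI}\psi f\,m'\big)\phi'(x)$ and $(\widehat{R}f')(x)=\big(w^{-1}\int_{\cI}\widehat{\psi}f'\widehat{m}'\big)\widehat{\phi}(x)$, whence $\phi'=\kappa'\widehat{\phi}$ on $\cI$ with $\kappa'<0$ since $\phi'<0<\widehat{\phi}$; i.e.\ $-\phi'$ is a positive multiple of $\widehat{\phi}$.

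Since the increasing and decreasing fundamental solutions of \eqref{ODE2} are determined only up to a positive multiplicative constant, one then simply normalizes $\widehat{\psi}:=\psi'$ and $\widehat{\phi}:=-\phi'$. With this choice $\psi'$ is strictly increasing and $-\phi'$ strictly decreasing, and the boundary behaviour \eqref{psiphiproperties1bis}--\eqref{psiphiproperties2bis} is respected: the limits at the ``near'' boundaries and the scale-derivative limits are inherited (after rescaling) from those of any fundamental solution of \eqref{ODE2}, which exist by the naturality of $\underline x,\overline x$, while $\lim_{x\uparrow\overline x}\widehat{\psi}(x)=\lim_{x\uparrow\overline x}\psi'(x)=+\infty$ and $\lim_{x\downarrow\underline x}\widehat{\phi}(x)=-\lim_{x\downarrow\underline x}\phi'(x)=+\infty$ are precisely the content of Assumption \ref{ass:psiprimephiprime}; this is also the point at which the elementary argument sketched in the second part of the proof of Lemma 4.3 in \cite{AlvarezMatomaki} makes use of that assumption.

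The main obstacle is the second step, i.e.\ ruling out a $\widehat{\phi}$-component of $\psi'$ (and a $\widehat{\psi}$-component of $\phi'$): a direct comparison of asymptotics at $\underline x$ and $\overline x$ is delicate, because the behaviour of $\psi'$ at $\underline x$ and of $\phi'$ at $\overline x$ is not controlled a priori, whereas passing through the resolvent identity \eqref{relation-resolvents} localizes the comparison to a one-sided neighbourhood of a boundary on which each resolvent is forced by the Green-function structure to be proportional to a single fundamental solution. A minor technical point is checking that $Rf$ and $\widehat{R}f'$ are well defined for the chosen test function: this is immediate for $Rf$, and for $\widehat{R}f'$ it follows from Assumption \ref{ass:rate} together with the compact support of $f'$, consistently with the standing integrability conventions already used, e.g., in \eqref{repr-c}.
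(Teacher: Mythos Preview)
Your proof is correct and takes a genuinely different route from the paper's. Both arguments begin by differentiating $\cL_X u - ru=0$ to see that $\psi'$ and $\phi'$ solve \eqref{ODE2}; the divergence is in how one identifies $\psi'$ with the \emph{increasing} fundamental solution $\widehat{\psi}$ (and $-\phi'$ with $\widehat{\phi}$). The paper proceeds probabilistically: it invokes Corollary~1 of \cite{Alvarez03} to obtain strict convexity of $\psi$ and $\phi$ (hence monotonicity, and boundedness near the appropriate endpoints, of $\psi'$ and $\phi'$), writes the Laplace functional $\widehat{\E}_x[e^{-\int_0^{\widehat\tau}(r-\mu')}]$ on a finite interval $(\ell_1,\ell_2)$ in terms of $\psi',\phi'$, and then lets $\ell_1\downarrow\underline x$, $\ell_2\uparrow\overline x$; Assumption~\ref{ass:psiprimephiprime} is what forces the cross-ratios $\psi'(\ell_1)/\phi'(\ell_1)$ and $\phi'(\ell_2)/\psi'(\ell_2)$ to vanish in the limit, pinning down $\psi'$ and $-\phi'$ as the minimal solutions. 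Your approach instead leverages the resolvent identity \eqref{relation-resolvents} (proved independently via Girsanov) together with the explicit Green-function factorization on one side of a compactly supported test function, which forces $\psi'\propto\widehat{\psi}$ on an open interval and hence globally by ODE uniqueness. This is slicker and avoids both the appeal to \cite{Alvarez03} and the limiting procedure; it also makes clear that, once the boundaries are natural for $\widehat X$, Assumption~\ref{ass:psiprimephiprime} is in fact a \emph{consequence} rather than an input (your closing paragraph could state this more sharply). Two minor clean-ups: the ``degeneracy'' $\int\widehat{\phi}f'\widehat m'=0$ you hedge against cannot occur, since it would force $\psi'\equiv 0$ on a nonempty interval and hence $\psi\equiv 0$ by the ODE; and the positivity $\kappa>0$ follows immediately from $\psi$ strictly increasing together with $\widehat{\psi}>0$, with no further choice of $f$ needed.
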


\begin{proof}
We simply repeat the arguments in the second part of the proof of Lemma 4.3 in \cite{AlvarezMatomaki} (see also Theorem 9 in \cite{Alvarez01}). Under Assumption \ref{A1} standard differentiation reveals that $\psi'$ and $\phi'$ solve the ODE 
\begin{equation}
\label{ODE-AM}
(\cL_{\X} - (r-\mu'))f =0.
\end{equation}
Also, for any $x\in \cI$ one has $\phi''(x)\psi'(x) - \phi'(x)\psi''(x)=2rW\S'(x)\neq 0$, and so any solution $f$ to the previous ODE has to be of the form $f(x)= c_1 \psi'(x) + c_2\phi'(x)$. Furthermore, note that under Assumption \ref{A1} and \ref{ass:rate}, Corollary 1 of \cite{Alvarez03} can be applied yielding that $\phi$ and $\psi$ are strictly convex.

We thus find that for all $\ell_1 < \ell_2$ and for all $x \in (\ell_1,\ell_2) \subset \mathcal{I}$ we can write
$$\widehat{\E}_x\Big[e^{-\int_0^{\widehat{\tau}}(r-\mu'(\X_s))ds}\Big] = \frac{f_1(x)}{f_1(\ell_1)} + \frac{f_2(x)}{f_2(\ell_2)},$$
where $\widehat{\tau}:= \inf\{t\geq 0: \X_t \notin (\ell_1,\ell_2)\}$, $\widehat{\P}_x$-a.s., and $f_1(x):=\frac{\phi'(\ell_2)}{\psi'(\ell_2)}\psi'(x) - \phi'(x)$ and $f_2(x):=\psi'(x) - \frac{\psi'(\ell_1)}{\phi'(\ell_1)}\phi'(x)$ are the fundamental solutions of \eqref{ODE-AM} when $\X$ is killed at $\ell_1$ and $\ell_2$.

Noticing that $\lim_{\ell_1 \downarrow \underline{x}}\psi'(\ell_1)/\phi'(\ell_1) = 0$ and $\lim_{\ell_2 \uparrow \overline{x}}\phi'(\ell_2)/\psi'(\ell_2) = 0$ by the required boundary behavior of $X$, Assumption \ref{ass:psiprimephiprime} implies that
$$\lim_{\ell_1 \downarrow \underline{x}}\widehat{\E}_x\Big[e^{-\int_0^{\widehat{\tau}}(r-\mu'(\X_s))ds}\Big] = \frac{\psi'(x)}{\psi'(\ell_2)},$$
and
$$\lim_{\ell_2 \uparrow \overline{x}}\widehat{\E}_x\Big[e^{-\int_0^{\widehat{\tau}}(r-\mu'(\X_s))ds}\Big] = \frac{\phi'(x)}{\phi'(\ell_1)}.$$
Hence, $\psi'$ and $-\phi'$ are the fundamental solutions of \eqref{ODE-AM} for $\X$ killed at rate $r-\mu'(\,\cdot\,)$. That is, $\psi'=\widehat{\psi}$ and $-\phi'=\widehat{\phi}$.
\end{proof}
\vspace{0.5cm}

\emph{Proof of Equation \eqref{relation-resolvents}}
\vspace{0.3cm}

Assumption \ref{A1} guarantees that the flow $x \mapsto X^{x;0,0}_t$ is a.s.\ continuous, increasing and differentiable for any $t\geq0$ (see, e.g., \cite{Protter}, Ch.\ V.7). Defining the process $Y$ such that $Y_t=\partial X^{x;0,0}_t/\partial x$, $t\geq 0$, by ordinary differentiation we find that $Y$ satisfies
$$dY_t = \mu'(X^{x;0,0}_t)Y_t dt + \sigma'(X^{x;0,0}_t)Y_t dB_t, \qquad Y_0=1,$$
and therefore that
$$ \displaystyle Y_t = e^{\int_0^t \mu'(X^{x;0,0}_s)ds}Z_t, $$
where the exponential martingale $(Z_t)_{t\geq 0}$ has been defined in equation \eqref{zeta}.

As in Remark \ref{rem:Girsa}, consider the dynamics of $X^{0,0}$ under the measure $\mathbb{P}_x$, and the dynamics of $\X$ under the measure $\widehat{\ppp}_x$. Define also a new measure $\mathbb{Q}_x$ through the Radon-Nikodym derivative $Z_t:=\frac{d\mathbb{Q}_x}{d\ppp_x}|_{\mathcal{F}_t}$\, and notice that the Girsanov theorem implies that the process 
$$\widehat{B}_t:=B_t-\int_0^t\sigma'(X^{0,0}_s)ds $$
is a standard Brownian motion under $\mathbb{Q}_x$.

Take now $f \in C^1(\cI)$, and such that $Rf$ and $\widehat{R}f'$ are well defined. Then, by differentiating \eqref{resolvent} we obtain
\begin{align*}
(Rf)'(x) & = \E_x\bigg[\int_0^{\infty} e^{- r t}\, Y_t\,f'(X^{0,0}_t) dt \bigg] \nonumber \\
& = \E^{\mathbb{Q}_x}\bigg[\int_0^{\infty} e^{-\int_0^t (r-\mu'(X^{0,0}_s)) ds} \,f'(X^{0,0}_t) dt \bigg].
\end{align*}
\noindent We therefore conclude by observing that $\text{Law}\,(X^{0,0}\big|\mathbb{Q}_x)=\text{Law}\,(\X\big|\widehat{\ppp}_x)$ and recalling \eqref{resolventhat}.


\end{document}